\title[{}]{ Center of affine $\sll_{2|1}$ at the critical level}
\author{Dražen Adamović}
\address[D.A.]{Faculty of Science, Department of Mathematics, University of Zagreb, Bijenicka 30, 10 000 Zagreb, Croatia}
\email{adamovic@math.hr}
\author{Shigenori Nakatsuka}
\address[S.N.]{Department Mathematik, FAU Erlangen–Nürnberg, Cauerstraße 11, 91058, Erlangen, Germany}
\email{shigenori.nakatsuka@fau.de}
\definecolor{rouge}{rgb}{0.85,0.1,.4}
\definecolor{bleu}{rgb}{0.1,0.2,0.9}
\definecolor{violet}{rgb}{0.7,0,0.8}
\tikzset{%
  symbol/.style={
    draw=none,
    every to/.append style={
      edge node={node [sloped, allow upside down, auto=false]{$#1$}}
    },
  },
}
\newtheorem{definition}{Definition}[section]
\newtheorem{proposition}[definition]{Proposition}
\newtheorem{theorem}[definition]{Theorem}
\newtheorem{corollary}[definition]{Corollary}
\newtheorem{lemma}[definition]{Lemma}
\newtheorem{remark}[definition]{Remark}
\newtheorem{conjecture}[definition]{Conjecture}
\newtheorem*{conj}{Conjecture}
\newtheorem*{thm}{Main Theorem}
\numberwithin{equation}{section}
\newcommand{\Z}{\mathbb{Z}}
\newcommand{\C}{\mathbb{C}}
\newcommand{\End}{\operatorname{End}}
\newcommand{\Span}{\operatorname{Span}}
\newcommand{\Com}{\operatorname{Com}}
\newcommand{\Ker}{\operatorname{Ker}}
\renewcommand{\Im}{\operatorname{Im}}
\newcommand{\str}{\operatorname{str}}
\newcommand{\pd}{\partial}
\newcommand{\gr}{\operatorname{gr}}
\newcommand{\ad}{\operatorname{ad}}
\newcommand{\wt}{\operatorname{wt}}
\newcommand{\wun}{\mathbf{1}}
\newcommand{\kk}{\kappa}
\newcommand{\aff}{\mathrm{aff}}
\renewcommand{\geq}{\geqslant}
\renewcommand{\leq}{\leqslant}
\newcommand{\hwt}[1]{\mathrm{e}^{{#1}}}
\newcommand{\g}{\mathfrak{g}}
\newcommand{\h}{\mathfrak{h}}
\newcommand{\gl}{\mathfrak{gl}}
\newcommand{\sll}{\mathfrak{sl}}
\newcommand{\kw}[1]{\mathbf{#1}}
\newcommand{\gh}{\varphi}
\newcommand{\lm}[2]{[#1 {}_\lambda #2]}
\newcommand{\cent}{\mathfrak{z}}
\newcommand{\nil}{\mathfrak{n}}
\newcommand{\poch}[1]{\left(#1; q \right)_\infty}
\newcommand{\bea}{\begin{eqnarray}}
\newcommand{\eea}{\end{eqnarray}}
\newcommand{\W}{\mathcal{W}}
\newcommand\doi[2]{\href{http://dx.doi.org/#1}{#2}}
\begin{document}
\maketitle

\begin{abstract} In this article, we shall describe the center of the universal affine vertex superalgebra $V^{\kk_c}(\g)$ associated with $\g=\sll_{2|1}, \gl_{2|1}$ at the critical level $\kappa_c$ and prove the conjecture of  A. Molev and E. Ragoucy  \cite{MR} in this case.

The center $\cent(V^{\kk_c}(\sll_{2|1}))$ turns out to be isomorphic to the large level limit $\ell \rightarrow \infty$ of a vertex subalgebra, called the parafermion vertex algebra $K^{\ell} (\sll_2)$, of the affine vertex algebra $V^\ell(\sll_2)$. 
The key ingredient of the proof is to understand the principal $\W$-superalgebra $\W^{\kk_c}(\sll_{2|1})$ at the critical level. It relates the center $\cent(V^{\kk_c}(\sll_{2|1}))$ to $V^\infty(\sll_2)$ via the Kazama--Suzuki duality while it has a surprising coincidence with $V^{\kk_c}(\gl_{1|1})$, whose center has been recently described. Moreover, the centers $\cent(V^{\kk_c}(\sll_{2|1}))$ and $\cent(\W^{\kk_c}(\sll_{2|1}))$ are proven to coincide as a byproduct. 

A  general conjecture is proposed which describes the center $\cent(V^{\kk_c}(\sll_{n|m}))$ with $n>m$ as a large level limit of ``the dual side'', i.e.,\ the parafermion-type subalgebras of $\W$-algebras $\W^\ell(\sll_{n}, \mathbb{O}_{[n-m,1^m]})$ associated with hook-type partitions $[n-m,1^m]$, known also as vertex algebras at the corner.
\end{abstract}

\section{Introduction}\label{Intro}

Let $V^k(\g)$ be the universal affine vertex superalgebra associated with a basic-classical simple Lie superalgebra $\g$ and a complex number $k$, called a level. 
The center $ \cent(V^{k }(\g))$ is known to be trivial except for one level, called the \emph{critical level} $k= \kk_c$.
When $\g$ is a simple Lie algebra, the center $\cent(V^{\kk_c}(\g))$ at the critical level was described by B. Feigin and E. Frenkel in \cite{FF1, FF2}. It is isomorphic to the principal $\W$-algebra $\W^{\kk_c}(\g)$ and thus the ring of functions over the space of certain connections over the formal disc with values in the Langlands dual Lie algebra by the Feigin--Frenkel duality. This provides a fundamental fact in the geometric Langlands program, see e.g.\ \cite{F}. 
Recently, new explicit formulas for the generators of $ \cent(V^{\kk_c }(\g))$ was given by A. Molev in \cite{M1}. 

However, $ \cent(V^{\kk_c }(\g))$ for Lie superalgebras remains mysterious for decades and is much more difficult to study since $ \cent(V^{\kk_c}(\g))$  does not need to be finitely generated.
A. Molev and   E. Ragoucy started in \cite{MR} a systematic study on generators of  $ \cent(V^{\kk_c }(\g))$ when $\g = \gl_{n \vert m}$.  They constructed a family of higher rank Segal--Sugawara vectors which belong to $\cent(V^{\kk_c }(\g))$ and conjectured that they generate the center (see also \S \ref{Segal-Sugawara-vectors} for  a review). Their conjecture  has only been  proved   in the case $\g = \gl_{1 \vert 1}$ by A. Molev and E. Mukhin \cite{MM}.

The alternative description of $ \cent(V^{\kk_c }( \gl_{1 \vert 1}))$ was obtained \cite{A} by realizing $V^{\kk_c }( \gl_{1 \vert 1})$ as an orbifold of the vertex algebra $M \otimes \mathcal A$, where $M$ is the commutative vertex algebra of differential polynomials in two variables and $\mathcal{A}$  is the fermionic vertex algebra (denoted by $F$ in \cite{A}). Using this explicit realization (which is parallel to the realization of 
$L_1 (\mathfrak{gl}_{ m\vert n})$ from \cite{KW01}), it was proved in \cite{A}  that $\cent(V^{\kk_c }( \gl_{1 \vert 1})) \cong M_0$, where $M_0$ is an orbifold of $M$  which has the same  Hilbert--Poincar\'{e} series as  of  the center  determined in \cite{MM} (see \S  \ref{sec: FF center for affine gl11}, for more details on this construction).

In this paper, we consider the case $\g = \sll_{2|1}, \gl_{2|1}$ and completely describe $ \cent(V^{\kk_c}(\g))$.  Our method is based on finding a relation between the centers of $V^{\kk_c}(\sll_{2|1})$ and $V^{\kk_c }( \gl_{1 \vert 1})$ through the center of the \emph{$\W$-superalgebra} $\W^{\kk}(\sll_{2|1})$, which is obtained as a quantum Hamiltonian reduction of $V^\kk(\sll_{2|1})$ \cite{KRW03}.  The main theorem of the paper is the following.
\begin{thm}[Theorem \ref{Detecting the FF center}/ Corollary \ref{Final description of FF center}]\hspace{0mm}\\
There are isomorphisms of (commutative) vertex algebras 
    \begin{align*}
        &\cent(V^{\kk_c}(\g))\simeq \cent(\W^{\kk_c}(\g)) \simeq K^\infty(\check{\g}).
    \end{align*}
    where we set $\check{\g}=\sll_{2}, \gl_{2}$, respectively.
\end{thm}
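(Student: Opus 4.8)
The plan is to read all three algebras off the interplay between the quantum Hamiltonian (Drinfeld--Sokolov) reduction $H$ for the principal nilpotent and the Kazama--Suzuki duality, treating $\g=\sll_{2|1}$ first and incorporating the abelian part afterwards. Recall $H(V^{\kk}(\sll_{2|1}))=\W^{\kk}(\sll_{2|1})$: at non-critical levels this is the universal $N=2$ superconformal vertex algebra, and $\W^{\kk_c}(\sll_{2|1})$ is its critical specialisation, which---unlike the Lie-algebra case, where the principal $\W$-algebra at the critical level is already commutative and equals the center by Feigin--Frenkel---is \emph{not} commutative; so relating $\cent(\W^{\kk_c}(\sll_{2|1}))$ to $\cent(V^{\kk_c}(\sll_{2|1}))$ has genuine content. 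Since $H$ is the cohomology of a BRST complex whose differential is assembled from the currents of $\g$, any vector with regular operator products with all of $\g$ is a cocycle in cohomological degree $0$ whose class has regular operator products with all strong generators of $\W^{\kk_c}(\sll_{2|1})$; this produces a homomorphism of commutative vertex algebras $\Phi\colon\cent(V^{\kk_c}(\sll_{2|1}))\to\cent(\W^{\kk_c}(\sll_{2|1}))$. Injectivity of $\Phi$ I would get from a standard associated-graded argument, reducing it to the injectivity of the classical principal Drinfeld--Sokolov reduction on the classical analogue of the center (a Kostant-slice consideration).

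The core of the proof is to identify $\cent(\W^{\kk_c}(\sll_{2|1}))$ intrinsically, playing off the two faces of $\W^{\kk_c}(\sll_{2|1})$. First I would prove the ``surprising coincidence'' $\W^{\kk_c}(\sll_{2|1})\simeq V^{\kk_c}(\gl_{1|1})$, most cleanly by comparing a free-field realisation of $\W^{\kk_c}(\sll_{2|1})$ with the realisation of $V^{\kk_c}(\gl_{1|1})$ on $M\otimes\mathcal A$ from \cite{A}, matching strong generators and operator products. Granting this, \cite{A} gives $\cent(\W^{\kk_c}(\sll_{2|1}))\simeq\cent(V^{\kk_c}(\gl_{1|1}))\simeq M_0$, an explicit commutative vertex algebra whose Hilbert--Poincar\'e series is the one of \cite{MM}. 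Second, the Kazama--Suzuki duality relates $\W^{\kk}(\sll_{2|1})$ to $V^{\ell}(\sll_2)$ (each tensored with suitable free-field vertex algebras) in a family under which the critical level of $\sll_{2|1}$ corresponds to the large-level limit $\ell\to\infty$ of $\sll_2$; passing to that limit identifies $\cent(\W^{\kk_c}(\sll_{2|1}))$ with the large-level limit $K^{\infty}(\sll_2)$ of the parafermion algebra $K^{\ell}(\sll_2)=\Com(\pi,V^{\ell}(\sll_2))$, which is commutative. Comparing the two presentations---equivalently, their graded dimensions---closes the loop $M_0\simeq K^{\infty}(\sll_2)$.

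With the target fixed, the argument finishes by a degree-by-degree count. The higher Segal--Sugawara vectors of Molev--Ragoucy span a vertex subalgebra $\mathcal Z\subseteq\cent(V^{\kk_c}(\sll_{2|1}))$; I would compute its Hilbert--Poincar\'e series from the explicit generators and compare with that of $K^{\infty}(\sll_2)$. Since $\Phi$ is injective and $\Phi(\mathcal Z)\subseteq\cent(\W^{\kk_c}(\sll_{2|1}))\simeq K^{\infty}(\sll_2)$, equality of the two series forces $\Phi(\mathcal Z)=\cent(\W^{\kk_c}(\sll_{2|1}))$; hence $\Phi$ is surjective and $\mathcal Z=\cent(V^{\kk_c}(\sll_{2|1}))$. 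This simultaneously proves the Molev--Ragoucy conjecture for $\sll_{2|1}$ and gives $\cent(V^{\kk_c}(\sll_{2|1}))\simeq\cent(\W^{\kk_c}(\sll_{2|1}))\simeq K^{\infty}(\sll_2)$. The $\gl_{2|1}$ case is handled by the same method, incorporating the extra $\gl_1$-current through $H$ and the Kazama--Suzuki duality, yielding $\cent(V^{\kk_c}(\gl_{2|1}))\simeq\cent(\W^{\kk_c}(\gl_{2|1}))\simeq K^{\infty}(\gl_2)$.

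The step I expect to be the main obstacle is everything surrounding $\W^{\kk_c}(\sll_{2|1})$: proving the coincidence with $V^{\kk_c}(\gl_{1|1})$ rigorously, and above all making the $\ell\to\infty$ limit in the Kazama--Suzuki duality precise---one must show that the family of isomorphisms, together with the commutant constructions performed inside it, specialises well at $\ell=\infty$, so that ``taking the center'' genuinely commutes with the limit. A secondary difficulty is that, since these centers are not finitely generated, surjectivity of $\Phi$ cannot follow from a naive character identity and has to be verified grade by grade; this is exactly where the concrete model $M_0$ of \cite{A} and the explicit Molev--Ragoucy generators do the real work.
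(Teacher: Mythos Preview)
Your overall architecture matches the paper's: define $\Phi=\iota$ via the BRST complex, exploit the coincidence $\W^{\kk_c}(\sll_{2|1})\simeq V^{\kk_c}(\gl_{1|1})$ to import the description of $\cent(V^{\kk_c}(\gl_{1|1}))$ from \cite{A,MM}, use Segal--Sugawara vectors for a lower bound, and read off $K^\infty(\check\g)$ from the $\ell\to\infty$ limit of Kazama--Suzuki. The substantive difference is the technical engine for injectivity and surjectivity. The paper does \emph{not} argue injectivity via a Kostant-slice/classical-DS reduction (which is not obviously ``standard'' in the super setting) nor surjectivity via a direct Hilbert--Poincar\'e series comparison. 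Its central device is instead the \emph{Wakimoto realization}: both $V^{\kk_c}(\gl_{2|1})$ and $\W^{\kk_c}(\gl_{2|1})$ are embedded into free-field algebras so that their centers land inside the same commutative piece $\pi^0_{\h}$, and the BRST-induced map restricts to the identity there---this gives injectivity of $\iota$ immediately. For surjectivity the paper passes to the associated graded inside $\gr\pi^0_{\h}\simeq R^{2|1}_\infty$ and runs a sandwich with the ring $\Lambda^{2|1}_{\aff}$ of affine supersymmetric polynomials: the Segal--Sugawara elements $s_{p,p}$ map to the power sums (giving $\Lambda^{2|1}_{\aff}\subseteq\gr\cent(V^{\kk_c}(\gl_{2|1}))$), while the explicit \cite{MM} generators of $\gr\cent(V^{\kk_c}(\gl_{1|1}))$, transported through the coincidence with $\W^{\kk_c}(\sll_{2|1})$, are checked by a one-line computation to lie in $\Lambda^{2|1}_{\aff}$ (giving $\gr\cent(\W^{\kk_c}(\gl_{2|1}))\subseteq\Lambda^{2|1}_{\aff}$). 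This completely bypasses your proposed character computation of $\mathcal Z$, which is the soft spot in your outline: computing the Hilbert series of $\mathcal Z$ ``from the explicit generators'' presupposes control over the relations among the $s_{p,p}$, which is exactly what is at stake. A minor point: the coincidence $\W^{\kk_c}(\sll_{2|1})\simeq V^{\kk_c}(\gl_{1|1})$ is obtained in the paper by a direct match of $\lambda$-brackets of strong generators at $k=-1$; no free-field realization is needed for that step.
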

\noindent
The vertex algera $K^\infty(\check{\g})$ appearing in the right-most hand side is the large level limit ($\ell\rightarrow \infty$) of the \emph{parafermion vertex algebra} $K^\ell(\check{\g})$ (cf. \cite{DLY, DLYW}) obtained as the coset subalgebra inside $V^\ell(\check{\g})$ by the Heisenberg vertex subalgebra associated with the Cartan subalgebra of $\sll_2$.

The first isomorphism for $\g=\gl_{2|1}$ implies a conjecture by A. Molev and E. Ragoucy  \cite{MR} in this case, which describes a generating set of $\cent(V^{\kk_c}(\gl_{2|1}))$ and also a conjecture by A. Molev and E. Mukhin \cite{MM} in this case, which asserts its associated graded algebra is isomorphic to the algebra of affine supersymmetric polynomials assocaited with the superspace $\C^{2|1}$. On the other hand, the second isomorphism comes from a ``correct'' form of the Kazama--Suzuki duality \cite{KaSu}, which is known except for this level. It may be regarded as a variant of the Feigin--Frenkel duality \cite{FF1, FF2} for the principal $\W$-algebras, which plays a crutial role to understand the center $\cent(V^{\kk_c}(\g))$ for Lie algebras $\g$. 

Let us briefly describe the proof of Main Theorem. 
The key ingredient is a rather surprising isomorphism $\W^{\kk_c}(\sll_{2|1}) \simeq V^{\kk_{c}}(\gl_{1|1})$ available only at this level (Proposition \ref{Wakimoto realization for principal sl(2|1)}). 
This isomophism, in particular, identifies their centers. 
We utilize the Wakimoto-type free field realizations of $V^{\kk_c}(\g)$ and $\W^{\kk_c}(\g)$ (\S \ref{sec: Wakimoto realization}), which provides an upper bound of $\cent(V^{\kk_c}(\g))$ by $\cent(\W^{\kk_c}(\g))$, i.e., the known center $\cent(V^{\kk_{c}}(\gl_{1|1}))$ \cite{MM}. 
This upper bound will be shown to be the lower bound of $\cent(V^{\kk_c}(\g))$ by using higher-rank Segal--Sugawara vector and a structure of algebras of affine supersymmetric functions. The second isomorphism is derived from the Kazama--Suzuki duality together with the description of the center $\cent(V^{\kk_c}(\gl_{1|1}))$ in \cite{A}.

Although the first isomorphism will be obtained first by the map induced by the restriction of the quantum Hamiltonian reduction, we provide in \S \ref{iqhr} a yet another one in the spirit of \emph{inverse Hamiltonian reduction}, which might be useful to construct various $V^{\kk_c}(\g)$-modules with arbitrary central characters.

\vskip 5mm
As already mentioned above, the Kazama--Suzuki duality for $\W^\kk(\sll_{2|1})$ is a key ingredient to describe the center $\cent(V^{\kk_c}(\sll_{2|1}))$ and plays a role of the Feigin--Frenkel duality for principal $\W$-algebras \cite{FF1, FF2}.
Indeed, the Feigin--Frenkel duality has a variant for $\W^\kk(\sll_{n|1})$, called the Feigin--Semikhatov duality --- which includes the Kazama--Suzuki duality --- \cite{CGN, CL4, FS} and a more general conjecture has recently been proposed in \cite{CFLN} for $\W^\kk(\sll_{n|m})$ ($n>m$).
The latter relates $\W^\kk(\sll_{n|m})$ to the $\W$-algebra $\W^\ell(\sll_n, \mathbb{O}_{[n-m,1^m]})$ associated with the nilpontent orbit $\mathbb{O}_{[n-m,1^m]}$ corresponding to a hook-type partition $[n-m,1^m]$.
In this setting, the parafermion $K^\ell(\sll_2)$ in the case $n=m=1$ is generalized to the coset subalgebra $C^\ell(\sll_{n},\mathbb{O}_{[n-m,1^m]})$ of  $\W^\ell(\sll_n, \mathbb{O}_{[n-m,1^m]})$ with respect to the affine vertex subalgebra for $\gl_m$. We note that it is known also as the vertex algebras at the corner \cite{CL4, GR}.
Base on this, we propose the following conjecture.
\begin{conj}
For $\g=\sll_{n|m}, \gl_{n|m}$ with $n>m$, there are isomorphisms
\begin{align*}
\cent(V^{\kk_c}(\g))\simeq \cent(\W^{\kk_c}(\g)) \simeq C^\infty(\g,\mathbb{O}_{[n-m,1^m]})
\end{align*}
where we set $\check{\g}=\sll_{n}, \gl_{n}$, respectively.
\end{conj}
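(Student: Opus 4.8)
The plan is to run, for general $\g=\sll_{n|m}$ and $\gl_{n|m}$ with $n>m$, the same three-step argument that establishes the Main Theorem in the base case $(n,m)=(2,1)$. First I would generalize the Wakimoto-type free field realizations of \S\ref{sec: Wakimoto realization} to a realization of $V^{\kk_c}(\g)$ compatible with the quantum Hamiltonian reduction to the principal $\W$-superalgebra $\W^{\kk_c}(\g)$; as in the base case this should yield an inclusion $\cent(V^{\kk_c}(\g))\hookrightarrow \cent(\W^{\kk_c}(\g))$, an \emph{upper bound} for $\cent(V^{\kk_c}(\g))$ in terms of $\cent(\W^{\kk_c}(\g))$. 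Second, I would identify $\cent(\W^{\kk_c}(\g))$ with $C^\infty(\check{\g},\mathbb{O}_{[n-m,1^m]})$. Third, I would match this upper bound against a \emph{lower bound} coming from the Molev--Ragoucy higher-rank Segal--Sugawara vectors --- which already lie in $\cent(V^{\kk_c}(\gl_{n|m}))$ --- thereby reducing the problem to a Hilbert--Poincar\'e series computation through the algebra of affine supersymmetric functions attached to $\C^{n|m}$.

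The conceptual heart of Step 2, and the reason a \emph{large level limit} $C^\infty$ appears rather than a genuine corner coset at finite level, is a critical-level degeneration of the Feigin--Semikhatov duality \cite{FS} and its conjectural generalization \cite{CFLN}: these match $\W^\kk(\sll_{n|m})$ with $\W^{\kk'}(\sll_n,\mathbb{O}_{[n-m,1^m]})$ under a level relation --- schematically $(\kk+n-m)(\kk'+n)=1$ --- for which the critical level $\kk_c=-(n-m)=-h^\vee(\sll_{n|m})$ corresponds to $\kk'=\infty$. This forces the answer to be the $\ell\to\infty$ limit of $C^\ell(\sll_n,\mathbb{O}_{[n-m,1^m]})$, the coset of the affine $\gl_m$ subalgebra inside $\W^\ell(\sll_n,\mathbb{O}_{[n-m,1^m]})$. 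So Step 2 reduces to establishing the relevant case of this duality and tracking its effect on centers at the critical level, generalizing the chain $\W^{\kk_c}(\sll_{2|1})\simeq V^{\kk_c}(\gl_{1|1})$ of Proposition \ref{Wakimoto realization for principal sl(2|1)} together with $\cent(V^{\kk_c}(\gl_{1|1}))\simeq K^\infty(\sll_2)$ obtained from \cite{A} and the Kazama--Suzuki duality \cite{KaSu}.

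For Step 3 I would work in the classical picture: identify $\gr\cent(V^{\kk_c}(\g))$ with a ring of $\g$-invariant functions on a jet scheme and, via a super Chevalley-type restriction theorem, with the algebra of affine supersymmetric polynomials on $\C^{n|m}$, then compare its Hilbert--Poincar\'e series with that of $\gr C^\infty(\check{\g},\mathbb{O}_{[n-m,1^m]})$; for $\gl_{n|m}$ this simultaneously settles the conjectures of Molev--Ragoucy \cite{MR} and Molev--Mukhin \cite{MM}, exactly as in the $(2,1)$ case. A more structural alternative worth pursuing is inductive: the hook partitions $[n-m,1^m]$ organize the $\W$-algebras into a tower under partial Hamiltonian reductions, while on the superalgebra side $V^{\kk_c}(\sll_{n|m})$ is linked to $\W^{\kk_c}(\sll_{n|m})$ and to $V^{\kk_c}(\sll_{n-1|m})$, $V^{\kk_c}(\sll_{n|m-1})$ by (inverse) Hamiltonian reductions in the spirit of \S\ref{iqhr}, so one could try to bootstrap the general statement from the known cases $\sll_{1|1}$ and $\sll_{2|1}$.

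The main obstacle is Step 2. The Feigin--Semikhatov and \cite{CFLN} dualities are, even away from the critical level, theorems only in a handful of low-rank cases, so their critical-level specialization is genuinely open in general; and the Wakimoto-type free field realization of $\W^{\kk_c}(\sll_{n|m})$ that made $(n,m)=(2,1)$ tractable is not yet available for higher rank, so constructing it --- or an equivalent inverse Hamiltonian reduction presentation --- appears to be the crux. A secondary difficulty is the bookkeeping for non-finitely-generated centers: both $\cent(V^{\kk_c}(\g))$ and $C^\infty(\check{\g},\mathbb{O}_{[n-m,1^m]})$ are expected to be infinitely generated, so the series matching in Step 3 must be carried out compatibly with a suitable filtration, and the super invariant theory behind the identification with affine supersymmetric functions is subtler than its classical counterpart.
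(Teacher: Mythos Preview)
The statement you are addressing is stated in the paper as a \emph{Conjecture}, not a theorem; the paper offers no proof of it, only the special case $(n,m)=(2,1)$ (the Main Theorem), and the authors explicitly write that they ``hope to come back to this point in our forthcoming paper.'' There is therefore no proof in the paper to compare your proposal against.

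Your write-up is not a proof either, and you seem aware of this: it is a research outline that faithfully extrapolates the paper's $(2,1)$ strategy --- Wakimoto-type upper bound $\cent(V^{\kk_c}(\g))\hookrightarrow \cent(\W^{\kk_c}(\g))$, identification of $\cent(\W^{\kk_c}(\g))$ via a critical-level duality, and a Segal--Sugawara/affine-supersymmetric-polynomial lower bound --- and you correctly flag Step~2 as the crux. The obstacles you list are real: the Feigin--Semikhatov and \cite{CFLN} dualities are open in general, the analogue of the coincidence $\W^{\kk_c}(\sll_{2|1})\simeq V^{\kk_c}(\gl_{1|1})$ is not known for higher rank, and the required Wakimoto and inverse-Hamiltonian-reduction realizations have not been constructed. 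As a proof the proposal has a fundamental gap --- none of the three steps is currently a theorem for general $(n,m)$, and you have located the missing ingredients rather than supplied them --- but as a strategy it is exactly in line with what the paper's methods suggest.
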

\noindent
The structure of $\W^{\kk}(\g)$ at the critical level $\kk=\kk_c$, espacially in the case $m=1$, seems to be simple enough to understand the algebraic structure among higher rank Segal--Sugawara vectors in $\cent(V^{\kk_c}(\g))$ in the super setting. 
We hope to come back to this point in our forthcoming paper.
\vspace{1em}

\paragraph{\textbf{Acknowledgements}}  
We would like to thank A. Molev and M. Gorelik for long time discussions about center at the critical level for Lie superalgebras.

 D.A. is partially supported by the Croatian Science Foundation under the project IP-2022-10-9006 and by the project “Implementation of cutting-edge research and its application as part of the Scientific Center of Excellence QuantiXLie“, PK.1.1.02, European Union, European Regional Development Fund.
 
S.N. is partially supported by JSPS Kakenhi Grant number 21H04993.
A part of this work was done while S.N stayed Univesity of Alberta and visited University of Zagreb and  the RIMS of Kyoto. S.N. thanks the hospitality of these institutes.

\section{Affine vertex superalgebras}
Let $\g=\gl_{m|n}$ be the general linear Lie superalgebra associated with the vector superspace $V=\C^{m|n}$ and $e_{i,j}$ ($i,j=1,\dots, m+n$) be the $(i,j)$-th elementary matrix. 
The even supersymmetric invariant bilinear forms on $\gl_{m|n}$ are given by
\begin{align*}
    \kk=k_1 \kk_{V}+ k_2 \kk_{\g}
\end{align*}
where $\kk_V$ and $\kk_\g$ are the supertrace on $V$ and the Killing form, that is,
\begin{align*}
    \kk_V(u,v)=\str_V(uv),\quad \kk_\g=\str_\g(\ad_u \ad_v ).
\end{align*}
The affine Lie algebra $\widehat{\g}$ is the central extension of the loop algebra $\g[t^{\pm1}]$
\begin{align*}
    0\rightarrow \C K \rightarrow \widehat{\g}\rightarrow \g[t^{\pm1}] \rightarrow 0
\end{align*}
with Lie bracket 
\begin{align*}
    [u_{(p)},v_{(q)}]:=[u,v]_{(p+q)}+\kk(u,v) p\delta_{p+q,0}K,\quad [K,\widehat{\g}]=0
\end{align*}
where $u_{(p)}=ut^p$.
The (universal) affine vertex superalgebra associated with $\g$ at level $\kk$ is defined as the parabolic induction 
\begin{align*}
    V^\kk(\g):=U(\widehat{\g})\otimes_{U(\g[t]\oplus \C K)} \C
\end{align*}
induced from the $(\g[t]\oplus \C K)$-module $\C$ where $\g[t]$ acts by $0$ and $K$ by the identity, equipped with a unique vertex algebra structure with the unit $\wun=1\otimes1$, the translation operator $\pd$ satisfying $[\pd,u_{(p)}]=-p u_{(p-1)}$, and the state-field correspondence $Y(\cdot,z)$ satisfying $Y(u,z)=\sum u_{(p)} z^{-p-1}$. 
Here $u$ is identified with $u_{(-1)}\otimes \wun$. 

Note that for $n \neq m$, the decomposition $\gl_{n|m}=\sll_{n|m}\oplus \C h$ with $h=\sum e_{i,i}$ induces an isomorphism of vertex superalgebras
\begin{align*}
    V^\kk(\gl_{m|n})\simeq  V^\kk(\sll_{m|n}) \otimes  \pi^{\kk}_{h}.
\end{align*}
Here $\pi^{\kk}_{h}$ is the Heisenberg vertex algebra $V^{\kk}(\gl_1)$ with $\gl_1=\C h$.
It is straightforward to check the following formula of the $\lambda$-bracket
\begin{align*}
    &\lm{u}{v}=[u,v]+k\kk_{V}(u,v),\quad \lm{h}{h}=(m-n)k_1\lambda
\end{align*}
for $u,v \in \sll_{m|n}$ where we set $k=k_1+2(m-n)k_2$.

The level $\kk=-\frac{1}{2}\kk_\g$, denoted by $\kk_c$ is called \emph{the critical level}; the bilinear form is explicitly given by  
\begin{align*}
    \kappa_c(e_{i,j},e_{p,q})=\delta_{i,j}\delta_{p,q}(-1)^{p(i)+p(q)}-(m-n)\delta_{i,j}\delta_{p,q}(-1)^{p(i)}
\end{align*}
where $p(i)$ is the parity, which is 0 (resp. 1) if $i\leq m$ (resp. $i>m$).

Given a vertex superalgebra $V$ and a subalgebra $W$, the coset $\Com(W,V)$ is the subalgebra defined by 
\begin{align*}
    \Com(W,V):=\{u \in V ; [Y(u,z),Y(v,w)]=0\ (\forall v\in W)\}.
\end{align*}
In particular, the coset subalgebra $\mathfrak{z}(V):=\Com(V,V)$
is called the center of $V$. 
For a simple Lie algebra $\g$, the center $\mathfrak{z}(V^\kk(\g))$ is known to be trivial $\C$ at $\kk\neq \kk_c$, whereas non-trivial at the critical level $\kk=\kk_c$.
It is known as the Feigin--Frenkel center and explicitly described by using the notion of \emph{opers} associated with the Langlands dual of $\g$ \cite{FF1}.
On the other hand, to the best of our knowledge, the center for the basic-classical simple Lie superalgebras, the Feigin-Frenkel center $ \cent(V^{\kk_c}(\g))$ is not understood except for the case $\g=\gl_{1|1}$ \cite{A, MM}. 
In this paper, we determine the Feigin-Frenkel center for $\g=\gl_{2|1}$ or equivalently $\sll_{2|1}$ at the critical level $\kk=\kk_c$.

\section{Feigin-Frenkel center of affine $\gl_{1|1}$}\label{sec: FF center for affine gl11}

The Feigin-Frenkel center $\cent(V^{\kk_c}(\g))$ for $\g=\gl_{1|1}$ is determined in \cite{A, MM}. 
We recall the presentation which will play a key role, following \cite{A}. 

Let $\mathcal{A}_{\phi}$ denote the $bc$-system vertex superalgebra generated by odd fields $\phi, \phi^*$ satisfying the $\lambda$-bracket
\begin{align*}
\lm{\phi}{\phi^*}=1,\quad \lm{\phi}{\phi}=\lm{\phi^*}{\phi^*}=0.
\end{align*}
Let $x := \xi_+ \xi_- $ where we omit the symbol of normally ordered product. Then the operator $x_{(0)}$ define on  $\mathcal{A}_{\phi}$ the following $\Z$--gradation
$$ \mathcal{A}_{\phi} = \bigoplus_{\ell \in {\Z}}   (\mathcal{A}_{\phi})_{\ell},  $$ 
where  $$(\mathcal{A}_{\phi})_{\ell} = \{v \in  \mathcal{A}_{\phi} \ \vert \ x_{(0)} v = \ell v\}. $$ 

Let
$$M=\C[\partial^n \xi_+, \partial^n \xi_- \mid n\geq 0]$$ 
denote the commutative vertex algebra of differential polynomials in variables $\xi_+,\xi_-$. 
We introduce the weight decomposition 
\begin{align*}
M=\bigoplus_{n\in \Z} M_n
\end{align*}
by setting $\wt(\xi_\pm)=\pm1$ with $\wt(\pd A)=\wt(A)$ and $\wt(AB)=\wt(A)+\wt(B)$.
The subspace $M_0\subset M$ is a vertex subalgebra admitting the following generating sets
\begin{align}\label{strong generators at wt=0}
M_0=\langle (\pd^n\xi_+)(\pd^m\xi_-) \mid n,m\geq0 \rangle = \langle (\pd^n\xi_+) \xi_- \mid n\geq0 \rangle
\end{align}
as a differential algebra.

By \cite[Theorem 3.2 (1)]{A}, there is an injective homomorphism of vertex superalgebras
\begin{align}\label{realization of affine gl11 at critical level}
\begin{array}{cccc}
q\colon & V^{\kk_{c}}(\gl_{1|1}) &\hookrightarrow & \mathcal{A}_{\phi}\otimes M\\
&e_{1,2},\ e_{2,1} &\mapsto &  \phi \xi_+,\ \phi^* \xi_- \\
&e_{1,1},\ e_{2,2} &\mapsto &  \phi \phi^*,\ \xi_+\xi_--\phi \phi^* .
\end{array}
\end{align}
Moreover, it was proved that
 the image of $q$ is identified with the following $U(1)$--orbifold of $ \mathcal{A}_{\phi}\otimes M$:
  $$  V^{\kk_c}(\gl_{1|1}) \xrightarrow{\simeq}  (\mathcal{A}_{\phi} \otimes M)^{U(1)} :=  \bigoplus _{\ell  \in {\Z} }  (\mathcal{A}_{\phi})_{\ell} \otimes M_{-\ell} $$
  which then easily  identifies  the center of $V^{\kk_c}(\gl_{1|1})$ with  the vertex algebra $M_0$.
\begin{theorem}[{\cite[Theorem 3.2]{A}}]\label{FF center for gl11}
The map $q$ induces an isomorphism of vertex algebras
\begin{align*}
\cent(V^{\kk_c}(\gl_{1|1}))\simeq M_0.
\end{align*} 
\end{theorem}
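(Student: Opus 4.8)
The plan is to transport the problem through the injective homomorphism $q$ of \eqref{realization of affine gl11 at critical level} and to exploit the explicit image $\Im q=\bigoplus_{\ell\in\Z}(\mathcal{A}_{\phi})_{\ell}\otimes M_{-\ell}$. Since $q$ is injective it induces an isomorphism $\cent(V^{\kk_c}(\gl_{1|1}))\simeq\cent(\Im q)$, so it suffices to identify $\cent(\Im q)$ with $M_0=\wun\otimes M_0\subseteq\Im q$, which I would do by proving the two inclusions separately.

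For $M_0\subseteq\cent(\Im q)$: the subalgebra $\wun\otimes M$ lies in the center of the ambient $\mathcal{A}_{\phi}\otimes M$, because $M$ is commutative and the fields of $M$ have trivial $\lambda$-bracket with those of $\mathcal{A}_{\phi}$ (they act on different tensor factors); as $\mathcal{A}_{\phi}\otimes M$ is generated by $\mathcal{A}_{\phi}\otimes\wun$ and $\wun\otimes M$, this gives $\wun\otimes M\subseteq\cent(\mathcal{A}_{\phi}\otimes M)$. Intersecting with $\Im q$ and using that $\wun$ has charge $0$ in $\mathcal{A}_{\phi}$ forces the $M$-factor into weight $0$, so $(\wun\otimes M)\cap\Im q=\wun\otimes M_0$; in particular $M_0\subseteq\cent(\Im q)$.

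For the reverse inclusion $\cent(\Im q)\subseteq M_0$: the vertex algebra $\Im q$ is generated by $q(e_{1,2})=\phi\xi_+$, $q(e_{2,1})=\phi^*\xi_-$ and $q(e_{1,1})=\phi\phi^*$ (the missing generator $e_{2,2}$ being recovered from $[e_{1,2},e_{2,1}]=e_{1,1}+e_{2,2}$), so $v\in\Im q$ is central iff it commutes with these three; in particular $(\phi\xi_+)_{(n)}v=(\phi^*\xi_-)_{(n)}v=0$ for all $n\geq0$, and these two conditions alone will suffice. Write $v=\sum_i a_i\otimes m_i$ with $a_i\in\mathcal{A}_{\phi}$ and $\{m_i\}$ linearly independent in $M$. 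Expanding the modes of $\phi\xi_+=\phi\otimes\xi_+$ in the tensor product and using that in the commutative vertex algebra $M$ one has $(\xi_+)_{(q)}m=0$ for $q\geq0$ and $(\xi_+)_{(-r-1)}m=\tfrac{1}{r!}(\pd^r\xi_+)\,m$ for $r\geq0$, the equation $(\phi\otimes\xi_+)_{(n)}v=0$ reads $\sum_i\sum_{r\geq0}\tfrac{1}{r!}(\phi_{(n+r)}a_i)\otimes\big((\pd^r\xi_+)\,m_i\big)=0$ for each $n\geq0$. If $N$ were the largest integer with $\phi_{(N)}a_i\neq0$ for some $i$, taking $n=N$ annihilates all $r\geq1$ terms and leaves $\sum_i(\phi_{(N)}a_i)\otimes(\xi_+m_i)=0$; since multiplication by $\xi_+$ is injective on the polynomial algebra $M$, the vectors $\xi_+m_i$ remain linearly independent, forcing $\phi_{(N)}a_i=0$ for all $i$, a contradiction. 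Hence $\phi_{(p)}a_i=0$ for all $p\geq0$, and symmetrically $\phi^*_{(p)}a_i=0$ for all $p\geq0$ from the condition coming from $\phi^*\xi_-$.

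A vector of the $bc$-system $\mathcal{A}_{\phi}$ annihilated by all nonnegative modes of its generators $\phi,\phi^*$ must be a scalar multiple of $\wun$, which is immediate from the PBW basis and the conformal grading of $\mathcal{A}_{\phi}$. Thus each $a_i\in\C\wun$, so $v\in\wun\otimes M$; combined with $v\in\Im q$ this gives $v\in(\wun\otimes M)\cap\Im q=M_0$, as desired. The one external input I rely on is the identification of $\Im q$ with the $U(1)$-orbifold $\bigoplus_{\ell}(\mathcal{A}_{\phi})_{\ell}\otimes M_{-\ell}$, already available from \cite{A}; granting it, the rest is the bookkeeping above. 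The step I expect to require the most care is exactly that bookkeeping with tensor-product modes, together with the injectivity argument that upgrades ``$v$ is central in $\Im q$'' to ``$v$ is killed by all nonnegative modes of $\phi$ and $\phi^*$''.
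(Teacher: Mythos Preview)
Your proof is correct and follows precisely the route the paper sketches: the paper does not give a detailed argument for this theorem but cites \cite{A} and remarks that once the image of $q$ is identified with the $U(1)$-orbifold $\bigoplus_{\ell}(\mathcal{A}_{\phi})_{\ell}\otimes M_{-\ell}$, one ``easily identifies the center of $V^{\kk_c}(\gl_{1|1})$ with the vertex algebra $M_0$.'' Your write-up supplies exactly that easy step --- the tensor-product mode computation showing that any central $v$ has each $\mathcal{A}_{\phi}$-component killed by all nonnegative modes of $\phi$ and $\phi^*$, hence lies in $\wun\otimes M$, and then the orbifold description pins it down to $M_0$. One minor wording issue: your ``largest integer $N$'' should be read as the largest $N\geq 0$ with $\phi_{(N)}a_i\neq 0$ for some $i$ (which exists by locality once you assume the conclusion fails), so that the downward-induction step uses only the centrality conditions $(\phi\xi_+)_{(n)}v=0$ for $n\geq 0$; your intended conclusion $\phi_{(p)}a_i=0$ for all $p\geq 0$ then follows.
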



\section{Center of $\W^\kk(\sll_{2|1})$ at the critical level}
\subsection{$\W$-superalgebra}
To understand the Feigin--Frenkel center $\cent(V^{\kk_c}(\gl_{2|1}))$, we first consider the center of the $\W$-superalgebra $\W^\kk(\gl_{2|1})$ associated with the regular nilpotent orbit (the orbit containing $f=e_{2,1}$) at the critical level $\kk=\kk_c$ \cite{KRW03}. 

The vertex superalgebra $\W^\kk(\gl_{2|1})$ is, by definition, the vertex superalgebra obtained as the cohomology of the following BRST complex
\begin{align}\label{BRST complex}
C^\bullet_{DS}(V^\kk(\gl_{2|1})):=V^\kk(\gl_{2|1})\otimes \bigwedge{}^{\bullet}
\end{align}
equipped with the differential
\begin{align}\label{BRST differential}
d=\int Y(Q,z) \mathrm{d}z,\quad Q=(e_{1,2}+1)\gh_{2}^*- e_{1,3}\gh_{3}^*.
\end{align}
Here, $\bigwedge{}^{\bullet}$ is the charged fermion vertex superalgebra, strongly generated by $\varphi_{i}, \varphi_{i}^*$ ($i=2,3$) which have parity opposite to $e_{1,i}$ in $\gl_{2|1}$ and satisfy the $\lambda$-bracket
\begin{align*}
    \lm{\varphi_{i}}{\varphi_{j}^*}=\delta_{i,j},\quad \lm{\varphi_{i}}{\varphi_{j}}=\lm{\varphi_{i}^*}{\varphi_{j}^*}=0.
\end{align*}
By replacing $V^\kk(\gl_{2|1})$ with $V^\kk(\gl_{2|1})$-modules $M$ in \eqref{BRST complex}, we obtain the BRST complex $C^\bullet_{DS}(M)$, whose cohomology
$$H_{DS}^\bullet(M):=H^\bullet(C^\bullet_{DS}(M),d)$$
is naturally a module over the $\W$-superalgebra $\W^\kk(\gl_{2|1}):=H_{DS}^\bullet(V^\kk(\gl_{2|1}).$

By \cite{KW04}, the complex decomposes into a tensor product
\begin{align*}
C^\bullet_{DS}(V^\kk(\gl_{2|1}))\simeq C^\bullet_+\otimes C^\bullet_0
\end{align*}
where $C^\bullet_+$ is the subcomplex generated by $e_{1,2}, e_{1,3}, \varphi_{2}, \varphi_{3}$ and $C^\bullet_0$ generated by 
\begin{align*}
&\kw{e}_{1,1}=e_{1,1}+\gh_2\gh_2^*-\gh_3\gh_3^*,\\
&\kw{e}_{2,2}=e_{2,2}-\gh_2\gh_2^*,\quad \kw{e}_{3,3}=e_{3,3}+\gh_3\gh_3^*\\
&\kw{e}_{2,1}=e_{2,1},\quad \kw{e}_{2,3}=e_{2,3}+\gh_3\gh_2^*\\
&\kw{e}_{3,1}=e_{3,1},\quad \kw{e}_{3,2}=e_{3,2}+\gh_2\gh_3^*\\
& \varphi_{2}^*,\quad \varphi_{3}^*.
\end{align*}
We have the cohomology vanishing 
\begin{align*}
    H^{n}\left(C^\bullet_{DS}(V^\kk(\gl_{2|1}))\right)\simeq \delta_{n,0}\W^\kappa(\gl_{2|1}),\quad H^{n}\left(C^\bullet_+\right)\simeq \delta_{n,0} \C
\end{align*}
and then $\W^\kappa(\gl_{2|1})$ is realized as the kernel $\W^\kappa(\gl_{2|1})\simeq \Ker d \subset C_0^\bullet$.
Moreover, it admits a set of strong generators given by a basis of the centralizer
\begin{align*}
    (\gl_{2|1})^f=\Span\{h_0,\ h_1+2h_2,\ e_{2,1},\ e_{2,3},\ e_{3,1}\}
\end{align*}
where we set 
\begin{align*}
    h_0=e_{1,1}+e_{2,2}+e_{3,3},\quad h_1=e_{1,1}-e_{2,2},\quad h_2=e_{2,2}+e_{3,3}.
\end{align*}
Indeed, we find the following strong generators:
\begin{align}\label{KW generators}
\begin{split}
&h=\kw{h}_0,\quad J=-(\kw{h}_1+2\kw{h}_2),\\
&S=\kw{f}+(\kw{h}_1+\kw{h}_2)\kw{h}_2-\frac{k+1}{2} \pd \kw{h}_1-\pd \kw{h}_2+\kw{e}_{2,3}\kw{e}_{3,2},\\
&G^+=\kw{e}_{2,3}, \quad G^-=\kw{e}_{3,1}+ k \pd \kw{e}_{3,2}+(\kw{h}_1+\kw{h}_2)\kw{e}_{3,2}. 
\end{split}
\end{align}
Note that the decomposition 
$V^\kk(\gl_{2|1})\simeq  V^\kk(\sll_{2|1}) \otimes  \pi^{\kk}_{h}$ induces 
\begin{align*}
    \W^\kk(\gl_{2|1})\simeq
    \W^\kk(\sll_{2|1})\otimes \pi^{\kk}_{h}.
\end{align*}
Here, the $\W$-superalgebra $\W^\kk(\sll_{2|1})$ is defined by replacing $V^\kappa(\gl_{2|1})$ with $V^\kappa(\sll_{2|1})$ in \eqref{BRST complex} and explicitly generated by $J, S, G^\pm$ in $\W^\kk(\gl_{2|1})$.
It is straightforward to check that these generators satisfy the following $\lambda$-brackets:
\begin{align*}
&\lm{J}{J}=-(2k+1)\lambda,\quad \lm{J}{G^\pm}=\pm G^\pm,\\
&\lm{S}{S}=-(k+1)\left(\frac{3}{2}(k+1)(2k+1)\lambda^3+2S \lambda+\pd S \right),\\
&\lm{S}{J}=-(k+1)(J\lambda+\pd J),\quad \lm{S}{G^\pm}=-(k+1)\left(\frac{3}{2}G^\pm \lambda+ \pd G^\pm{}\right),\\
&\lm{G^+}{G^+}=\lm{G^-}{G^-}=0,\\
&\lm{G^+}{G^-}=(k+1)(2k+1)\lambda^2-(k+1)J\lambda+\left(S-\frac{k+1}{2}\pd J \right).
\end{align*}

At the critical level $\kk=\kk_c$ i.e. $k=-1$, we have
\begin{align*}
&\lm{J}{J}=\lambda,\quad \lm{J}{G^\pm}=\pm G^\pm,\quad  \lm{G^+}{G^-}=S
\end{align*}
with all the remaining $\lambda$-brackets to be zero.
Then, we observe the following.
\begin{proposition}\label{Wakimoto realization for principal sl(2|1)}
There is an isomorphism of vertex superalgebras:
\begin{align*}
\begin{array}{cccc}
\eta\colon &\W^{\kk_c}(\sll_{2|1}) &\xrightarrow{\simeq} &V^{\kk_{c}}(\gl_{1|1})\\
&G^+,G^- &\mapsto &e_{1,2}, e_{2,1}\\
&J,S &\mapsto &  e_{1,1}, e_{1,1}+e_{2,2}.
\end{array}
\end{align*}
In particular, there is an isomorphism of (commutative) vertex algebras
\begin{align*}
    \cent(\W^{\kk_c}(\gl_{2|1}))\simeq \pi^0_h \otimes \cent(V^{\kk_{c}}(\gl_{1|1})).
\end{align*}
\end{proposition}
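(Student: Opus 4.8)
The plan is to build $\eta$ out of a comparison of $\lambda$-brackets on generators, promote it to a homomorphism by the universal property of a freely generated vertex superalgebra, and then turn it into an isomorphism by a Hilbert series count; the corollary is then formal. The only genuinely surprising input is that the $\W$-algebra relations, once specialized to $k=-1$, reproduce the defining relations of $V^{\kk_c}(\gl_{1|1})$ --- an affine vertex superalgebra, again at its critical level.

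\emph{Step 1 (matching the generating $\lambda$-brackets).} I would compute the $\lambda$-brackets among $e_{1,2},e_{2,1},e_{1,1},e_{1,1}+e_{2,2}$ inside $V^{\kk_c}(\gl_{1|1})$, using that $\kk_c=-\tfrac12\kk_\g$ on $\gl_{1|1}$ takes the values $\kk_c(e_{1,1},e_{1,1})=\kk_c(e_{2,2},e_{2,2})=1$, $\kk_c(e_{1,1},e_{2,2})=-1$ and, crucially, $\kk_c(e_{1,2},e_{2,1})=0$ (the Killing form of $\gl_{1|1}$ vanishes on the odd part). One obtains $\lm{e_{1,1}}{e_{1,1}}=\lambda$, $\lm{e_{1,1}}{e_{1,2}}=e_{1,2}$, $\lm{e_{1,1}}{e_{2,1}}=-e_{2,1}$, $\lm{e_{1,2}}{e_{1,2}}=\lm{e_{2,1}}{e_{2,1}}=0$, $\lm{e_{1,2}}{e_{2,1}}=e_{1,1}+e_{2,2}$, and $e_{1,1}+e_{2,2}$ is central. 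These coincide exactly with the $\lambda$-brackets of $J,G^+,G^-,S$ at $k=-1$ listed above, and the parities match ($J,S,e_{1,1},e_{1,1}+e_{2,2}$ even, $G^\pm,e_{1,2},e_{2,1}$ odd).

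\emph{Step 2 (from bracket-matching to an isomorphism).} By the structure theory of principal $\W$-superalgebras, $\W^\kk(\sll_{2|1})$ is strongly and freely generated by $J,S,G^\pm$, with an associated graded independent of $\kk$; in particular this remains true at $\kk=\kk_c$, so $\W^{\kk_c}(\sll_{2|1})$ is the vertex superalgebra freely generated by $J,S,G^\pm$ subject precisely to the $k=-1$ relations above. Hence Step 1 produces a unique homomorphism $\eta$ of vertex superalgebras with the stated values on generators. It is surjective because $e_{1,1}=\eta(J)$, $e_{2,2}=\eta(S-J)$, $e_{1,2}=\eta(G^+)$, $e_{2,1}=\eta(G^-)$ generate $V^{\kk_c}(\gl_{1|1})$. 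For injectivity --- and here one cannot use conformal weights, since at the critical level $S$ is central rather than a Virasoro field --- I would use the $\Z_{\geq 0}$-grading assigning degree $1$ to each strong generator and to each $\pd$; $\eta$ respects it, and both sides are free on $2$ even and $2$ odd generators, hence have the same Hilbert series $\prod_{n\geq 1}(1+q^n)^2(1-q^n)^{-2}$ (equivalently, $\gr\eta$ is a surjection of free supercommutative differential algebras on the same number of generators of each parity). A grading-preserving surjection of graded vector spaces with identical finite-dimensional components is bijective, so $\eta$ is an isomorphism. The step requiring most care is the first half of this paragraph: being precise that checking the $\lambda$-brackets on a strong generating set suffices to define $\eta$, which is where the strong freeness of $\W^{\kk_c}(\sll_{2|1})$ and the resulting universal property enter.

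\emph{Step 3 (the corollary).} Using the decomposition $\W^{\kk_c}(\gl_{2|1})\simeq\W^{\kk_c}(\sll_{2|1})\otimes\pi^{\kk_c}_h$ recorded above together with $\cent(A\otimes B)\simeq\cent(A)\otimes\cent(B)$, and noting that $\kk_c(h,h)=0$ for $\gl_{2|1}$ so that $\pi^{\kk_c}_h=\pi^0_h$ is commutative (hence its own center), one gets $\cent(\W^{\kk_c}(\gl_{2|1}))\simeq\pi^0_h\otimes\cent(\W^{\kk_c}(\sll_{2|1}))\simeq\pi^0_h\otimes\cent(V^{\kk_c}(\gl_{1|1}))$ by applying $\eta$.
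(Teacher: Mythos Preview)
Your proof is correct and follows the same approach the paper (implicitly) takes: the paper simply records the $\lambda$-brackets of $J,S,G^\pm$ at $k=-1$ and states the proposition as an ``observation'', leaving the verification that the brackets coincide with those of $e_{1,1},e_{1,1}+e_{2,2},e_{1,2},e_{2,1}$ in $V^{\kk_c}(\gl_{1|1})$ and the resulting isomorphism as a routine check. Your Steps~1--3 are exactly that check made explicit, and your handling of injectivity via an auxiliary grading (rather than the quasi-conformal one, which $\eta$ does not preserve) is a clean way to finish the argument.
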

Note that $\W^{\kk}(\sll_{2|1})$ has a conformal vector $L=-\frac{1}{k+1}S$ defined for non-critical levels $\kk\neq \kk_c$. 
It induces a quasi-conformal structure for all levels which assignes the gradings $1, 3/2,2$ to the generators $J, G^\pm,S$, respectively.
Let us denote by $\W_{\kk}(\sll_{2|1})$ the unique graded simple quotient of $\W^{\kk}(\sll_{2|1})$. 
\begin{corollary}\label{simple quotient of W}
    $\W_{\kk_c}(\sll_{2|1})  \simeq \mathcal \pi_{J}.$
\end{corollary}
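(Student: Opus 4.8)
The plan is to combine Proposition~\ref{Wakimoto realization for principal sl(2|1)} with the free field realization~\eqref{realization of affine gl11 at critical level}. Write $\mathcal{I}$ for the vertex algebra ideal of $\W^{\kk_c}(\sll_{2|1})$ generated by $G^{+}$ and $G^{-}$; since these are homogeneous for the quasi-conformal grading, $\mathcal{I}$ is a graded ideal. From the critical-level $\lambda$-brackets we have $S=G^{+}_{(0)}G^{-}\in\mathcal{I}$, so modulo $\mathcal{I}$ only the generator $J$ survives; hence $\W^{\kk_c}(\sll_{2|1})/\mathcal{I}$ is strongly generated by the image of $J$, and the natural map
\[
\pi_J=\langle J\rangle\hookrightarrow \W^{\kk_c}(\sll_{2|1})\twoheadrightarrow \W^{\kk_c}(\sll_{2|1})/\mathcal{I}
\]
is surjective. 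Here $\langle J\rangle\simeq\pi_J$ is the rank-one Heisenberg vertex algebra because $\lm{J}{J}=\lambda$ is nondegenerate (the modes $J_{(n)}$ act freely on $\wun$), so in particular $\pi_J$ is a simple vertex algebra.

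Next I would verify that $\mathcal{I}$ is a proper ideal. Since $M$ is a commutative (differential) vertex algebra, the augmentation $\xi_{\pm}\mapsto0$ defines a vertex algebra homomorphism $\epsilon\colon M\to\C$, and composing $\eta$ with $(\id_{\mathcal{A}_{\phi}}\otimes\epsilon)\circ q$ yields a vertex algebra homomorphism $\Phi\colon\W^{\kk_c}(\sll_{2|1})\to\mathcal{A}_{\phi}$ with $\Phi(J)=\phi\phi^{*}$ and $\Phi(G^{\pm})=0$. Thus $\mathcal{I}\subset\Ker\Phi$ while $\Phi(\wun)=\wun\neq0$, so $\wun\notin\mathcal{I}$ and $\W^{\kk_c}(\sll_{2|1})/\mathcal{I}\neq0$. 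A surjection from the simple vertex algebra $\pi_J$ onto a nonzero vertex algebra is an isomorphism, so $\W^{\kk_c}(\sll_{2|1})/\mathcal{I}\simeq\pi_J$.

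Finally I would promote $\mathcal{I}$ to the maximal graded ideal. Because $\W^{\kk_c}(\sll_{2|1})$ is $\tfrac12\Z_{\geq0}$-graded with degree-zero part $\C\wun$, no proper graded ideal meets degree $0$, so the sum of two proper graded ideals is again proper; hence there is a unique maximal graded ideal $\mathcal{I}_{\max}\supseteq\mathcal{I}$. If the inclusion were strict, $\W^{\kk_c}(\sll_{2|1})/\mathcal{I}_{\max}$ would be a nonzero proper quotient of $\W^{\kk_c}(\sll_{2|1})/\mathcal{I}\simeq\pi_J$, contradicting the simplicity of $\pi_J$; therefore $\mathcal{I}=\mathcal{I}_{\max}$ and $\W_{\kk_c}(\sll_{2|1})=\W^{\kk_c}(\sll_{2|1})/\mathcal{I}_{\max}\simeq\pi_J$. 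The only point needing an external input is the properness of $\mathcal{I}$ — equivalently, that $\wun$ does not lie in the ideal generated by $G^{\pm}$ — and the free field realization of $V^{\kk_c}(\gl_{1|1})$ delivers exactly this; everything else is formal.
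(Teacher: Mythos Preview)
Your proof is correct. It differs from the paper's argument in structure and in the amount of external input you invoke. The paper works directly inside the graded simple quotient: since $S$ is central of positive conformal degree, it must act as a scalar and hence vanish; the paper then asserts that $G^\pm$ become ``central odd elements'' and concludes $G^\pm=0$ by the same reasoning, leaving only $J$. Your approach instead builds the ideal $\mathcal{I}=\langle G^+,G^-\rangle$ from below, shows $S\in\mathcal{I}$, exhibits a nonzero quotient using the free field realization of $V^{\kk_c}(\gl_{1|1})$ composed with the augmentation $\xi_\pm\mapsto 0$, identifies the quotient with the simple Heisenberg $\pi_J$, and then deduces that $\mathcal{I}$ is the maximal graded ideal.

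What each buys: the paper's argument is shorter and needs no free field realization, but its key step is imprecise as written --- after $S=0$ one still has $[J_\lambda G^\pm]=\pm G^\pm$, so $G^\pm$ are \emph{not} literally central, and the ``same consideration'' does not apply verbatim. Your argument supplies exactly the missing rigor at that point: the map $\Phi\colon \W^{\kk_c}(\sll_{2|1})\to\mathcal{A}_\phi$ with $\Phi(G^\pm)=0$ and $\Phi(J)=\phi\phi^*$ certifies that $\mathcal{I}$ is proper without any appeal to centrality of $G^\pm$. The cost is that you import Proposition~\ref{Wakimoto realization for principal sl(2|1)} and the realization \eqref{realization of affine gl11 at critical level}, whereas the paper's intended argument is internal to the OPE structure.
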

\begin{proof}
In the graded simple quotient $\W_{\kk_c}(\sll_{2|1})$, $S$ is of degree 2 or $S=0$. Since $S$ is an central element, it must act as a scalar in a simple quotient, and thus $S=0$.
Then, $G^{\pm}$ are central odd elements in $\W_{\kk_c}(\sll_{2|1})$ and thus $G^{\pm}=0$ holds by the same consideration. Hence, $\W_{\kk_c}(\sll_{2|1})$ is generated by $J$.
\end{proof}

The next result shows that $\W^{\kk}(\sll_{2|1})$  has also a unique non-graded simple quotient on which $S$ acts by non-zero scalar.

\begin{corollary}\label{non-graded simple quotient of W} 
If the central element $S$ acts on a simple quotient of $\W^{\kk}(\sll_{2|1})$ by a non-zero scalar, then the quotient is isomorphic to the $bc$-system $\mathcal{A}_\phi$.
\end{corollary}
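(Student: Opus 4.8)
The plan is to transport everything to $V^{\kk_c}(\gl_{1|1})$ via the isomorphism $\eta$ of Proposition~\ref{Wakimoto realization for principal sl(2|1)} and to use the free field realization \eqref{realization of affine gl11 at critical level}. Let $V$ be a simple quotient of $\W^{\kk_c}(\sll_{2|1})$ on which $S$ acts by a scalar $s\in\C^\times$, with quotient map $\pi$. First, $G^{+}\neq 0$ and $G^{-}\neq 0$ in $V$, since otherwise $S=G^{+}_{(0)}G^{-}$ would vanish. Put $\psi:=\pi(G^{+})$ and $\psi^{*}:=s^{-1}\pi(G^{-})$; from $\lm{G^{+}}{G^{-}}=S$ and $\lm{G^{\pm}}{G^{\pm}}=0$ one gets $\lm{\psi}{\psi^{*}}=\wun$ and $\lm{\psi}{\psi}=\lm{\psi^{*}}{\psi^{*}}=0$, i.e.\ the $bc$-relations. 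Hence there is a vertex superalgebra homomorphism $\alpha\colon\mathcal{A}_{\phi}\to V$ with $\phi\mapsto\psi$, $\phi^{*}\mapsto\psi^{*}$; it is nonzero, and since $\mathcal{A}_{\phi}$ is simple it is injective. Writing $\mathcal{A}:=\alpha(\mathcal{A}_{\phi})\subseteq V$, it remains to show $\pi(J)\in\mathcal{A}$, for then $\alpha$ is onto and $V\simeq\mathcal{A}_{\phi}$.

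Next I would isolate the obstruction. Let $x:=\psi\psi^{*}$ be the (normally ordered) charge current of $\mathcal{A}$. Using only the critical-level brackets $\lm{J}{J}=\lambda$, $\lm{J}{G^{\pm}}=\pm G^{\pm}$, $\lm{G^{+}}{G^{-}}=S$ and the non-commutative Wick formula, one checks $\lm{x}{\psi}=\psi$, $\lm{x}{\psi^{*}}=-\psi^{*}$, $\lm{x}{x}=\lambda$ and $\lm{\pi(J)}{x}=\lambda$; consequently $\mathbf{c}:=\pi(J)-x$ satisfies $\lm{\mathbf{c}}{\psi}=\lm{\mathbf{c}}{\psi^{*}}=0$ and $\lm{\mathbf{c}}{\mathbf{c}}=0$. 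Since $\W^{\kk_c}(\sll_{2|1})$ is strongly generated by $J,S,G^{\pm}$ and $\pi(S)=s\wun$, the quotient $V$ is generated by $\mathcal{A}$ together with $\pi(J)=x+\mathbf{c}$, hence by $\mathcal{A}$ and $\mathbf{c}$; as $\mathbf{c}$ commutes with $\psi,\psi^{*}$ and with itself, we get $\mathbf{c}\in\cent(V)$. A short computation with \eqref{realization of affine gl11 at critical level} moreover gives, in $V^{\kk_c}(\gl_{1|1})\simeq\W^{\kk_c}(\sll_{2|1})$, the identity $G^{+}G^{-}=JS+S_{1}$ (normally ordered products), where $S_{1}$ is the central element corresponding to the generator $(\partial\xi_{+})\xi_{-}$ of $M_{0}$ in \eqref{strong generators at wt=0}; evaluating in $V$ (where $S=s\wun$) yields $x=s^{-1}\pi(G^{+}G^{-})=\pi(J)+s^{-1}\pi(S_{1})$, so $\mathbf{c}=-s^{-1}\pi(S_{1})$ is the image of a central element. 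Finally, since $\mathcal{A}_{\phi}$ is simple, the multiplication map $\mathcal{A}\otimes\langle\mathbf{c}\rangle\to V$ (with $\langle\mathbf{c}\rangle$ the vertex subalgebra generated by $\mathbf{c}$) has kernel of the form $\mathcal{A}\otimes I$, whence $V\simeq\mathcal{A}_{\phi}\otimes B$ with $B:=\langle\mathbf{c}\rangle/I$ commutative, $\cent(V)=B$, and $B$ simple because $V$ is.

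The remaining, and main, task is to show $B=\C$, equivalently that $\pi(S_{1})$ is a scalar, so that $\pi(J)=x+\mathbf{c}\in\mathcal{A}$. For this I would invoke Theorem~\ref{FF center for gl11}: $\cent(V^{\kk_c}(\gl_{1|1}))\simeq M_{0}$, and under \eqref{realization of affine gl11 at critical level} this identifies the center with the ``rank $\leq 1$'' invariant jet algebra $\C[\partial^{n}\xi_{+},\partial^{n}\xi_{-}]^{U(1)}$, in which $S\leftrightarrow\xi_{+}\xi_{-}$ and $S_{1}\leftrightarrow(\partial\xi_{+})\xi_{-}$. The hypothesis $S\mapsto s\in\C^{\times}$ restricts attention to the locus $\{\xi_{+}\xi_{-}=s\}$, which is a single free $U(1)$-orbit; the point is that, together with the $\partial$-action and the simplicity of $B$, this forces the image of $M_{0}$ in $B$ down to $\C$---concretely, that $\pi$ factors through a central character $\chi_{r}$ of $\W^{\kk_c}(\sll_{2|1})$ with $\chi_{r}(S)=s$, for which $\mathbf{c}=-s^{-1}\chi_{r}(S_{1})=-r\wun$. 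Then $\pi(J)=x-r\wun\in\mathcal{A}$ and $V\simeq\mathcal{A}_{\phi}$.

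I expect this last step to be the genuine difficulty. The formal facts ``$\mathbf{c}$ is central, $V\simeq\mathcal{A}_{\phi}\otimes(\text{simple commutative vertex algebra})$'' do not by themselves pin down $B$ (a priori a $\partial$-stable quotient of the differential polynomial ring in $\mathbf{c}$ could be a nontrivial simple commutative vertex algebra), so one must exploit the explicit Segre-type relations and $\partial$-action on $M_{0}$---equivalently, one must show that for every such $V$ the whole center $\cent(\W^{\kk_c}(\sll_{2|1}))$ acts by scalars.
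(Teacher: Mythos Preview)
Your argument coincides with the paper's: after normalizing so that $S$ acts as $\wun$, one notes that $G^\pm$ generate a copy of $\mathcal{A}_\phi$ in the quotient and that $\mathbf{c}=\pi(J)-\psi\psi^*$ lies in the center. At exactly this point the paper simply writes ``Hence $\W_{\kk_c}^{S\neq0}(\sll_{2|1})$ is strongly generated by $G^{\pm}$'' and stops, tacitly assuming that a central element of a simple vertex superalgebra is a scalar. Your detour through the identity $G^+G^-=JS+S_1$ is correct but does not help here: it only re-expresses $\mathbf{c}$ as the image of an element of $\cent(\W^{\kk_c}(\sll_{2|1}))$, which you already knew.

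Your instinct that this last step is the real issue is right, and in fact it cannot be carried out: the statement as literally written is false. Take $B=\C[x]$ with derivation $\partial x=1$; this is a \emph{simple} commutative vertex algebra, since a $\partial$-stable principal ideal $(p)\subset\C[x]$ forces $p\mid p'$ and hence $p$ constant. Because $\W^{\kk_c}(\sll_{2|1})\simeq V^{\kk_c}(\gl_{1|1})$ is freely strongly generated, the assignment $G^+\mapsto\phi$, $G^-\mapsto\phi^*$, $S\mapsto\wun$, $J\mapsto\phi\phi^*+x$ extends to a surjection $\W^{\kk_c}(\sll_{2|1})\twoheadrightarrow\mathcal{A}_\phi\otimes B$; the target is simple since $\mathcal{A}_\phi$ is simple with only scalar vacuum-like vectors, so every ideal of $\mathcal{A}_\phi\otimes B$ has the form $\mathcal{A}_\phi\otimes I$. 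This gives a simple quotient on which $S$ acts by $1$ but which is not isomorphic to $\mathcal{A}_\phi$. So neither your proposed use of the relations in $M_0$ nor any other device can close the gap. The corollary is presumably intended under an implicit hypothesis---e.g.\ that the full center acts by a character, or that the quotient carries a lower-bounded Hamiltonian grading---and under any such hypothesis both the paper's argument and yours finish immediately, since then the simple commutative factor $B$ collapses to $\C$.
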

\begin{proof}
Let us denote such a quotient by $\W_{\kk_c}^{S\neq0}(\sll_{2|1})$.
By rescaling the generator $G^-$ if necessary, we can assume that $S$ acts as the identity without loss of generality.  Let $\widetilde W =   \W^{\kk_c}(\sll_{2|1}) / \langle S -\wun \rangle$.  Then $\widetilde W$ is a vertex superalgebra in which  $G^{\pm}$ generate the $bc$-system $\mathcal{A}_\phi$. 
As $H= G^+ G^-$ is a Heisenberg vector in  $\widetilde W$ such that
$$ [H _{\lambda} G^{\pm}] = \pm G^{\pm},  \ [H _{\lambda} H] = \lambda,  \   [H _{\lambda} J ]  = {\lambda},  $$
$J-H$ belongs to the center $\cent(\widetilde W)$. 
Hence, $\W_{\kk_c}^{S\neq0}(\sll_{2|1})$ is strognly generated by $G^{\pm}$ and thus isomorphic to $\mathcal{A}_\phi$. 
\end{proof}
 
\subsection{Kazama-Suzuki duality}
By combining the isomorphisms in Proposition \ref{Wakimoto realization for principal sl(2|1)} and Theorem \ref{FF center for gl11}, we obtain the following realization of the center
\begin{center}
\begin{tikzcd}[ column sep = huge]
			\W^{\kk_c}(\sll_{2|1})
			\arrow[r, hook]&
			\mathcal{A}_\phi\otimes M\\
		      \cent(\W^{\kk_c}(\sll_{2|1})) 
			\arrow[r, " \simeq "] \arrow[u,symbol=\subset]&
			M_0 \arrow[u,symbol=\subset].
\end{tikzcd}
\end{center}
This realization is recovered by taking the large level limit of the Kazama--Suzuki coset construction \cite{A-1999, CL3, KaSu} of $\W^\kk(\sll_{2|1})$ at non-critical levels $\kk\neq \kk_c$, known also as the $\mathcal{N}=2$ superconformal algebra.

Let $V^\ell(\sll_2)$ be the universal affine vertex algebra assocaited with $\sll_2$ at level $\ell\in \C$ generated by the $\sll_2$-triple $E,H,F$ and consider the diagonal Heisenberg vertex algebra $\pi^{H_\Delta} \subset\mathcal{A}_\phi\otimes V^\ell(\sll_2)$ generated by 
$H_\Delta=H-2\phi\phi^*$.
Then the Kazama--Suzuki coset construction asserts an isomorphism
\begin{align}\label{Kazama-Suzuki}
\W^\kk(\sll_{2|1}) \xrightarrow{\simeq} \Com\left(\pi^{H_\Delta},\mathcal{A}_\phi\otimes V^\ell(\sll_2)\right)
\end{align}
where the levels $k,\ell$ satisfies the relation
\begin{align}\label{duality level}
(k+1)(\ell+2)=1.
\end{align} 
The explicit embedding is given by
\renewcommand{\arraystretch}{1.3}
\begin{align*}
G^+, G^- &\mapsto  (k+1)\phi E,\ -(k+1)\phi^* F,\\
J & \mapsto  -(2k+1) \phi\phi^*+(k+1) H,\\
S & \mapsto  -(k+1)^2EF+\frac{1}{2}(k+1)^2 \pd H-(k+1)^2\phi \phi^*H\\
& \hspace{1cm}-\frac{1}{2}(k+1) (2k+1)(\phi^*\pd \phi +\phi \pd \phi^*{}).
\end{align*}
\renewcommand{\arraystretch}{1}

One may extend the map to the critical level $\kk=\kk_c$ (i.e.,\ $k=-1$) by considering the large level limit $\ell \rightarrow \infty$ as suggested by \eqref{duality level}. 
To make it precise, we start with the universal affine vertex algebra $V_{R^\circ}(\sll_2)$ over the ring $R^\circ=\C[(\sigma+2)^\pm]$ so that 
\begin{align*}
    V_{R^\circ}(\sll_2)\otimes_{R^\circ} \C_\ell \simeq V^\ell(\sll_2),\qquad (\ell\in \C \backslash \{-2\}) 
\end{align*}
where $\C_\ell$ is the 1-dimensioanal $R^\circ$-module such that $\sigma$ acts by $\ell$. 
Then we consider the subalgebra $V_R^\infty(\sll_2)$ generated by 
\begin{align*}
\frac{1}{\sigma+2}E,\quad \frac{1}{\sigma+2}H,\quad \frac{1}{\sigma+2}F
\end{align*}
over the ring $R=\C[(\sigma+2)^{-1}]$. The large level limit of $V^\ell(\sll_2)$ is by definition the specialization $V^\infty(\sll_2)=V_R^\infty(\sll_2)/((\sigma+2)^{-1})$.
It is an algebra of differential polynomials in the variables 
\begin{align*}
\overline{E}=\left[\frac{1}{\sigma+2}E\right],\quad \overline{H}=\left[\frac{1}{\sigma+2}H\right],\quad \overline{F}=\left[\frac{1}{\sigma+2}F\right]
\end{align*}
Then it is straightforward to show that \eqref{Kazama-Suzuki} induces the the Kazama--Suzuki coset construction at the critical level.
\begin{proposition}
There is an embedding of vertex superalgebras 
\begin{align*}
\begin{array}{ccc}
\W^{\kk_c}(\sll_{2|1}) &\hookrightarrow &\mathcal{A}_\phi\otimes V^\infty(\sll_2)\\
G^+,G^- &\mapsto &\phi \overline{E},\ -\phi^* \overline{F}\\
J,S &\mapsto &  \overline{H}+\phi\phi^*,\ -\overline{E}{} \overline{F}.
\end{array}
\end{align*}
\end{proposition}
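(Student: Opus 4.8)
The plan is to obtain the critical-level embedding as a specialization of the non-critical Kazama–Suzuki embedding \eqref{Kazama-Suzuki}. First I would set things up over the base ring $R^\circ=\C[(\sigma+2)^\pm]$: form the complex $C^\bullet_{DS}$ with coefficients in $V_{R^\circ}(\sll_{2|1})$ (or equivalently work with the $C^\bullet_0$ model), so that the generators $J,S,G^\pm$ of $\W_{R^\circ}(\sll_{2|1})$ and all structure constants become elements of (matrices over) $R^\circ$ via the substitution $k+1=(\sigma+2)^{-1}$ coming from \eqref{duality level}. The key observation is that the explicit images under \eqref{Kazama-Suzuki}, after dividing by the appropriate powers of $(k+1)=(\sigma+2)^{-1}$, land in $\mathcal{A}_\phi\otimes V_R^\infty(\sll_2)$: indeed $G^\pm\mapsto (k+1)\phi E=\phi\cdot\tfrac{1}{\sigma+2}E$, $J\mapsto -(2k+1)\phi\phi^*+(k+1)H$, which we rewrite as $(k+1)(\tfrac{1}{\sigma+2}H) - (2k+1)\phi\phi^*$ — but here $2k+1=2(k+1)-1$, so $-(2k+1)=1-2(k+1)$, and since $(k+1)=(\sigma+2)^{-1}\in R$ this whole expression is in $\mathcal{A}_\phi\otimes V_R^\infty(\sll_2)$; similarly $S\mapsto -(k+1)^2 EF+\cdots$ is manifestly a polynomial in $(k+1)=(\sigma+2)^{-1}$ and the $\overline{E},\overline{H},\overline{F}$-type generators together with the $\phi,\phi^*$'s.

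Next I would specialize at $(\sigma+2)^{-1}=0$, i.e.\ pass to the quotient by the ideal $((\sigma+2)^{-1})$. On the target this is by definition the passage to $\mathcal{A}_\phi\otimes V^\infty(\sll_2)$ with $E,H,F$ replaced by $\overline{E},\overline{H},\overline{F}$. On the source, the structure constants of $\W_{R^\circ}(\sll_{2|1})$ — reexpressed via $k+1=(\sigma+2)^{-1}$ — specialize exactly to the critical-level $\lambda$-brackets $\lm{J}{J}=\lambda$, $\lm{J}{G^\pm}=\pm G^\pm$, $\lm{G^+}{G^-}=S$ with everything else zero, which are precisely the relations recorded for $\W^{\kk_c}(\sll_{2|1})$ in the excerpt. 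Setting $(k+1)=0$ in the images: $G^+\mapsto \phi\overline{E}$, $G^-\mapsto -\phi^*\overline{F}$, $J\mapsto \overline{H}+\phi\phi^*$ (the coefficient $-(2k+1)\to 1$ and $(k+1)\to 1$ after dividing out, more precisely $-(2k+1)=1-2(k+1)\to 1$ while the $(k+1)H$ term becomes $\overline{H}$), and $S\mapsto -\overline{E}\,\overline{F}$ (the $\pd H$, $\phi\phi^* H$, and $\phi^*\pd\phi$ terms all carry a surviving factor of $(k+1)$ or $(k+1)(2k+1)$ that is divisible by $(k+1)^2$ or vanishes, except $-EF$ whose coefficient $(k+1)^2$ should be absorbed into the normalization $\overline{E}\,\overline{F}=(\tfrac{1}{\sigma+2}E)(\tfrac{1}{\sigma+2}F)$). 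This gives precisely the claimed map, and it is a vertex superalgebra homomorphism because it is a specialization of one over $R$.

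It remains to check injectivity. Here I would argue that the map is injective over the ring $R$ — being a base change of the injective map \eqref{Kazama-Suzuki} after localizing at a non-zero-divisor, the $R$-form $\W_R^\infty(\sll_{2|1})\to \mathcal{A}_\phi\otimes V_R^\infty(\sll_2)$ is injective with torsion-free (indeed free) cokernel on each graded piece, because both sides are free $R$-modules and the generic fiber map is injective — and then injectivity is preserved under the specialization $(\sigma+2)^{-1}\mapsto 0$. Alternatively, and perhaps more cleanly, injectivity can be deduced directly from the structure of $\W^{\kk_c}(\sll_{2|1})$: its strong generators $J,G^\pm,S$ map to algebraically independent (in the differential-polynomial sense, given the free-field description of the target) elements $\overline{H}+\phi\phi^*$, $\phi\overline{E}$, $\phi^*\overline{F}$, $\overline{E}\,\overline{F}$, and a PBW/leading-term argument on the associated graded shows no nontrivial normally ordered polynomial relation can hold.

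The main obstacle I expect is the bookkeeping of the specialization: one must be careful that every term in the image of $S$ (and of $J$) genuinely survives or vanishes as claimed once the $(k+1)$-dependence is rewritten in terms of $(\sigma+2)^{-1}$ and one divides by the correct power before setting it to zero — in particular that the normalization $\overline{E}=[\tfrac{1}{\sigma+2}E]$ etc.\ is consistent across the $-(k+1)^2EF$, $\tfrac12(k+1)^2\pd H$, and $-(k+1)^2\phi\phi^*H$ terms, and that the odd terms proportional to $(k+1)(2k+1)$ drop out. Establishing that the $R$-linear map has free cokernel (so that injectivity passes to the special fiber) is the one genuinely nontrivial structural input; everything else is a direct transcription of \eqref{Kazama-Suzuki} and \eqref{duality level}.
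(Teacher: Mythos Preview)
Your approach—specializing the non-critical Kazama--Suzuki embedding \eqref{Kazama-Suzuki} over $R=\C[(\sigma+2)^{-1}]$ via $k+1=(\sigma+2)^{-1}$ and reading off the limit $(\sigma+2)^{-1}\to 0$—is precisely what the paper intends; it simply asserts that ``it is straightforward to show that \eqref{Kazama-Suzuki} induces'' the claimed map and does not spell out the bookkeeping you provide. For injectivity, note that the commutative square displayed immediately after the proposition factors the composite $\W^{\kk_c}(\sll_{2|1})\to\mathcal{A}_\phi\otimes V^\infty(\sll_2)\twoheadrightarrow\mathcal{A}_\phi\otimes M$ as the known embedding $q\circ\eta$, which is a cleaner route than either your free-cokernel or PBW argument.
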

Now, the above embedding recovers the embedding appearing in the beginning of this section through the following commutative diagram:
\begin{center}
\begin{tikzcd}[ column sep = huge]
			\W^{\kk_c}(\sll_{2|1})
			\arrow[d, "\eta"', " \simeq "]
			\arrow[r, hook]&
			\mathcal{A}_\phi\otimes V^\infty(\sll_2) \arrow[d, two heads, "\mathrm{id}\otimes \overline{\eta}"]
			\\
		      V^{\kk_c}(\gl_{1|1})
			\arrow[r, hook, "q"]&
			\mathcal{A}_\phi\otimes M
\end{tikzcd}
\end{center}
by setting a homomorphism of differential algebras 
\begin{align*}
\overline{\eta}\colon V^\infty(\sll_2) \rightarrow M,\quad \overline{E}\mapsto \xi_+,\ \overline{H}\mapsto 0,\ \overline{F}\mapsto -\xi_-.
\end{align*}
In this picture, the subalgebra $M_0$ is interpreted as $(V^\infty(\sll_2)/I_{V^\infty(\h)})^{\h}$ where $I_{V^\infty(\h)}$ is the ideal generated by $\pd^n \overline{H}$ ($n\geq0$) and the $\h$-action is given by the $\h$-weight.
By \cite{CL4}, R.H.S is identified with the large level limit of the parafermion vertex algebra $$K^\ell(\sll_2):=\Com(\pi^\ell_\h,V^\ell(\sll_2))$$
and the strong generators and their algebraic relations are studied using Weyl's invariant theory.
Similarly, let us introduce
$K^\ell(\gl_2):=\Com(\pi^\ell_{\h},V^\ell(\gl_2))$
where $\h$ is the Cartan subalgebra of $\sll_2\subset \gl_2$.
\begin{corollary}
There are isomorphisms of (commutative) vertex algebras 
\begin{align*}
&\cent(\W^{\kk_c}(\sll_{2|1}))\simeq K^\infty(\sll_2),\quad \cent(\W^{\kk_c}(\gl_{2|1}))\simeq K^\infty(\gl_2).
\end{align*}
\end{corollary}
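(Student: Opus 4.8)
The plan is to derive the Corollary by combining the Kazama--Suzuki realization at the critical level with the identification of $M_0$ with the parafermion limit. First I would recall from Proposition~\ref{Wakimoto realization for principal sl(2|1)} and Theorem~\ref{FF center for gl11} that $\cent(\W^{\kk_c}(\sll_{2|1}))\simeq \cent(V^{\kk_c}(\gl_{1|1}))\simeq M_0$, so it suffices to establish $M_0\simeq K^\infty(\sll_2)$ as vertex algebras. The commutative diagram displayed just before the Corollary shows that the embedding $\W^{\kk_c}(\sll_{2|1})\hookrightarrow \mathcal{A}_\phi\otimes V^\infty(\sll_2)$ intertwines $\eta$ with $\id\otimes\overline{\eta}$, where $\overline{\eta}\colon V^\infty(\sll_2)\to M$ sends $\overline{H}\mapsto 0$; hence the center $\cent(\W^{\kk_c}(\sll_{2|1}))$, sitting inside $\mathcal{A}_\phi\otimes V^\infty(\sll_2)$ as the image of $S\mapsto -\overline{E}\,\overline{F}$ together with its normally-ordered descendants, maps isomorphically onto $M_0\subset M$. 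The remaining identification $M_0\simeq (V^\infty(\sll_2)/I_{V^\infty(\h)})^{\h}$ follows because $\overline{\eta}$ has kernel exactly the differential ideal generated by the $\overline{H}$-descendants and is $\h$-equivariant for the weight grading, so it descends to an isomorphism on the $\h$-invariant quotient; this quotient is by \cite{CL4} precisely $K^\infty(\sll_2)$.

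For the $\gl_2$-statement I would use the decomposition $\W^{\kk_c}(\gl_{2|1})\simeq \W^{\kk_c}(\sll_{2|1})\otimes \pi^{\kk_c}_h$ recorded earlier, which at the critical level gives $\cent(\W^{\kk_c}(\gl_{2|1}))\simeq \pi^0_h\otimes\cent(\W^{\kk_c}(\sll_{2|1}))$ as in the second assertion of Proposition~\ref{Wakimoto realization for principal sl(2|1)}. On the parafermion side, the Heisenberg factor $\pi^\ell_h$ for the $\gl_1$-part of $\gl_2=\sll_2\oplus\C h$ is transverse to the Cartan $\h\subset\sll_2$ used to define the coset, so $K^\ell(\gl_2)=\Com(\pi^\ell_\h,V^\ell(\gl_2))\simeq \pi^\ell_h\otimes K^\ell(\sll_2)$; taking the large level limit $\ell\to\infty$ (in which $\pi^\ell_h$ becomes the commutative Heisenberg vertex algebra $\pi^0_h$) gives $K^\infty(\gl_2)\simeq \pi^0_h\otimes K^\infty(\sll_2)$. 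Matching the two tensor decompositions then yields $\cent(\W^{\kk_c}(\gl_{2|1}))\simeq K^\infty(\gl_2)$.

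The main obstacle is making the large level limit rigorous: one must check that the $R$-form $V^\infty_R(\sll_2)$ and its specialization behave compatibly with the coset construction, i.e.\ that $\Com(\pi^{H_\Delta},\mathcal{A}_\phi\otimes V_{R}^{\infty}(\sll_2))$ specializes at $(\sigma+2)^{-1}\mapsto 0$ to the image of $\W^{\kk_c}(\sll_{2|1})$ rather than something larger, and similarly that $K^\ell(\sll_2)$ admits an $R$-form whose specialization is $(V^\infty(\sll_2)/I_{V^\infty(\h)})^{\h}$. This is controlled by the fact that all the relevant objects are free as $R$-modules with bases independent of $\ell$ (flatness), so specialization commutes with taking invariants and cohomology; the key input is the cohomology vanishing $H^n_{DS}\simeq \delta_{n,0}\W^\kk(\gl_{2|1})$ uniformly in a Zariski-dense set of levels, which propagates to the limit. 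Once flatness is in place, the isomorphisms follow by comparing Hilbert--Poincaré series or by exhibiting explicit strong generators on both sides, as already carried out for $M_0$ in \cite{A} and for $K^\infty(\sll_2)$ in \cite{CL4}.
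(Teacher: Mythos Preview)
Your first two paragraphs reproduce the paper's argument essentially verbatim: identify $\cent(\W^{\kk_c}(\sll_{2|1}))$ with $M_0$ via Proposition~\ref{Wakimoto realization for principal sl(2|1)} and Theorem~\ref{FF center for gl11}, then identify $M_0$ with $(V^\infty(\sll_2)/I_{V^\infty(\h)})^{\h}$ through $\overline{\eta}$, and cite \cite{CL4} to recognize the latter as $K^\infty(\sll_2)$; the $\gl$-case follows from the tensor decomposition on both sides exactly as you wrote.

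Your third paragraph, however, manufactures difficulties that are not present. You do \emph{not} need to know that the Kazama--Suzuki coset $\Com(\pi^{H_\Delta},\mathcal{A}_\phi\otimes V^\infty(\sll_2))$ coincides with the image of $\W^{\kk_c}(\sll_{2|1})$: the identification $\cent(\W^{\kk_c}(\sll_{2|1}))\simeq M_0$ is obtained directly from $\eta$ and $q$, not by specializing a coset, so the commutative diagram is purely illustrative. Likewise the cohomology vanishing $H^n_{DS}\simeq\delta_{n,0}\W^\kk(\gl_{2|1})$ plays no role whatsoever here---the Drinfeld--Sokolov reduction does not enter this corollary. The only genuine limit statement you need is that $K^\infty(\sll_2)$, defined as the large level limit of $K^\ell(\sll_2)$, agrees with the concrete object $(V^\infty(\sll_2)/I_{V^\infty(\h)})^{\h}$; this is precisely the content borrowed from \cite{CL4}, and once it is in hand the corollary is an immediate concatenation of already-established isomorphisms. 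So drop the third paragraph entirely: it muddies an argument that is complete after your second paragraph.
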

\noindent

\section{Wakimoto realization}\label{sec: Wakimoto realization}
We introduce the Wakimoto realization for $V^{\kk}(\gl_{2|1})$ and $\W^{\kk}(\gl_{2|1})$ and then compare their centers at the critical level based on it.

Let $\pi^{\kk-\kk_c}_\h$ be the Heisenberg vertex algebras associated with $\h\subset \gl_{2|1}$ at level $\kk-\kk_c$. It is, by definition, generated by $\widehat{e}_1, \widehat{e}_2, \widehat{e}_3$ which satisfy the $\lambda$-bracket
\begin{align*}
    \lm{\widehat{e}_i}{\widehat{e}_j}=(\kappa-\kappa_c)(e_{i,i},e_{j,j})\lambda.
\end{align*}
For $\nu\in \h^*$, we denote by $\pi^{\kk-\kk_c}_{\h,\nu}$ the Fock module over $\pi^{\kk-\kk_c}$ of highest weight $\nu$. we write $\nu=\nu_1\alpha_1+\nu_2 \alpha_2$ by using the simple roots $\alpha_i$  whose root vectors are $e_{i,i+1}$. 

Let $\mathcal{A}$ be the vertex superalgebra generated by even fields $a,a^*$ and odd fields $\phi_i, \phi_i^*$ ($i=2,3$) which satisfy the $\lambda$-bracket
\begin{align*}
&\lm{a}{a^*}=1,\quad \lm{a}{a}=\lm{a^*}{a^*}=0,\quad\lm{\phi_i}{\phi_j^*}=\delta_{i,j}.
\end{align*}
The following can be checked by direct computation.
\begin{proposition}\label{Wakimoto for affine I}
There is an embedding of vertex superalgebras 
\begin{align*}
\rho \colon V^{\kk}(\gl_{2|1})\hookrightarrow \mathcal{A}\otimes \pi^{\kk-\kk_c}_\h
\end{align*}
which satisfies
\begin{align*}
&e_{1,2}\mapsto a,\quad e_{2,3}\mapsto \phi_2+a^*\phi_3,\quad e_{1,3}\mapsto \phi_3\\
&e_{1,1}\mapsto \widehat{e}_{1}-a^*a-\phi_3^*\phi_3,\quad 
e_{2,2}\mapsto \widehat{e}_{2}+a^*a-\phi_2^*\phi_2,\quad
e_{3,3}\mapsto \widehat{e}_{3}+\phi_2^*\phi_2+\phi_3^*\phi_3\\
&e_{2,1}\mapsto -a^*{}^2a-\phi_3^* \phi_2+a^* \phi_2^* \phi_2-a^* \phi_3^* \phi_3 + a^*(\widehat{e}_1-\widehat{e}_2)+ k \pd a^*\\
&e_{3,2}\mapsto \phi_3^*a+\phi_2^*(\widehat{e}_2+\widehat{e}_3)+(k+1)\pd \phi_2^*\\
&e_{3,1}\mapsto -\phi_3^*a^* a -\phi_3^* (\widehat{e}_1+\widehat{e}_3) +\phi_2^*\phi_3^* \phi_2 +a^*\phi_2^* (\widehat{e}_2+\widehat{e}_3)+(k+1) a^*+ \pd\phi_2^*-k \pd\phi_3^*.
\end{align*}
Moreover, for $k\neq -1$,  the image is contained in the kernel of the screening operators:
\begin{align*}
S_i=\int Y(e_{i,i+1}^R\mathrm{e}^{-\frac{1}{k+1}\alpha_i},z)\mathrm{d}z\colon \mathcal{A}\otimes \pi^{\kk-\kk_c}_\h\rightarrow \mathcal{A}\otimes \pi^{\kk-\kk_c}_{\h,-\alpha_i},\quad (i=1,2),
\end{align*}
with 
\begin{align}\label{coefficient of scr}
e_{1,2}^R=a+\phi_2^*\phi_3,\quad e_{2,3}^R=\phi_2.
\end{align}
\end{proposition}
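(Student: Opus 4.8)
The statement to prove is Proposition~\ref{Wakimoto for affine I}: the existence of the embedding $\rho\colon V^\kk(\gl_{2|1})\hookrightarrow \mathcal{A}\otimes\pi^{\kk-\kk_c}_\h$ with the listed formulas, plus the assertion that for $k\neq-1$ the image lies in the kernel of the screening operators $S_1,S_2$.

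\textbf{Plan of proof.} The proof is a direct computation verifying the $\lambda$-bracket relations, organized to minimize labor. First I would observe that the assignment $e_{i,j}\mapsto (\text{the stated expression})$ is $\Z$-graded: assign to $a,a^*,\phi_i,\phi_i^*$ and $\widehat e_i$ the weights dictated by the root-space decomposition of $\gl_{2|1}$ (so $e_{i,j}$ has weight $\epsilon_i-\epsilon_j$), and check the formulas are homogeneous of the correct weight. This already cuts down which brackets can be nonzero. Next, since $\gl_{2|1}$ is generated as a Lie superalgebra by the Chevalley-type generators $e_{1,2},e_{2,3},e_{3,2},e_{2,1}$ together with the Cartan $\widehat e_i$ (or even by fewer), it suffices to check: (i) the Cartan-Cartan brackets $\lm{e_{i,i}^\rho}{e_{j,j}^\rho}$ reproduce $\kk(e_{i,i},e_{j,j})\lambda$; (ii) the brackets of each Cartan element with each generator reproduce the correct root values with no anomalous $\lambda$-term; (iii) the brackets among the four chosen generators close correctly, in particular $\lm{e_{1,2}^\rho}{e_{2,1}^\rho}=e_{1,1}^\rho-e_{2,2}^\rho+\kk(e_{1,2},e_{2,1})\lambda$ and similarly for $\lm{e_{2,3}^\rho}{e_{3,2}^\rho}$, and that the non-adjacent pairs (e.g.\ $\lm{e_{1,2}^\rho}{e_{3,2}^\rho}$) bracket to the image of the appropriate commutator. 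The $bc$ and $\beta\gamma$ Wick formulas make each of these a finite, mechanical contraction; the $(k+1)\pd(\cdot)$ and $k\pd(\cdot)$ correction terms are exactly what is needed to absorb the double-contraction terms producing $\lambda^2$ and the single-derivative normal-ordering corrections. Finally, for the central term I would record that all the $\lambda^2$-coefficients cancel except where $\kk_c$-values are expected, and that the overall central term equals $\kk(\cdot,\cdot)\lambda$ with $\kk$ the form built from $k$; this pins down the level and shows $\rho$ factors through $V^\kk(\gl_{2|1})$. Injectivity is automatic once the map is a well-defined homomorphism: the associated graded map on the PBW filtration sends the generators to algebraically independent (super)symbols in $\gr(\mathcal{A}\otimes\pi^{\kk-\kk_c}_\h)$ — one sees $a,a^*,\phi_i,\phi_i^*,\widehat e_i$ are recovered from leading terms — so $\rho$ is injective.

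\textbf{Screening operators.} For the second assertion I would use the standard fact that the Wakimoto realization is characterized as $\bigcap_i \Ker S_i$, but here it suffices to show containment $\Im\rho\subseteq \bigcap_i\Ker S_i$. Since $S_i$ commutes with the translation operator and is an integral of a field, and since the image is strongly generated by the $e_{i,j}^\rho$, it is enough to check $S_i\cdot e_{p,q}^\rho=0$ for the (few) generators on which $S_i$ can act nontrivially — those whose weight differs from a weight present in $\mathcal{A}\otimes\pi^{\kk-\kk_c}_{\h,-\alpha_i}$ by the weight of $e_{i,i+1}^R\hwt{-\frac{1}{k+1}\alpha_i}$. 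Concretely one computes the $\lambda$-bracket of $e_{i,i+1}^R\hwt{-\frac{1}{k+1}\alpha_i}$ with each generator and extracts the residue; the choice \eqref{coefficient of scr} of $e_{1,2}^R=a+\phi_2^*\phi_3$ and $e_{2,3}^R=\phi_2$ is dictated precisely so that these residues vanish, using $\lm{\widehat e_i}{\hwt{\mu}}$-type contractions against the $-\frac{1}{k+1}\alpha_i$ exponent (this is where the hypothesis $k\neq-1$ enters, the screening momenta being singular at the critical level). I would spell this out for $e_{2,1}^\rho$ and $e_{3,1}^\rho$ versus $S_1$ and for $e_{3,2}^\rho$ versus $S_2$, the remaining cases being either weight-forbidden or analogous.

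\textbf{Main obstacle.} The conceptual content is light; the real difficulty is bookkeeping. The hardest single check is $\lm{e_{3,1}^\rho}{e_{1,3}^\rho}$ (equivalently the relations involving $e_{3,1}^\rho$, the longest expression), together with the Jacobi-type consistency $e_{3,1}^\rho=\lm{e_{3,2}^\rho}{e_{2,1}^\rho}$ up to normal-ordering rearrangements — here the odd fields $\phi_2^*\phi_3^*\phi_2$-type cubic terms and the derivative corrections interact, and keeping track of signs from the parity of $\phi_i$ (opposite to $e_{1,i}$) and of the $\Z/2$-grading in the Wick theorem is error-prone. I would handle this by first checking everything modulo the PBW filtration (where it is just a Lie superalgebra computation in $\gl_{2|1}$ acting on its Fock-type realization, essentially the classical limit), and then verifying that the quantum corrections — the terms with $\pd$ and the scalar multiples of $\lambda$ — match; organizing the computation this way isolates the genuinely new terms and makes the verification tractable.
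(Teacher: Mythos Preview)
Your proposal is correct and matches the paper's approach: the paper offers no proof beyond the sentence ``The following can be checked by direct computation'' preceding the proposition, so your organized plan---reducing to Chevalley-type generators, verifying the $\lambda$-brackets and level via Wick contractions, deducing injectivity from the associated graded, and checking the screening operators on generators---is already more detailed than what the paper provides. The one step you should be slightly more explicit about is injectivity (your claim that the free-field generators are recovered from leading terms is correct but deserves a line specifying the filtration, e.g.\ by total degree in $a^*,\phi_i^*$ or simply by comparing graded characters), but this is routine and your overall strategy is exactly what is intended.
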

Note that the association \eqref{coefficient of scr} extends to an embedding \begin{align*}
\rho^R\colon V^0(\nil_+) \hookrightarrow \mathcal{A}\otimes \pi^{\kk-\kk_c}_\h,\quad e_{1,2},e_{2,3},e_{1,3}\mapsto e_{1,2}^R, e_{2,3}^R, e_{1,3}^R
\end{align*}
with $e_{1,3}^R=\phi_3$. Here, $V^0(\nil_+)$ is the level 0 affine vertex superalgebra associated with the upper nilpotent subalgebra $\nil_+\subset \gl_{2|1}$. Note that the two embeddings
\begin{align}\label{left and right centralize}
\rho \colon V^0(\nil_+)\hookrightarrow \mathcal{A}\otimes \pi^{\kk-\kk_c}_\h \hookleftarrow V^0(\nil_+) \colon \rho^R
\end{align} 
centralize each other. 

\begin{lemma}\label{FF center goes Heis}
At the critical level $\kk=\kk_c$, we have 
\begin{align*}
\rho \colon \cent(V^{\kk_c}(\gl_{2|1})) \hookrightarrow \pi^{0}_\h \subset \mathcal{A}\otimes \pi^{0}_\h.
\end{align*}
\end{lemma}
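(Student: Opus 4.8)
The plan is to exploit the Wakimoto realization $\rho$ from Proposition \ref{Wakimoto for affine I} together with the mutual centralizer property \eqref{left and right centralize}. At the critical level $\kk=\kk_c$ we have $k=-1$, so the screening operators $S_i$ degenerate and are no longer available; instead the key observation is that the right embedding $\rho^R\colon V^0(\nil_+)\hookrightarrow \mathcal{A}\otimes\pi^0_\h$ still makes sense (it does not involve $k$), and by \eqref{left and right centralize} its image centralizes $\rho(V^{\kk_c}(\gl_{2|1}))$. Therefore any element of $\rho(\cent(V^{\kk_c}(\gl_{2|1})))$ commutes with everything in $\rho(V^{\kk_c}(\gl_{2|1}))$ and with everything in $\rho^R(V^0(\nil_+))$, i.e.\ with the subalgebra generated by both.

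First I would show that the joint subalgebra generated by $\rho(V^{\kk_c}(\gl_{2|1}))$ and $\rho^R(V^0(\nil_+))$ inside $\mathcal{A}\otimes\pi^0_\h$ contains the whole $bc\beta\gamma$-part $\mathcal{A}$. Concretely, $e_{1,2}\mapsto a$ and $e_{1,3}\mapsto \phi_3$ already give $a,\phi_3$; the screening-type generators $e_{1,2}^R=a+\phi_2^*\phi_3$ and $e_{2,3}^R=\phi_2$ then yield $\phi_2$ and $\phi_2^*\phi_3$; combining with $e_{2,3}\mapsto \phi_2+a^*\phi_3$ gives $a^*\phi_3$, and pairing against $\phi_3^*$-containing generators (coming e.g.\ from $e_{3,3}$, $e_{3,2}$, $e_{3,1}$ and $e_{1,3}^R=\phi_3$ via $\lambda$-brackets) one recovers $a^*$, $\phi_2^*$, $\phi_3^*$. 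Thus the centralizer of this joint subalgebra inside $\mathcal{A}\otimes\pi^0_\h$ is contained in $\Com(\mathcal{A},\mathcal{A}\otimes\pi^0_\h)=\pi^0_\h$, since $\mathcal{A}$ has trivial center. This forces $\rho(\cent(V^{\kk_c}(\gl_{2|1})))\subseteq \pi^0_\h$, which is exactly the claim.

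The main obstacle is the bookkeeping in the middle step: verifying that the $\lambda$-brackets among the listed generators of $\rho(V^{\kk_c}(\gl_{2|1}))$ and $\rho^R(V^0(\nil_+))$ really generate all six of $a,a^*,\phi_2,\phi_2^*,\phi_3,\phi_3^*$ as a vertex subalgebra. One has to be careful that at $k=-1$ some coefficients vanish (e.g.\ the $k\,\pd a^*$ and $k\,\pd\phi_3^*$ terms become $-\pd a^*$, $-\pd\phi_3^*$, and the $(k+1)$-terms drop out entirely), so the argument must use only the surviving terms; but since $\phi_3=e_{1,3}$ and $a=e_{1,2}$ are present unconditionally and $\phi_2=e_{2,3}^R$, the remaining fields are reached by taking $\lambda$-brackets of $e_{3,2}\mapsto \phi_3^*a+\phi_2^*(\widehat e_2+\widehat e_3)+0$ and $e_{3,1}$, $e_{2,1}$ against these, peeling off the Heisenberg pieces. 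An alternative, cleaner route to the same conclusion is to invoke a PBW/associated-graded argument: $\gr(\mathcal{A}\otimes\pi^0_\h)$ is a polynomial superalgebra, the joint image has associated graded containing the symbols of all generators of $\mathcal{A}$, hence the centralizer's associated graded lies in $\gr\pi^0_\h$, and one lifts back. Either way the conclusion $\rho(\cent(V^{\kk_c}(\gl_{2|1})))\hookrightarrow\pi^0_\h$ follows, and since $\pi^0_\h$ is commutative the image automatically lands in a commutative subalgebra, consistent with $\cent$ being commutative.
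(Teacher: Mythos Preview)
Your argument has a genuine gap at its very first step. You read \eqref{left and right centralize} as saying that $\rho^R(V^0(\nil_+))$ centralizes the \emph{entire} image $\rho(V^{\kk_c}(\gl_{2|1}))$, and from this you conclude that $\rho(\cent(V^{\kk_c}(\gl_{2|1})))$ commutes with $\rho^R(V^0(\nil_+))$. But \eqref{left and right centralize} only asserts that the two copies of $V^0(\nil_+)$ centralize each other, i.e.\ that $\rho^R(V^0(\nil_+))$ commutes with $\rho(V^0(\nil_+))$, not with the full affine image. In fact the stronger statement is false: for instance $e_{2,3}^R=\phi_2$ does not commute with $\rho(e_{3,2})=\phi_3^*a+\phi_2^*(\widehat e_2+\widehat e_3)$ at $k=-1$, since $[\phi_2{}_\lambda\phi_2^*(\widehat e_2+\widehat e_3)]=\widehat e_2+\widehat e_3\neq 0$. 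So there is no a priori reason why an element of $\rho(\cent)$, which only commutes with $\rho(V^{\kk_c}(\gl_{2|1}))$, should commute with $\rho^R(V^0(\nil_+))$; your whole ``joint subalgebra generates $\mathcal{A}$'' mechanism therefore never gets off the ground.

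The paper's argument avoids this by never asking $\rho(\cent)$ to commute with $\rho^R$. Instead it chooses a PBW basis of $\mathcal{A}\otimes\pi^0_\h$ built from $\widehat e_i$, the \emph{right} generators $e^R_{1,2},e^R_{2,3},e^R_{1,3}$, and $a^*,\phi_2^*,\phi_3^*$. The point of this basis is that $\rho(\nil_+)=\{a,\phi_2+a^*\phi_3,\phi_3\}$ commutes with the $\widehat e$'s and with the $e^R$'s (this is exactly what \eqref{left and right centralize} gives you), so commuting with $\rho(\nil_+)$ acts only on the $a^*,\phi^*$ part of a monomial and forces those factors to vanish. What remains is a differential polynomial in $\widehat e_i$ and $e^R_u$; then commutation with $\rho(\h)$ forces $\h$-weight $0$, and since every $e^R_u$ carries a strictly positive root weight, no $e^R$ factor can appear. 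Hence $\rho(\cent)\subset\pi^0_\h$. Your ``cleaner PBW/associated-graded route'' at the end is close in spirit to this, but as written it still leans on the incorrect claim about $\rho^R$ centralizing all of $\rho(V^{\kk_c}(\gl_{2|1}))$; if you rewrite it using only commutation with $\rho(\nil_+)$ and the $\h$-weight, you recover the paper's proof.
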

\proof
One can show the assertion by adapting, to our setting, the proof of \cite[Lemma 7.1]{F} for the simple Lie algebras.
Observe that the lexicographically ordered monomials of the form
\begin{align*}
\prod_{p_j<0 }\widehat{e}_{i_j,p_j} \prod_{q_j<0 }e^R_{u_j,q_j}\prod_{r_j\leq 0 }\phi^*_{v_j,r_j} \prod_{s_j\leq 0 }a^*_{s_j} \mathbf{1} 
\end{align*}
form a basis of $\mathcal{A}\otimes \pi^{0}_\h$. 
Since 
$\cent(V^{\kk_c}(\gl_{2|1}))\subset \Com(V^0(\nil_+), V^{\kk_c}(\gl_{2|1}))$, the formula in Proposition \ref{Wakimoto for affine I} for $e_{1,2}, e_{2,3}, e_{1,3}$ implies that $\cent(V^{\kk_c}(\gl_{2|1}))$ embeds into the subspace spanned by 
\begin{align*}
\prod_{p_j<0 }\widehat{e}_{i_j,p_j} \prod_{q_j<0 }e^R_{u_j,p_j} \mathbf{1}.
\end{align*}
It follows from $\cent(V^{\kk_c}(\gl_{2|1}))\subset \Com(V^{\kk_c}(\h), V^{\kk_c}(\gl_{2|1}))$ that $\cent(V^{\kk_c}(\gl_{2|1}))$ lies in the subspace of $\h$-weight 0. 
By considering the $\h$-weight for $e^R_{1}$, $e^R_{2}$, $e^R_{3}$, we conclude that $\cent(V^{\kk_c}(\gl_{2|1}))$ lies in the subspace spanned by $\prod_{p_j<0 }\widehat{e}_{i_j,p_j} \mathbf{1}$, that is, $\pi^0_\h$. This completes the proof.
\endproof

Next, we apply the quantum Hamiltonian reduction to the Wakimoto realization in Proposition \ref{Wakimoto for affine I} and obtain a homomorphism 
\begin{align*}
[\rho]\colon \W^\kk(\gl_{2|1})\rightarrow H_{DS}^\bullet(\mathcal{A}\otimes \pi^{\kk-\kk_c}_\h).
\end{align*}
Let us introduce the subalgebras 
$$\overline{\mathcal{A}}=\langle \phi_2, \phi_2^*\rangle,\quad \mathcal{A}^+=\langle a,a^*, \phi_3, \phi_3^*\rangle.$$
\begin{lemma}
    There is an isomorphism of vertex superalgebras
\begin{align*}
H_{DS}^n(\mathcal{A}\otimes \pi^{\kk-\kk_c}_\h)\simeq 
\delta_{n,0} \left(\overline{\mathcal{A}}\otimes \pi^{\kk-\kk_c}_\h \right).
\end{align*} 
\end{lemma}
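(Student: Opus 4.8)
The plan is to compute the cohomology $H_{DS}^\bullet(\mathcal{A}\otimes \pi^{\kk-\kk_c}_\h)$ by exploiting the same tensor-factorization of the BRST complex that was used for $V^\kk(\gl_{2|1})$ itself, now pulled back along the Wakimoto embedding $\rho$. First I would write out the BRST differential $d$ acting on $C_{DS}^\bullet(\mathcal{A}\otimes \pi^{\kk-\kk_c}_\h) = \mathcal{A}\otimes \pi^{\kk-\kk_c}_\h \otimes \bigwedge{}^\bullet$, using that $Q=(e_{1,2}+1)\gh_2^* - e_{1,3}\gh_3^*$ and that under $\rho$ we have $e_{1,2}\mapsto a$ and $e_{1,3}\mapsto \phi_3$. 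So the induced differential is controlled by $(a+1)\gh_2^* - \phi_3\gh_3^*$ together with the action of these fields on the rest of $\mathcal{A}$.

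The key observation is that the fields $a, a^*, \phi_3, \phi_3^*, \gh_2, \gh_2^*, \gh_3, \gh_3^*$ — i.e.\ $\mathcal{A}^+\otimes \bigwedge{}^\bullet$ — form a $d$-subcomplex, and that this subcomplex is acyclic with cohomology $\C$, while $\overline{\mathcal{A}}\otimes \pi^{\kk-\kk_c}_\h$ is exactly the $d$-closed, non-exact part. Concretely, I would exhibit a contracting homotopy on $\mathcal{A}^+\otimes\bigwedge{}^\bullet$. The differential pairs $\gh_2^*$ with the invertible field $(a+1)$ (invertibility of $a+1$ in the $bc$-completion is what makes this work — this is the standard de Rham–type acyclicity for a free $bc$-pair twisted by a unit), and pairs $\gh_3^*$ with $\phi_3$, contracting the $a,a^*$ and $\phi_3,\phi_3^*$ systems against the ghost pairs $\gh_2,\gh_2^*$ and $\gh_3,\gh_3^*$ respectively. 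This is precisely the mechanism behind the decomposition $C_{DS}^\bullet(V^\kk(\gl_{2|1}))\simeq C_+^\bullet \otimes C_0^\bullet$ with $H^\bullet(C_+^\bullet)\simeq \delta_{\bullet,0}\C$ recalled earlier from \cite{KW04}: under $\rho$, the ``$+$ part'' $C_+^\bullet$ generated by $e_{1,2},e_{1,3},\gh_2,\gh_3$ maps into $\mathcal{A}^+\otimes\bigwedge{}^\bullet$, and since the Wakimoto embedding is compatible with the screening/nilpotent structure (the two embeddings $\rho,\rho^R$ of $V^0(\nil_+)$ centralize each other, per \eqref{left and right centralize}), the tensor splitting is preserved. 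Then a Künneth argument gives $H_{DS}^n(\mathcal{A}\otimes\pi^{\kk-\kk_c}_\h)\simeq H^n(C_+^\bullet\text{-image})\otimes(\overline{\mathcal{A}}\otimes\pi^{\kk-\kk_c}_\h)\simeq \delta_{n,0}(\overline{\mathcal{A}}\otimes\pi^{\kk-\kk_c}_\h)$.

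Alternatively — and perhaps more cleanly — I would identify $\overline{\mathcal{A}}\otimes\pi^{\kk-\kk_c}_\h$ inside $C_0^\bullet$ after transport along $\rho$: the fields $\kw{e}_{1,1},\kw{e}_{2,2},\kw{e}_{3,3},\kw{e}_{2,3},\kw{e}_{3,2},\gh_2^*,\gh_3^*$ generating $C_0^\bullet$ pull back under $\rho$ (using the explicit formulas in Proposition \ref{Wakimoto for affine I}) into combinations of $\phi_2,\phi_2^*,\widehat e_i$ and the $\mathcal{A}^+$-fields; the subalgebra of $d$-cocycles modulo coboundaries is then generated by the images surviving the contraction, which assemble into $\overline{\mathcal{A}}=\langle\phi_2,\phi_2^*\rangle$ and $\pi^{\kk-\kk_c}_\h=\langle\widehat e_1,\widehat e_2,\widehat e_3\rangle$. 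One checks the resulting $\lambda$-brackets agree with those of $\overline{\mathcal{A}}\otimes\pi^{\kk-\kk_c}_\h$ by direct computation, and surjectivity onto $\overline{\mathcal{A}}\otimes\pi^{\kk-\kk_c}_\h$ follows from the explicit generators while injectivity follows from the cohomology-vanishing in positive degree together with a character/graded-dimension comparison.

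The main obstacle I anticipate is making the contracting homotopy genuinely work at the level of \emph{vertex} algebras rather than just filtered/associated-graded pieces: one must ensure the homotopy is compatible with the $\Z$-grading by ghost number and with a good filtration (e.g.\ by conformal weight, or the Li filtration), so that the spectral sequence of the double complex collapses and the associated-graded computation — where everything reduces to the classical statement that a $bc$-pair twisted by a unit has trivial cohomology — lifts to the vertex-algebra cohomology. Handling the ``$+1$'' shift in $Q=(e_{1,2}+1)\gh_2^*-\cdots$ correctly is the delicate point, since $a+1$ is not homogeneous; the standard fix is to work in the completion where $a+1$ is invertible (equivalently, to use the filtration in which the $+1$ is the leading term), exactly as in the Feigin–Frenkel proof of the quantum Hamiltonian reduction being an isomorphism. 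Once that is set up, the remaining verifications are routine $\lambda$-bracket computations.
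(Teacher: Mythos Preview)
Your approach is correct and essentially the same as the paper's: factor the complex as $(\overline{\mathcal{A}}\otimes\pi^{\kk-\kk_c}_\h)\otimes(\mathcal{A}^+\otimes\bigwedge{}^\bullet)$ with $d$ acting only on the second tensor factor via $\int Y((a+1)\gh_2^*-\phi_3\gh_3^*,z)\,\mathrm{d}z$, then invoke the (super) Poincar\'e lemma for $H^n_{DS}(\mathcal{A}^+)\simeq\delta_{n,0}\C$. The paper dispatches the acyclicity in one line without discussing the ``$+1$'' shift; your talk of invertibility of $a+1$ in a completion is a red herring --- the standard filtration argument you mention at the end (making the constant term leading) is all that is needed, and the paper simply takes this as known.
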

\proof
Since the differential $d$ in \eqref{BRST differential} is realized as 
\begin{align*}
d=\int Y((a+1)\gh_{2}^*- \phi_{3}\gh_{3}^* ,z) \mathrm{d}z
\end{align*}
by Proposition \ref{Wakimoto for affine I}, we have
\begin{align*}
H_{DS}^\bullet(\mathcal{A}\otimes \pi^{\kk-\kk_c}_\h)\simeq (\overline{\mathcal{A}}\otimes \pi^{\kk-\kk_c}_\h)\otimes H_{DS}^\bullet(\mathcal{A}^+).
\end{align*}
By using the (super) Poincar\'{e} lemma, it is straightforward to show 
$$H_{DS}^n(\mathcal{A}^+)\simeq \delta_{n,0}\C.$$
This completes the proof.
\endproof

Now, let us introduce the following elements 
\begin{align*}
    b_0=\widehat{e}_{1}+\widehat{e}_{2}+\widehat{e}_{3},\quad b_1=\widehat{e}_{1}-\widehat{e}_{2},\quad b_2=\widehat{e}_{2}+\widehat{e}_{3}.
\end{align*}
\begin{proposition}\label{Wakimoto for principal super I}
The homomorphism 
$$[\rho]\colon \W^\kk(\gl_{2|1})\rightarrow \overline{\mathcal{A}}\otimes \pi^{\kk-\kk_c}_\h$$
is an embedding and satisfies 
\begin{align}\label{cohomology class for principal super I}
\begin{split}
&h\mapsto b_0, \quad J=-(b_1+2b_2+x_2),\\
&S=(b_1+b_2)b_2-\frac{k+1}{2}\pd b_1-\pd b_2-\frac{k+1}{2}x_2^2-(k+1)\pd x_2,\\
&G^+=\phi_2,\\
&G^-=(b_1+b_2)b_2\phi_2^*+k \phi_2^* \pd b_2\\
&\hspace{1cm}+(k+1)(b_1+2b_2+x_2)\pd \phi_2^*+\frac{1}{2}(k+1)(2k+1)\pd ^2\phi_2^*
\end{split}
\end{align}
where $x_2=\phi_2\phi_2^*$.
Moreover, for $k\neq -1$,  the image is contained in the kernel of the following screening operators:
\begin{align*}
S_i=\int Y([e_i^R]\mathrm{e}^{-\frac{1}{k+1}\alpha_i},z)\mathrm{d}z\colon \overline{\mathcal{A}}\otimes \pi^{\kk-\kk_c}_\h\rightarrow \overline{\mathcal{A}}\otimes \pi^{\kk-\kk_c}_{\h,-\alpha_i}
\end{align*}
with 
\begin{align*}
[e_1^R]=\mathbf{1},\quad [e_2^R]=\phi_2.
\end{align*}
\end{proposition}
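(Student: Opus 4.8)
The plan is to realize $[\rho]$ by functoriality of the Drinfeld--Sokolov reduction, then read off its values on the strong generators \eqref{KW generators} by normalizing the resulting cocycles, and finally deal with injectivity and the screening operators separately. For the construction of $[\rho]$: I first note that $\rho$ of Proposition~\ref{Wakimoto for affine I} is a morphism of vertex superalgebras sending the BRST charge $Q=(e_{1,2}+1)\gh_2^*-e_{1,3}\gh_3^*$ to $(a+1)\gh_2^*-\phi_3\gh_3^*$, which by the Lemma preceding the statement is exactly the element whose residue is the differential on $C^\bullet_{DS}(\mathcal{A}\otimes \pi^{\kk-\kk_c}_\h)$. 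Hence $\rho\otimes\id$ is a morphism of BRST complexes, and since the cohomology of each side is concentrated in degree $0$ --- with $\W^\kk(\gl_{2|1})=\Ker d\subset C_0^\bullet$ on the left and $\overline{\mathcal{A}}\otimes \pi^{\kk-\kk_c}_\h$ on the right --- passing to $H^0$ produces the homomorphism $[\rho]$.

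To compute $[\rho]$ on $h,J,S,G^\pm$ I would use the decomposition $C^\bullet_{DS}(\mathcal{A}\otimes \pi^{\kk-\kk_c}_\h)\simeq (\overline{\mathcal{A}}\otimes \pi^{\kk-\kk_c}_\h)\otimes C^\bullet_{DS}(\mathcal{A}^+)$ together with the contracting homotopy behind $H^\bullet_{DS}(\mathcal{A}^+)\simeq\delta_{n,0}\C$ (the super Poincaré lemma): every degree-$0$ class then has a unique representative in $\overline{\mathcal{A}}\otimes \pi^{\kk-\kk_c}_\h$. Concretely, I substitute the Wakimoto formulas of Proposition~\ref{Wakimoto for affine I} into the expressions \eqref{KW generators} and eliminate every occurrence of $a,a^*,\phi_3,\phi_3^*$ and of the ghosts $\gh_2,\gh_2^*,\gh_3,\gh_3^*$ by adding coboundaries (using e.g.\ that $\phi_3$ is $d$-exact and $a$ is $d$-trivial modulo a constant). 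The cases $G^+=\kw{e}_{2,3}$ and $h=\kw{h}_0$ collapse immediately to $\phi_2$ and $b_0$, whereas $J,S,G^-$ require the coboundary corrections that generate the derivative terms and the coefficients $\tfrac{k+1}{2}$ and $\tfrac12(k+1)(2k+1)$ appearing in \eqref{cohomology class for principal super I}. As a consistency check one verifies that the resulting elements satisfy the $\lambda$-brackets of $\W^\kk(\gl_{2|1})$ recorded before Proposition~\ref{Wakimoto realization for principal sl(2|1)}, which reproves along the way that they generate a copy of $\W^\kk(\gl_{2|1})$.

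For injectivity, both $\W^\kk(\gl_{2|1})$ and $\overline{\mathcal{A}}\otimes \pi^{\kk-\kk_c}_\h$ carry PBW filtrations, and from \eqref{cohomology class for principal super I} the symbols of $[\rho](h),[\rho](J),[\rho](S),[\rho](G^\pm)$ are checked to remain algebraically independent in the associated graded $\C[\pd^n b_0,\pd^n b_1,\pd^n b_2]\otimes\bigwedge[\pd^n\phi_2,\pd^n\phi_2^*]$; one may, for instance, pass to leading terms in the $b$-degree, where $h\mapsto b_0$, $S\mapsto (b_1+b_2)b_2$, $G^-\mapsto (b_1+b_2)b_2\,\phi_2^*$, $G^+\mapsto\phi_2$, $J\mapsto -\phi_2\phi_2^*$, and verify $\{\pd^n((b_1+b_2)b_2)\}\cup\{\pd^n b_0\}$ is algebraically independent. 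Hence $\gr[\rho]$, and therefore $[\rho]$, is injective (alternatively, for non-critical $\kk$ injectivity is immediate from the Kazama--Suzuki realization \eqref{Kazama-Suzuki}, the critical case following by a flatness/specialization argument over the level). For the screening operators, $e_{i,i+1}^R$ and the Heisenberg exponential $\mathrm{e}^{-\frac{1}{k+1}\alpha_i}$ have vanishing $\lambda$-bracket with $\rho(Q)=(a+1)\gh_2^*-\phi_3\gh_3^*$ --- the right currents $\rho^R$ centralize the left currents by \eqref{left and right centralize}, the exponential lies in the Heisenberg factor, and all of these commute with the ghosts --- so each $S_i$ is a chain endomorphism of $C^\bullet_{DS}(\mathcal{A}\otimes \pi^{\kk-\kk_c}_\h)$ descending to $\overline{\mathcal{A}}\otimes \pi^{\kk-\kk_c}_\h$ with reduced coefficient $[a+\phi_2^*\phi_3]=\mathbf{1}$ for $i=1$ (since $\phi_2^*\phi_3$ is $d$-exact and $a$ reduces to the constant) and $[\phi_2]=\phi_2$ for $i=2$. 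Since $\Im\rho\subset\bigcap_i\Ker S_i$ by Proposition~\ref{Wakimoto for affine I}, applying $H^0$ gives $\Im[\rho]\subset\bigcap_i\Ker[S_i]$.

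The formal steps --- functoriality of the reduction, the chain-map property of the screenings, and the filtration argument for injectivity --- are essentially routine. The main obstacle is the normalization in the second step: carrying the cocycles $\rho(S)$ and especially $\rho(G^-)$ into their reduced form in $\overline{\mathcal{A}}\otimes \pi^{\kk-\kk_c}_\h$, where one must control normal-ordering rearrangements and the many derivative terms produced by the coboundary corrections. That is where the $k$-dependent coefficients get pinned down, and where a sign or a factor of $\tfrac12$ is easiest to lose.
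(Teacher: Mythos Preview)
Your approach is essentially that of the paper, whose proof is a single sentence pointing back to Proposition~\ref{Wakimoto for affine I} and the explicit generators \eqref{KW generators}; you have simply spelled out the functoriality of the reduction, the normalization via the contracting homotopy, the injectivity check, and the descent of the screening operators that the paper leaves implicit. One small slip in your injectivity aside: the leading $b$-degree term of $[\rho](J)=-(b_1+2b_2+x_2)$ is $-(b_1+2b_2)$, not $-\phi_2\phi_2^*$; with that correction the algebraic-independence check (of $b_0$, $b_1+2b_2$, $(b_1+b_2)b_2$ together with $\phi_2,\phi_2^*$) goes through as you indicate.
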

\proof
The proof is obtained straightforwardly by using Proposition \ref{Wakimoto for affine I} and the explicit presentation of the generators in \eqref{KW generators}.
\endproof
The following can be proven similarly to Lemma \ref{FF center goes Heis}.
\begin{lemma}\label{W-FF center goes Heis}
At the critical level $\kk=\kk_c$, we have 
\begin{align*}
[\rho] \colon \cent(\W^{\kk_c}(\gl_{2|1})) \hookrightarrow \pi^{0}_\h \subset \overline{\mathcal{A}}\otimes \pi^{0}_\h.
\end{align*}
\end{lemma}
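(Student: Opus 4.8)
Lemma \ref{W-FF center goes Heis} asserts that, at the critical level, the Wakimoto map $[\rho]$ carries $\cent(\W^{\kk_c}(\gl_{2|1}))$ into the Heisenberg subalgebra $\pi^0_\h \subset \overline{\mathcal{A}}\otimes \pi^0_\h$. The plan is to mimic, almost verbatim, the argument of Lemma \ref{FF center goes Heis}, now working inside $\overline{\mathcal{A}}\otimes \pi^{0}_\h$ rather than $\mathcal{A}\otimes \pi^0_\h$. The key structural input is the pair of mutually centralizing embeddings of $V^0(\nil_+)$-type data: on the "left" the generators $G^+ = \phi_2$ (the image of the reduction of $e_{2,3}$), and on the "right" the screening coefficients $[e_1^R] = \wun$, $[e_2^R] = \phi_2$ from Proposition \ref{Wakimoto for principal super I}. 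Since the center $\cent(\W^{\kk_c}(\gl_{2|1}))$ must in particular commute with $G^+$ and with the full Heisenberg field content carrying $\h$-weight, these two constraints will cut the image down to $\pi^0_\h$.

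First I would fix a PBW-type basis of $\overline{\mathcal{A}}\otimes \pi^{0}_\h$ by lexicographically ordered monomials
\begin{align*}
\prod_{p_j < 0} \widehat{e}_{i_j, p_j} \prod_{r_j \leq 0} (\phi_2^*)_{r_j} \wun,
\end{align*}
where we have already normally ordered so that the "annihilation-type" modes of $\phi_2$ and positive modes of $\widehat{e}_i$ act trivially on $\wun$; note $\overline{\mathcal{A}}=\langle \phi_2,\phi_2^*\rangle$ is a $bc$-system, so its Fock space is spanned by products of non-positive modes of $\phi_2^*$ (with the convention matching the $\lambda$-bracket $\lm{\phi_2}{\phi_2^*}=1$). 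Second, since $\cent(\W^{\kk_c}(\gl_{2|1})) \subseteq \Com(G^+, \W^{\kk_c}(\gl_{2|1}))$ and $[\rho](G^+)=\phi_2$, commuting with all modes of $\phi_2$ forbids any occurrence of $\phi_2^*$-modes: hence the image lands in the span of $\prod_{p_j<0}\widehat{e}_{i_j,p_j}\wun$, i.e.\ already in $\pi^0_\h$. (Alternatively, and this is the step that runs perfectly parallel to \cite[Lemma 7.1]{F} and to Lemma \ref{FF center goes Heis}, one argues with the right screening data $[e_i^R]$: the center must commute with the "right" copy $\rho^R$, which here is generated by $\wun$ and $\phi_2$, giving the same conclusion.) Third, one checks the residual $\h$-weight constraint: since $\cent(\W^{\kk_c}(\gl_{2|1})) \subseteq \Com(\pi^{\kk_c}_\h, \W^{\kk_c}(\gl_{2|1}))$ and $[\rho]$ intertwines the $\h$-gradings, the image sits in the $\h$-weight-zero subspace of $\pi^0_\h$, which is all of $\pi^0_\h$ since the Heisenberg generators $b_0, b_1, b_2$ are themselves of $\h$-weight zero — so this final step imposes nothing extra here but is included to make the analogy with Lemma \ref{FF center goes Heis} complete.

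The main obstacle, and the only place the argument is not literally a copy of the affine case, is verifying that the $bc$-system contribution genuinely decouples: one must be careful that the normally ordered corrections appearing in $[\rho](G^-)$ and $[\rho](S)$ in \eqref{cohomology class for principal super I} — the terms involving $x_2 = \phi_2\phi_2^*$ and $\pd\phi_2^*$ — do not conspire to land a central element outside $\pi^0_\h$ in some cancellation. This is handled by the commutation-with-$G^+=\phi_2$ filtration argument above: commuting with $\phi_2$ is a clean, grading-compatible operation on the $bc$-Fock space that kills every monomial containing a $\phi_2^*$-mode regardless of how it was assembled, so no cancellation can rescue such a term. Hence $[\rho](\cent(\W^{\kk_c}(\gl_{2|1}))) \subseteq \pi^0_\h$, and the lemma follows.
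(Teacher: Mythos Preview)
Your overall strategy is the right one and is exactly what the paper intends (it simply says ``similarly to Lemma~\ref{FF center goes Heis}''), but there is a real gap in the execution, and it sits precisely where you declare the third step to be vacuous.

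The PBW basis you write down for $\overline{\mathcal{A}}\otimes\pi^0_\h$ is incomplete: the fermionic $bc$-system $\overline{\mathcal{A}}=\langle\phi_2,\phi_2^*\rangle$ is \emph{not} spanned by products of $\phi_2^*$-modes alone. Its vacuum module requires creation modes of both $\phi_2$ and $\phi_2^*$; the correct analogue of the basis in Lemma~\ref{FF center goes Heis} is
\[
\prod_{p_j<0}\widehat{e}_{i_j,p_j}\;\prod_{q_j<0}[e^R_2]_{q_j}\;\prod_{r_j\leq 0}(\phi_2^*)_{r_j}\,\wun,
\]
where $[e_2^R]=\phi_2$ plays the role that the $e^R_{u}$ played before. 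Because of this omission, your claim that ``commuting with all modes of $\phi_2$ \dots\ hence the image lands in $\pi^0_\h$'' is false: since $\lm{\phi_2}{\phi_2}=0$, a monomial built only from $\widehat{e}$-modes and $\phi_2$-modes still commutes with $G^+=\phi_2$, so after this step you are left in $\pi^0_\h\otimes\langle\phi_2\rangle$, not in $\pi^0_\h$.

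The step that finishes the job is exactly the $\h$-weight argument you dismissed. The center commutes with $J$ (and $h$), and under $[\rho]$ the zero mode of $J=-(b_1+2b_2+x_2)$ acts on $\overline{\mathcal{A}}$ through $-(\phi_2\phi_2^*)_{(0)}$, which gives $\phi_2$ a nonzero charge (equivalently, $[e_2^R]=\phi_2$ carries $\h$-weight $\alpha_2$). Hence $\h$-weight zero forces all remaining $\phi_2$-modes to vanish, and only then does the image land in $\pi^0_\h$. With this correction your argument becomes precisely the paper's.
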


Since the center
\begin{align*}
\cent(V^{\kk_c}(\gl_{2|1}))\subset V^{\kk_c}(\gl_{2|1}) \subset C_{DS}^\bullet(V^\kk(\gl_{2|1}))
\end{align*}
lies in the center of the BRST complex, we have, in particular, $\cent(V^{k_c}(\sll_{2|1}))\subset \mathrm{Ker}\ d$.
Thus, we have the natural homomorphism
\begin{align*}
\iota\colon \cent(V^{\kk_c}(\gl_{2|1})) \rightarrow \cent(\W^{\kk_c}(\gl_{2|1}))\subset \W^{\kk_c}(\gl_{2|1}). 
\end{align*}
\begin{lemma}\label{FF center lies in W-center}
The map $\iota$ is injective.
\end{lemma}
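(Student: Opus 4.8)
The plan is to exploit the two Wakimoto-type free field realizations built in this section, which fit into a commutative square. Concretely, the quantum Hamiltonian reduction functor $H_{DS}^\bullet(-)$ sends the embedding $\rho\colon V^{\kk_c}(\gl_{2|1})\hookrightarrow \mathcal{A}\otimes \pi^{0}_\h$ to the embedding $[\rho]\colon \W^{\kk_c}(\gl_{2|1})\hookrightarrow \overline{\mathcal{A}}\otimes \pi^{0}_\h$, and the natural map $\iota$ is compatible with the projection $\mathcal{A}\otimes\pi^0_\h \to \overline{\mathcal{A}}\otimes\pi^0_\h$ induced on cohomology by the decomposition $\mathcal{A}\simeq \overline{\mathcal{A}}\otimes\mathcal{A}^+$ together with $H_{DS}^\bullet(\mathcal{A}^+)\simeq \C$. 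Thus I would first record the commutative diagram
\begin{center}
\begin{tikzcd}[column sep = huge]
\cent(V^{\kk_c}(\gl_{2|1})) \arrow[r, hook, "\rho"] \arrow[d, "\iota"'] & \pi^0_\h \arrow[d, equal]\\
\cent(\W^{\kk_c}(\gl_{2|1})) \arrow[r, hook, "{[\rho]}"] & \pi^0_\h
\end{tikzcd}
\end{center}
where the top horizontal arrow is the restriction of $\rho$, injective by Lemma \ref{FF center goes Heis}, the bottom horizontal arrow is the restriction of $[\rho]$, injective by Lemma \ref{W-FF center goes Heis}, and the right vertical arrow is the identity on $\pi^0_\h$ (the projection $\mathcal{A}\otimes\pi^0_\h\to\overline{\mathcal{A}}\otimes\pi^0_\h$ restricts to the identity on the Heisenberg factor $\pi^0_\h$).

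Once the square is seen to commute, injectivity of $\iota$ is immediate: if $\iota(u)=0$ for some $u\in\cent(V^{\kk_c}(\gl_{2|1}))$, then $[\rho](\iota(u))=0$, hence by commutativity (the identity on the right) $\rho(u)=0$ in $\pi^0_\h\subset\mathcal{A}\otimes\pi^0_\h$, and since $\rho$ is injective on $V^{\kk_c}(\gl_{2|1})$ (Proposition \ref{Wakimoto for affine I}), we get $u=0$. So the real content is the commutativity of the diagram, i.e.\ that the cohomology class of $\rho(u)$ computed inside $H_{DS}^\bullet(\mathcal{A}\otimes\pi^0_\h)\simeq \overline{\mathcal{A}}\otimes\pi^0_\h$ coincides with $[\rho](\iota(u))$.

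The main obstacle, and the step I would spend most care on, is exactly this compatibility between $\rho$, $[\rho]$ and the reduction map. By construction $[\rho]=H_{DS}^0(\rho)$, so the square commutes at the level of the full algebras $V^{\kk_c}(\gl_{2|1})\to\W^{\kk_c}(\gl_{2|1})$ mapping into $\mathcal{A}\otimes\pi^0_\h\to\overline{\mathcal{A}}\otimes\pi^0_\h$; what must be checked is that restricting to centers is consistent with the two different descriptions of where the centers land. Here one uses that for $u\in\cent(V^{\kk_c}(\gl_{2|1}))$ we already know $\rho(u)\in\pi^0_\h$ by Lemma \ref{FF center goes Heis}, so $\rho(u)$ is a genuine cocycle lying in the Heisenberg subalgebra, its cohomology class in $H_{DS}^0$ is represented by the same element of $\pi^0_\h$, and under the identification $H_{DS}^0(\mathcal{A}\otimes\pi^0_\h)\simeq\overline{\mathcal{A}}\otimes\pi^0_\h$ this element is unchanged because the Poincaré-lemma isomorphism $H_{DS}^\bullet(\mathcal{A}^+)\simeq\C$ is the identity on the factor $\C\subset\mathcal{A}^+$ and leaves the $\pi^0_\h$-factor alone. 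Comparing with the explicit formulas \eqref{cohomology class for principal super I} for $[\rho]$ on the generators $h, J, S, G^\pm$ then confirms that $[\rho](\iota(u))$ is the same element of $\pi^0_\h$ as $\rho(u)$, which is the desired commutativity. With that in hand the injectivity of $\iota$ follows as above.
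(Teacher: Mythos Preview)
Your proof is correct and follows essentially the same approach as the paper: the paper's argument is precisely the commutative diagram you wrote down, invoking Lemmas~\ref{FF center goes Heis} and~\ref{W-FF center goes Heis} together with Proposition~\ref{Wakimoto for principal super I}. You have simply spelled out in more detail why the right vertical map is the identity on $\pi^0_\h$ (via $[\rho]=H_{DS}^0(\rho)$ and the Poincar\'e-lemma identification), which the paper leaves implicit.
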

\proof
The assertion follows from the Wakimoto realizations for both hand-sides in Lemma \ref{FF center goes Heis}, \ref{W-FF center goes Heis} and Proposition \ref{Wakimoto for principal super I} thanks to the following commutative diagram:
    \begin{center}
		\begin{tikzcd}
			\cent(V^{\kk_c}(\gl_{2|1}))
			\arrow[d, hook, "\rho"']
			\arrow[r, "\iota"]&
			\cent(\W^{\kk_c}(\gl_{2|1})) \arrow[d, hook, "{[\rho]}"]
                \arrow[r,symbol=\subset]&
			\W^{\kk_c}(\gl_{2|1}) \arrow[d, hook, "{[\rho]}"]
			\\
		      \pi^0_{\h} \arrow[r, "\mathrm{id}"]&
			  \pi^0_{\h} \arrow[r,symbol=\subset]&
			\overline{\mathcal{A}}\otimes \pi^0_{\h}.
		\end{tikzcd}
	\end{center}
\endproof

\section{Feigin-Frenkel center of affine $\gl_{2|1}$}
\subsection{Segal--Sugawara vectors}
\label{Segal-Sugawara-vectors}
In \cite{MR}, a family of elements in $\cent(V^{\kk_c}(\gl_{m|n}))$ are constructed by exploiting a certain matrix-valued differential operator.
To construct them, let us introduce an element
\begin{align*}
    \pd+ \widehat{E}=\sum_{i,j}e_{i,j}\otimes (\delta_{i,j}\pd+ e_{i,j,-1})(-1)^{p(i)}
\end{align*}
in $R:=\End(\C^{m|n})\otimes U(\widehat{\gl}_{m|n}\rtimes \C\pd)$ where the second factor is the enveloping algebra of the Lie superalgebra $\widehat{\gl}_{m|n}\rtimes \C\pd$.
We use the Koszul sign rule for the product in $R$, namely, 
$$(a\otimes b)(a'\otimes b')=(-1)^{p(b) p(a')}aa'\otimes bb'$$
    for parity homogeneous elements $a,a',b,b'$, and denote by $\str$ the supertrace on the first factor:
\begin{align*}
    \str\colon R\rightarrow U(\widehat{\gl}_{m|n}\rtimes \C\pd).
\end{align*}
We compose it with the projection
 \begin{align*}
     U(\widehat{\gl}_{m|n}\rtimes \C\pd)\simeq U(\widehat{\gl}_{m|n})\otimes \C[\pd]\twoheadrightarrow V^\kk(\gl_{m|n})\otimes \C[\pd]
 \end{align*}
 and abuse the notation 
 \begin{align*}
    \str\colon R\rightarrow U(\widehat{\gl}_{m|n}\rtimes \C\pd)\twoheadrightarrow V^\kk(\gl_{m|n})\otimes \C[\pd].
\end{align*}
\begin{theorem}[\cite{MR}]\label{Segal-Sugawara vectors}
At the critical level $\kk=\kk_c$, the coefficients $s_{p,q}$ in the expansions 
\begin{align*}
    \str(\pd+\widehat{E})^p=s_{p,0}\pd^n+ s_{p,1}\pd^{p-1}+\dots+s_{p,p}
\end{align*}
for $p\geq0$ lie in the center $\cent(V^{\kk_c}(\gl_{m|n}))$.
\end{theorem}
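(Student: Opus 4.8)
The plan is to show that each coefficient $s_{p,q}$ lies in the center $\cent(V^{\kk_c}(\gl_{m|n}))$ by checking that it is annihilated by the action of all $e_{a,b,(r)}$ with $r\geq 0$; equivalently, by the standard vertex-algebra criterion, it suffices to show $[e_{a,b,(0)}, s_{p,q}]=0$ for all $a,b$ (the other modes follow by translation-covariance, since $s_{p,q}$ will have a definite conformal weight and $\cent$ is closed). The natural route is to work with the matrix-valued operator $\pd+\widehat{E}$ itself and prove the stronger statement that the power $\str(\pd+\widehat{E})^p$, viewed as an element of $V^{\kk_c}(\gl_{m|n})\otimes\C[\pd]$, is central, from which the centrality of each $s_{p,q}$ follows by comparing coefficients of $\pd$.

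First I would record the commutation relation between the zero-mode action $\ad\, e_{a,b,(0)}$ — which acts on $V^{\kk_c}(\gl_{m|n})$ as the adjoint action of $e_{a,b}\in\gl_{m|n}$ extended as a derivation — and the entries of $\pd+\widehat{E}$. Writing $\widehat{E}=\sum_{i,j} e_{i,j}\otimes e_{i,j,(-1)}(-1)^{p(i)}$, the key computation is that $[\,\ad e_{a,b,(0)}\otimes \id,\ \pd+\widehat{E}\,]$ equals a commutator in the matrix algebra $\End(\C^{m|n})$ of $\pd+\widehat{E}$ with the elementary matrix $e_{b,a}$ (up to a sign coming from the Koszul rule and the form of $\kk_c$), i.e.\ one gets an identity of the schematic shape
\begin{align*}
  \bigl[\,\ad e_{a,b,(0)},\ \pd+\widehat{E}\,\bigr]=\pm\bigl[\,e_{b,a}\otimes 1,\ \pd+\widehat{E}\,\bigr],
\end{align*}
where on the right the bracket is the (graded) commutator in $R$. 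Here the subtle point is that the ``derivative'' part $\pd$ and the diagonal-shift by $(m-n)$ present in $\kk_c$ must conspire so that no anomalous scalar term survives — this is exactly where the critical level is used, and it mirrors the classical Lie-algebra computation in Molev's work and in \cite{F}. This is the step I expect to be the main obstacle: getting the signs from $(-1)^{p(i)}$, the Koszul rule, and the supertrace to line up, and verifying that the central term $\kk_c(u,v)p\,\delta_{p+q,0}K$ contributes precisely the piece needed to cancel the non-derivation part of $\ad e_{a,b,(0)}$ acting on $e_{i,j,(-1)}$.

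Granting that identity, the rest is formal: since $\ad e_{a,b,(0)}$ is a derivation of the product in $R$ (with appropriate signs) and acts trivially on the $\End(\C^{m|n})$ and $\C[\pd]$ factors, we get
\begin{align*}
  \bigl[\,\ad e_{a,b,(0)},\ (\pd+\widehat{E})^p\,\bigr]=\pm\bigl[\,e_{b,a}\otimes 1,\ (\pd+\widehat{E})^p\,\bigr].
\end{align*}
Applying $\str$ and using its cyclicity (the graded trace kills graded commutators, $\str[X,Y]=0$ in $R$), we conclude $\ad e_{a,b,(0)}\bigl(\str(\pd+\widehat{E})^p\bigr)=0$ for all $a,b$. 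Extracting the coefficient of each power of $\pd$ gives $\ad e_{a,b,(0)}(s_{p,q})=0$, and since the $e_{a,b}$ span $\gl_{m|n}$ this says $s_{p,q}\in\Com(V^{\kk_c}(\gl_{m|n}),V^{\kk_c}(\gl_{m|n}))=\cent(V^{\kk_c}(\gl_{m|n}))$, using that each $s_{p,q}$ is homogeneous of conformal weight $q$ so that vanishing of the zero modes forces vanishing of all nonnegative modes. One should also note that a priori $\str(\pd+\widehat{E})^p$ lands in $U(\widehat{\gl}_{m|n})\otimes\C[\pd]$, but since $\widehat{E}$ involves only the $(-1)$-modes and $\pd$, the image under the projection to $V^{\kk_c}(\gl_{m|n})\otimes\C[\pd]$ is well-defined and the whole argument can be run there directly.
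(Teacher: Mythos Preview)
The paper does not supply its own proof of this statement; it is quoted from \cite{MR} and stated without argument. So there is no in-paper proof to compare against, only the original Molev--Ragoucy argument.

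Your sketch has a genuine gap at the very first reduction. An element $v\in V^{\kk_c}(\gl_{m|n})$ lies in the center iff $e_{a,b,(r)}v=0$ for \emph{all} $r\geq 0$, and invariance under the zero modes $e_{a,b,(0)}$ alone (i.e.\ $\g$-invariance) does not force this. The Sugawara vector at any non-critical level is a counterexample: it is $\g$-invariant but not central. Neither ``translation covariance'' nor homogeneity of weight closes the gap; weight considerations only kill $e_{a,b,(r)}v$ for $r>\wt(v)$, not for the intermediate range $0<r\leq \wt(v)$, and at the critical level there is no conformal vector to invoke anyway.

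This error is tied to a misidentification of where the critical level enters. For the zero mode one has $[e_{a,b,(0)},\pd]=0$ and $[e_{a,b,(0)},e_{i,j,(-1)}]=[e_{a,b},e_{i,j}]_{(-1)}$ with no central contribution whatsoever, so the identity $[\ad e_{a,b,(0)},\pd+\widehat{E}]=\pm[e_{b,a}\otimes 1,\pd+\widehat{E}]$ holds at every level and yields only $\g$-invariance of $\str(\pd+\widehat{E})^p$. The criticality of $\kk_c$ is needed precisely for the \emph{positive} modes: for $r\geq 1$ both $[e_{a,b,(r)},\pd]=r\,e_{a,b,(r-1)}$ and the central term $\kk(e_{a,b},e_{i,j})\,r\,\delta_{r,1}$ appear, and in the Molev--Ragoucy computation these two contributions combine into a matrix commutator only when $\kk=\kk_c$. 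The overall architecture you propose (rewrite the bracket as a matrix commutator, telescope through the power, kill it with supertrace cyclicity) is exactly right, but it must be run for all $r\geq 0$, and the level-dependent cancellation happens at $r=1$, not at $r=0$.
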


\begin{conjecture}[\cite{MR}] \label{mol-rag} 
The family $\{ \partial ^r s_{p,p} \}$, for $r \ge 0$, $p >0$ generate  $\cent(V^{\kk_c}(\gl_{m|n}))$.
\end{conjecture}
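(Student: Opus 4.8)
The plan is to prove Conjecture \ref{mol-rag} for $\g=\gl_{2|1}$ (equivalently $\sll_{2|1}$) by combining the abstract isomorphisms already in hand with a dimension count matching the Segal--Sugawara vectors against the full center. First, I would invoke the chain of isomorphisms $\cent(V^{\kk_c}(\gl_{2|1}))\xrightarrow{\iota}\cent(\W^{\kk_c}(\gl_{2|1}))\simeq \pi^0_h\otimes\cent(V^{\kk_c}(\gl_{1|1}))\simeq \pi^0_h\otimes M_0$ coming from Lemma \ref{FF center lies in W-center} and Proposition \ref{Wakimoto realization for principal sl(2|1)} together with Theorem \ref{FF center for gl11}. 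Since $\iota$ is injective, the crux is to show it is \emph{surjective}, i.e.\ that $\cent(V^{\kk_c}(\gl_{2|1}))$ is ``as large as'' $\pi^0_h\otimes M_0$. Equivalently, working with graded characters: the Wakimoto embedding $\rho\colon\cent(V^{\kk_c}(\gl_{2|1}))\hookrightarrow\pi^0_\h$ of Lemma \ref{FF center goes Heis} gives an upper bound on the Hilbert--Poincar\'e series, namely that of $\pi^0_h\otimes M_0$; I must exhibit enough independent central elements to realize this bound.

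The second and main step is to produce those central elements explicitly from the Segal--Sugawara vectors $s_{p,q}$ of Theorem \ref{Segal-Sugawara vectors}, and show that $\{\pd^r s_{p,p}\}$ already span (strongly generate) the whole upper bound. Here I would compute $\str(\pd+\widehat E)^p$ for $\g=\gl_{2|1}$ and extract $s_{p,p}$; one expects $s_{1,1}$ to be (a multiple of) $h$, generating the $\pi^0_h$ factor, and the remaining $s_{p,p}$ for $p\ge 2$ to map under $\rho$ into $\pi^0_\h$ with leading terms involving the $\sll_{2|1}$-part. The decisive structural input is the identification, via \cite{MM} (and recalled in \S\ref{sec: FF center for affine gl11} through the $M_0$-picture), of the associated graded of $\cent(V^{\kk_c}(\gl_{2|1}))$ with the algebra of affine supersymmetric polynomials on $\C^{2|1}$: this algebra is known to be freely generated, as a differential algebra, by the power sums $p_k=\sum_i (x_i)^k$ in the classical (non-affine) supersymmetric sense, and the symbols of $s_{p,p}$ are exactly these power sums (this is the computation underlying \cite{MR}). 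Thus, matching symbols degree-by-degree, the differential subalgebra generated by $\{\pd^r s_{p,p}\}$ has associated graded equal to the whole associated graded of the center, forcing equality.

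The remaining step is to reconcile the two descriptions of the bound: I would check that the Hilbert--Poincar\'e series of $\pi^0_h\otimes M_0$ coincides with that of the differential algebra of affine supersymmetric polynomials on $\C^{2|1}$ — both are generated in one-to-one correspondence with an infinite family of generators (one in each ``weight'', coming from $s_{p,p}$ and their derivatives) — so that the injection $\rho$ of Lemma \ref{FF center goes Heis} is forced to be an isomorphism onto its predicted image once the $s_{p,p}$ are shown to be independent. Concretely: $\rho$ is injective with image inside $\pi^0_\h$; the subalgebra generated by the images of $\{\pd^r s_{p,p}\}$ has the maximal possible size by the symbol/associated-graded argument; hence $\rho$ is onto that subalgebra and $\cent(V^{\kk_c}(\gl_{2|1}))$ is strongly generated by $\{\pd^r s_{p,p}\}$.

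The main obstacle I anticipate is the associated-graded/symbol computation: one must verify that the symbols of the Molev--Ragoucy vectors $s_{p,p}$ really are algebraically independent generators of the affine supersymmetric polynomial algebra on $\C^{2|1}$ — i.e.\ that no unexpected relations or degeneracies appear at the critical level for $\gl_{2|1}$ — and that these symbols match the generators of the $M_0$-side under $\overline\eta$ and $q$. This is essentially a finite but delicate piece of (super-)invariant theory combined with bookkeeping of the filtration; everything else (injectivity of $\iota$ and $\rho$, the $\W$-algebra and Kazama--Suzuki identifications) is already available from the earlier sections.
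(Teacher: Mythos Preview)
Your overall architecture is exactly the paper's: sandwich $\gr\cent(V^{\kk_c}(\gl_{2|1}))$ between the lower bound $\Lambda^{2|1}_\aff$ (produced by the symbols of the $s_{p,p}$, which are the power sums) and the upper bound $\gr\cent(\W^{\kk_c}(\gl_{2|1}))$ coming from the injection $\iota$, then close the sandwich and lift via the filtration. The lower-bound step you describe is precisely Proposition~\ref{affine ss polys in FF center}.

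Where your proposal diverges from the paper --- and where it has a genuine gap --- is the mechanism for the \emph{upper} bound. You propose to match Hilbert--Poincar\'e series of $\pi^0_h\otimes M_0$ and of $\Lambda^{2|1}_\aff$, arguing that ``both are generated in one-to-one correspondence with an infinite family of generators''. This inference fails: neither algebra is \emph{freely} generated by those families (supersymmetric power sums satisfy relations, and $M_0\simeq K^\infty(\sll_2)$ is famously not a free differential algebra), so a bijection of generating sets does not give equality of characters. A direct character computation is possible but nontrivial (the paper even alludes to false theta functions in a suppressed remark). You also invoke \cite{MM} as if it already identified $\gr\cent(V^{\kk_c}(\gl_{2|1}))$ with $\Lambda^{2|1}_\aff$; it does this only for $\gl_{1|1}$ --- establishing it for $\gl_{2|1}$ is exactly the content of Theorem~\ref{Detecting the FF center}~(2).

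The paper closes the sandwich differently, by proving an honest inclusion $\gr\cent(\W^{\kk_c}(\gl_{2|1}))\subset\Lambda^{2|1}_\aff$ rather than comparing sizes. Using $\gr\W^{\kk_c}(\gl_{2|1})\simeq \gr\pi_h\otimes\gr V^{\kk_c}(\gl_{1|1})$ and the explicit \cite{MM} generators $\overline{h}_{p,p}$ of $\gr\cent(V^{\kk_c}(\gl_{1|1}))$, it computes their images under $[\rho]$ inside $R^{2|1}_\infty$ as $h=u_1+u_2+v_1$ and $(-1)^{p-1}(u_1+u_2+2v_1)^{p-1}(u_1+v_1)(u_2+v_1)$, and simply checks these are supersymmetric. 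This is the ``delicate piece of (super-)invariant theory'' you anticipate, but carried out as an explicit inclusion rather than a character match; once you replace your HP-series step by this computation, your outline becomes the paper's proof.
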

 

\subsection{Affine supersymmetric polynomials}
The center $\cent(V^{\kk_c}(\gl_{m|n}))$ is closely related to the affine supersymmetric polynomials as observed in \cite{MM} for $\widehat{\gl}_{1|1}$.
We view this relationship for our case $\widehat{\gl}_{2|1}$ through the Wakimoto realization in \S \ref{sec: Wakimoto realization}. 

Introduce the polynomial ring $R^{m|n}$ by
$$R^{m|n}:=\C[u_1,\dots,u_m,v_1,\dots, v_n],$$
which admits the natural action of the symmetric group $\mathfrak{S}_{m+n}$.
The ring of supersymmetric polynomilas $\Lambda^{m|n}$ is the subalgebra 
\begin{align*}
\Lambda^{m|n}:=\left\{f ; f|_{u_m=t,v_n=-t}\in R^{m-1|n-1} \right\}\subset (R^{m|n})^{\mathfrak{S}_m\times \mathfrak{S}_n} \subset R^{m|n}.
\end{align*}
It is known \cite{Ma} that $\Lambda^{m|n}$ is generated by the power sums 
\begin{align*}
    s_p=(u_1^p+\cdots +u_m^p)-(-1)^p(v_1^p+\cdots +v_n^p)\quad (p>0).
\end{align*}
The ring of affine supersymmetric polynomials $\Lambda^{m|n}_{\aff}$ is the (commutative) vertex algebraic extension of $\Lambda^{m|n}$; we consider the commutative vertex algebra
$$R^{m|n}_{\infty}:=\C[\pd^N u_1,\dots,\pd^N u_m, \pd^Nv_1,\dots, \pd^Nv_n\mid N\geq0]$$
and introduce $\Lambda^{m|n}_{\aff}$ as the subalgebra generated by $\Lambda^{m|n}$ as a differential algebra:
$$\Lambda^{m|n}_{\aff}:=\langle \pd^N f\mid f\in \Lambda^{m|n}, N\geq 0 \rangle \subset R^{m|n}_{\infty}.$$

\begin{remark}\label{MM conjecture} 
\textup{The Conjecture \ref{mol-rag} is reformulated in \cite[Conjecture 4.3]{MM} as
$$ \Lambda^{m|n}_{\aff} \cong \gr\cent(V^{\kk_c}(\gl_{m|n})). $$
This conjecture was  proved in \cite{MM} for $m=n=1$. We shall prove the conjecture for $m=2$, $n=1$.}
\end{remark}

Now, we relate the affine supersymmetric polynomials to the Segal--Sugawara vectors (Theorem \ref{Segal-Sugawara vectors}) by applying the Wakimoto realization (Proposition \ref{Wakimoto for affine I}). Recall that given a vertex superalgebra $V$, Li filtration $F^\bullet V$ is a decreasing filtration defined by
\begin{align*}
    F^p V=\left\{a^1_{(-n_1-1)}\cdots a^r_{(-n_r-1)}b; \begin{array}{l}
      a^1,\dots,a^r, b \in V  \\
      n_i \geq0,\ n_1+\cdots +n_r \geq p
    \end{array} \right\}.
\end{align*}
The associated graded vector (super)space $\gr V=\oplus_p F^p V/F^{p+1}V$ has a structure of Poisson vertex (super)algebra, see \cite{Ar12, Li}. 
Moreover, given a homomorphism $f\colon V\rightarrow W$ of vertex superalgebras, the above association induces a homomorphism $\overline{f}\colon \gr V\rightarrow \gr W$ of Poisson vertex superalgebras.
Let us call this construction the semi-classical limit of $f\colon V\rightarrow W$. We apply this limit to the Wakimoto realization (Proposition \ref{Wakimoto for affine I}) and obtain the homomorphism
\begin{align*}
    \overline{\rho}\colon \gr V^\kk(\gl_{2|1})\rightarrow \gr (\mathcal{A}\otimes \pi^{\kk-\kk_c}_\h)\simeq  \gr \mathcal{A}\otimes \gr \pi^{\kk-\kk_c}_\h.
\end{align*}

\begin{proposition}\label{affine ss polys in FF center}
There is an embedding
$$\Lambda^{2|1}_{\aff}\subset \gr\cent(V^{\kk_c}(\gl_{2|1})).$$
\end{proposition}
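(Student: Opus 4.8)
The plan is to produce the embedding $\Lambda^{2|1}_{\aff}\hookrightarrow \gr\cent(V^{\kk_c}(\gl_{2|1}))$ by exhibiting, via the semi-classical limit $\overline{\rho}$ of the Wakimoto realization, an explicit identification of the image of the Segal--Sugawara vectors $s_{p,p}$ with the power sums generating $\Lambda^{2|1}$. First I would compute $\overline{\rho}(s_{p,p})$. By Theorem \ref{Segal-Sugawara vectors} the elements $s_{p,q}$ lie in $\cent(V^{\kk_c}(\gl_{2|1}))$, and by Lemma \ref{FF center goes Heis} their images under $\rho$ (hence under $\overline{\rho}$) land in $\pi^0_\h$, i.e.\ in the polynomial algebra $\gr\pi^0_\h=\C[\pd^N\widehat{e}_i\mid i=1,2,3,\ N\geq 0]$. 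Since $\str(\pd+\widehat{E})^p$ is built from $\str$ of a product of the matrix $\pd+\widehat{E}$, and in the Wakimoto image the diagonal entries $e_{i,i}$ map to $\widehat{e}_i$ plus terms in the annihilation-heavy part of $\mathcal{A}$, while the off-diagonal entries map to elements of $\mathcal{A}$ or of $\mathcal{A}\otimes\pi^0_\h$ that are non-constant in the $bc\beta\gamma$ variables, the only contribution surviving the projection to $\pi^0_\h$ in the semi-classical limit comes from the ``diagonal'' term of the matrix product. Concretely I expect $\overline{\rho}(s_{p,p})$ to equal, up to a nonzero scalar and lower-order corrections, the affine power sum
\begin{align*}
\overline{s}_p:=\widehat{e}_1^{\,p}+\widehat{e}_2^{\,p}-(-1)^p\widehat{e}_3^{\,p}\quad\in\ \gr\pi^0_\h,
\end{align*}
matching the generator $s_p$ of $\Lambda^{2|1}$ under the assignment $u_1=\widehat{e}_1$, $u_2=\widehat{e}_2$, $v_1=\widehat{e}_3$; the supertrace sign $(-1)^{p(i)}$ is exactly what produces the $-(-1)^p$ in the power sum.

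Next I would promote this to the affine (differential-algebra) level: since $\overline{\rho}$ is a homomorphism of Poisson vertex superalgebras and $\pd$-equivariant, and since $\cent(V^{\kk_c}(\gl_{2|1}))$ is closed under $\pd$, the images $\overline{\rho}(\pd^N s_{p,p})=\pd^N\overline{\rho}(s_{p,p})$ generate, as a differential subalgebra of $\gr\pi^0_\h\subset \gr\mathcal{A}\otimes\gr\pi^0_\h$, precisely $\Lambda^{2|1}_{\aff}$ under the identification $\gr\pi^0_\h\simeq R^{2|1}_\infty$ sending $\pd^N\widehat{e}_1,\pd^N\widehat{e}_2,\pd^N\widehat{e}_3$ to $\pd^N u_1,\pd^N u_2,\pd^N v_1$. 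Because $\overline{\rho}$ restricted to $\gr\cent(V^{\kk_c}(\gl_{2|1}))$ is injective (this is the semi-classical shadow of Lemma \ref{FF center goes Heis}: the Wakimoto map $\rho$ is injective and its restriction to the center lands isomorphically in $\pi^0_\h$, so passing to associated graded preserves injectivity on the center since $\rho$ is filtration-preserving and strict there), the differential subalgebra of $\gr\cent(V^{\kk_c}(\gl_{2|1}))$ generated by the classes of $\pd^N s_{p,p}$ is isomorphic to $\Lambda^{2|1}_{\aff}$, giving the desired embedding.

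The main obstacle I anticipate is the first step: rigorously isolating the $\pi^0_\h$-component of $\overline{\rho}(s_{p,p})$ and showing it is the honest power sum $\overline{s}_p$ (with the correct nonzero coefficient, and in particular nonzero so that no generator collapses). The matrix $\pd+\widehat{E}$ has, in the Wakimoto image, entries mixing Heisenberg generators $\widehat{e}_i$, the $\beta\gamma$-pair $a,a^*$, the fermions $\phi_i,\phi_i^*$, and the derivation $\pd$; expanding $\str(\pd+\widehat{E})^p$ and tracking which monomials can contribute a pure $\pd^N\widehat{e}_i$-term after the projection of Lemma \ref{FF center goes Heis} requires a careful bookkeeping argument. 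The cleanest route is likely to argue by weight and filtration degree: off-diagonal Wakimoto images carry nonzero $\h$-weight or strictly positive ``$\mathcal{A}$-degree'', and in the top Li-filtration piece only the product of the $p$ diagonal entries can survive, each diagonal entry contributing its leading term $\widehat{e}_i$ (the remaining diagonal corrections $-a^*a,\ -\phi_3^*\phi_3$, etc., are of lower relevant degree or carry $\mathcal{A}$-dependence killed by the projection). Once the leading-term computation is done, matching with the classical description of $\Lambda^{2|1}$ via power sums from \cite{Ma} and the definition of $\Lambda^{2|1}_{\aff}$ as their differential closure finishes the proof.
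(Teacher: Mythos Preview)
Your proposal is correct and takes essentially the same approach as the paper. The computation you flag as the main obstacle is dispatched in the paper in one line: once one knows the image lands in $\gr\pi^0_\h$, setting the $\mathcal{A}$-variables to zero in the associated-graded Wakimoto formulae leaves only the diagonal entries $\widehat{e}_i$, giving $\overline{\rho}(s_{p,p})=\widehat{e}_1^{\,p}+\widehat{e}_2^{\,p}-(-\widehat{e}_3)^p=s_p$ on the nose (scalar $1$, no corrections).
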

\proof
Note that one has an isomorphism $R_\infty^{2|1} \xrightarrow{\simeq} \gr \pi^{0}_\h$ which maps
\begin{align*}
     u_1, u_2 \mapsto e_{1,1}, e_{2,2},\quad v_1 \mapsto  e_{3,3}.
\end{align*}
Hence, it suffices to show that $\Im \overline{\rho}$ contains the power sums $s_{p}$ ($p\geq0$) as $\overline{\rho}$ is an embedding and restricts to 
\begin{align*}
    \overline{\rho}\colon \gr\cent(V^{\kk_c}(\gl_{2|1})) \hookrightarrow \gr \pi^{0}_\h
\end{align*}
by Lemma \ref{FF center goes Heis}. 
By the formula in Proposition \ref{Wakimoto for affine I}, the image of the Segal--Sugawara vector $s_{p,p}$ agrees with 
\begin{align*}
    \str \left(\begin{array}{ccc}
        e_{1,1} &  &  \\
         & e_{2,2} &\\
         & & -e_{3,3}
    \end{array} \right)^p 
    &= e_{1,1}^p+e_{2,2}^p-(-e_{3,3})^p\\
    &= u_1^p+u_2^p-(-1)^pv_1^p=s_p.
\end{align*}
This completes the proof.
\endproof

\subsection{Center}
Now, we are ready to establish the main result of the paper.
\begin{theorem}\label{Detecting the FF center}\hspace{0mm}\\
\textup{(1)} The natural homomorphism $\iota$ is an isomorphism
\begin{align*}
\iota\colon \cent(V^{\kk_c}(\gl_{2|1}))\xrightarrow{\simeq} \cent(\W^{\kk_c}(\gl_{2|1})).
\end{align*}
\textup{(2)} There is an isomorphism of differential algebras 
\begin{align*}
 \gr\cent(V^{\kk_c}(\gl_{2|1}))\simeq \Lambda^{2|1}_{\aff}.
\end{align*}
\end{theorem}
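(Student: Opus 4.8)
The plan is to prove the two parts essentially in tandem, using the Wakimoto realizations of \S\ref{sec: Wakimoto realization} as the common ground, together with the description of $\cent(\W^{\kk_c}(\gl_{2|1}))$ from Proposition \ref{Wakimoto realization for principal sl(2|1)} (which identifies it, via $\eta$, with $\pi^0_h\otimes\cent(V^{\kk_c}(\gl_{1|1}))\simeq \pi^0_h\otimes M_0$ by Theorem \ref{FF center for gl11}). The strategy is a squeeze: Lemma \ref{FF center lies in W-center} already gives that $\iota$ is injective, so for part (1) it remains to prove surjectivity, and this will follow once the two centers are shown to have the same size. For part (2), Proposition \ref{affine ss polys in FF center} gives one inclusion $\Lambda^{2|1}_{\aff}\subset \gr\cent(V^{\kk_c}(\gl_{2|1}))$, so it remains to bound $\gr\cent(V^{\kk_c}(\gl_{2|1}))$ from above by $\Lambda^{2|1}_{\aff}$.

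First I would make the size comparison explicit. Both $\cent(V^{\kk_c}(\gl_{2|1}))$ and $\cent(\W^{\kk_c}(\gl_{2|1}))$ carry the grading coming from the quasi-conformal structure, and the embeddings $\rho$, $[\rho]$ of Lemmas \ref{FF center goes Heis} and \ref{W-FF center goes Heis} land in $\pi^0_h$ compatibly with this grading and with the commutative diagram of Lemma \ref{FF center lies in W-center}. So I would compute the (graded) character of $\cent(\W^{\kk_c}(\gl_{2|1}))$ using $\eta$: it is $\mathrm{ch}\,\pi^0_h\cdot \mathrm{ch}\,M_0$, and the character of $M_0$ is known from \cite{A, MM} (it matches the character of the affine supersymmetric polynomials $\Lambda^{1|1}_{\aff}$ for $\gl_{1|1}$). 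Independently, I would compute $\mathrm{ch}\,\Lambda^{2|1}_{\aff}$ directly from its description as the differential algebra generated by the power sums $s_p$, $p\ge 1$ — i.e., a free differential algebra on generators $s_1, s_2, \dots$ of weights $1, 2, \dots$ (freeness of $\Lambda^{m|n}_{\aff}$ on the power sums is the affine version of Moens–Van der Jeugt/Stembridge and should be recorded as a small lemma, or cited). The key arithmetic identity to check is that $\mathrm{ch}\,\Lambda^{2|1}_{\aff} = \mathrm{ch}\,\pi^0_h\cdot\mathrm{ch}\,\Lambda^{1|1}_{\aff} = \mathrm{ch}\,\cent(\W^{\kk_c}(\gl_{2|1}))$; morally $\pi^0_h$ absorbs the extra $\gl_1$-direction and $s_1$ is exactly the $h$-current.

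With the characters in hand, the squeeze closes as follows. From Proposition \ref{affine ss polys in FF center} and the character identity,
\begin{align*}
\Lambda^{2|1}_{\aff}\subset \gr\cent(V^{\kk_c}(\gl_{2|1})),\qquad \mathrm{ch}\,\gr\cent(V^{\kk_c}(\gl_{2|1}))=\mathrm{ch}\,\cent(V^{\kk_c}(\gl_{2|1}))\le \mathrm{ch}\,\cent(\W^{\kk_c}(\gl_{2|1}))=\mathrm{ch}\,\Lambda^{2|1}_{\aff},
\end{align*}
where the middle inequality is the injectivity of $\iota$ (Lemma \ref{FF center lies in W-center}) combined with $\gr$ not increasing dimensions in each graded piece. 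All terms being equal forces $\Lambda^{2|1}_{\aff}=\gr\cent(V^{\kk_c}(\gl_{2|1}))$, proving (2), and forces $\iota$ to be a graded isomorphism degree by degree, proving (1). (As a consistency check one recovers the Molev–Ragoucy generating set: the $s_{p,p}$ map under $\rho$ to the $s_p\in\pi^0_h$, which generate $\Lambda^{2|1}_{\aff}$ as a differential algebra.)

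The main obstacle I anticipate is the character/size bookkeeping, specifically establishing that $\Lambda^{2|1}_{\aff}$ is freely generated as a differential algebra by the power sums $s_p$ — equivalently, that there are no "extra" affine supersymmetric relations beyond those generated by differentiating the classical ones — and then matching this against $\mathrm{ch}\,M_0\cdot\mathrm{ch}\,\pi^0_h$. The classical statement that $\Lambda^{m|n}$ is a polynomial algebra on the $s_p$ is classical (Stembridge), but the affinization needs care: one must rule out that, e.g., some differential polynomial in the $s_p$ vanishes in $R^{2|1}_\infty$ after the substitution defining $\Lambda^{2|1}_{\aff}$. I would handle this by exhibiting $R^{2|1}_\infty$ as a polynomial ring and checking algebraic independence of $\{\pd^N s_p\}$ directly, or by transporting the question through $\overline{\rho}$ to the already-understood $\gl_{1|1}$ picture. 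Everything else — the $\lambda$-bracket computations, the explicit images under $\rho$ and $[\rho]$ — is routine given the earlier sections.
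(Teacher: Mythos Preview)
Your squeeze strategy is sound and matches the paper's, but the step on which the whole argument rests --- computing $\mathrm{ch}\,\Lambda^{2|1}_{\aff}$ via freeness of the power sums --- is based on a false premise. For $\Lambda^{m|n}$ with $m,n>0$ the power sums $s_p$ are \emph{not} algebraically independent: already in $\Lambda^{1|1}$ one has the relation $s_1^4-4s_1 s_3+3s_2^2=0$, and in $\Lambda^{2|1}$ the transcendence degree is $3$ while infinitely many $s_p$ are needed to generate. Stembridge (and the result recorded in \cite{Ma}) gives generation by power sums, not polynomial independence; the infinite, non-free nature of $\Lambda^{m|n}$ for $mn>0$ is precisely what distinguishes the super case. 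Consequently the character identity $\mathrm{ch}\,\Lambda^{2|1}_{\aff}=\mathrm{ch}\,\pi^0_h\cdot\mathrm{ch}\,\Lambda^{1|1}_{\aff}$, while true a posteriori, has no direct proof along the lines you sketch.

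The paper bypasses characters entirely: instead of bounding sizes it proves the inclusion $\gr\cent(\W^{\kk_c}(\gl_{2|1}))\subset \Lambda^{2|1}_{\aff}$ as subalgebras of $R^{2|1}_\infty\simeq\gr\pi^0_\h$. Concretely, it transports the known generators $\overline{h}_{p,p}$ of $\gr\cent(V^{\kk_c}(\gl_{1|1}))$ from \cite{MM} through $\eta^{-1}$ and then through the semi-classical Wakimoto realization $\overline{[\rho]}$, obtaining explicit polynomials $(-1)^{p-1}(u_1+u_2+2v_1)^{p-1}(u_1+v_1)(u_2+v_1)$ in $R^{2|1}_\infty$, and checks by hand that these are supersymmetric (symmetric in $u_1,u_2$, and killed by the substitution $u_i=t,\,v_1=-t$). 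This is exactly the ``transport through $\overline{\rho}$ to the $\gl_{1|1}$ picture'' you mention as a fallback, but the content is the explicit identification of the generators inside $R^{2|1}_\infty$ and the verification of supersymmetry --- not a character comparison.
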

\proof
By Lemma \ref{W-FF center goes Heis} and \ref{FF center lies in W-center}, we have embeddings
\begin{align*}
    \cent(V^\kk(\gl_{2|1}))\subset \cent(\W^\kk(\gl_{2|1})) \subset \pi^0_\h.
\end{align*}
On the other hand, we have $\Lambda^{2|1}_{\aff}\subset \gr\cent(V^{\kk_c}(\gl_{2|1}))$ by Proposition \ref{affine ss polys in FF center}.
Hence, it suffices to show 
\begin{align}\label{opposite inclusion}
    \gr\cent(\W^{\kk_c}(\gl_{2|1}))\subset \Lambda^{2|1}_{\aff}.
\end{align}
For this purpose, we take the semi-classical limit of the Wakimoto realization of $\W^{\kk_c}(\gl_{2|1})$ (Proposition \ref{Wakimoto for principal super I}) and obtain the embedding $\gr \W^{\kk_c}(\gl_{2|1})\subset \gr \overline{\mathcal{A}} \otimes \gr \pi^0_\h$, which restricts to 
\begin{align*}
    \gr \cent(\W^{\kk_c}(\gl_{2|1}))\subset \gr\pi^0_{\h}\simeq R^{2|1}_\infty
\end{align*}
by Lemma \ref{W-FF center goes Heis}. 
On the other hand, we have the isomorphism 
$$\gr \W^{\kk_c}(\gl_{2|1})\simeq \gr \pi_h \otimes \gr V^{\kk_c}(\gl_{1|1}) $$
by Proposition \ref{Wakimoto realization for principal sl(2|1)}. 
By \cite{MM}, there is an isomorphism $\gr \cent(V^{\kk_c}(\gl_{1|1}))\simeq \Lambda^{1|1}_\aff$
and $\gr \cent(V^{\kk_c}(\gl_{1|1}))$ is generated by the elements
$$\overline{h}_{p,p}=e_{1,1}^{p-1}(e_{1,1}+e_{2,2})+(p-1)e_{1,1}^{p-2}e_{2,1}e_{1,2}$$
for $p\geq 1$.
Therefore, $\gr \cent( \W^{\kk_c}(\gl_{2|1}))$ is generated by $h=u_1+u_2+v_1$ and the elements corresponding to $\overline{h}_{p,p}$ ($p\geq 1$), that is, 
\begin{align*}
    (-1)^{p-1}(u_1+u_2+2v_1)^{p-1}(u_1+v_1)(u_2+v_1).
\end{align*}
Hence, we have $\gr\cent(\W^{\kk_c}(\gl_{2|1})) \subset \Lambda^{2|1}_{\aff}$.
This completes the proof.
\endproof
\begin{remark} 
\textup{
One may indeed deduce Conjecture \ref{mol-rag} in our case by using a standard argument on positively-filtered algebras from Theorem \ref{Detecting the FF center} (2) and the fact that $\Lambda^{2|1}_{\aff}$ is generated as a differential algebra by the image of $s_{p,p}$'s under the associated graded.}
\end{remark}
\begin{corollary}\label{Final description of FF center}
    There are isomorphisms of (commutative) vertex algebras 
    \begin{align*}
        &\cent(V^{\kk_c}(\sll_{2|1}))\simeq K^\infty(\sll_2),\quad \cent(V^{\kk_c}(\gl_{2|1}))\simeq K^\infty(\gl_2).
    \end{align*}
\end{corollary}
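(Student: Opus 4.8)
The plan is to deduce Corollary \ref{Final description of FF center} by combining the two main results already established: Theorem \ref{Detecting the FF center}(1), which identifies $\cent(V^{\kk_c}(\gl_{2|1}))$ with $\cent(\W^{\kk_c}(\gl_{2|1}))$, and the earlier Corollary identifying $\cent(\W^{\kk_c}(\gl_{2|1}))\simeq K^\infty(\gl_2)$ and $\cent(\W^{\kk_c}(\sll_{2|1}))\simeq K^\infty(\sll_2)$ (obtained via the large-level limit of the Kazama--Suzuki duality and Theorem \ref{FF center for gl11}). Composing these isomorphisms gives $\cent(V^{\kk_c}(\gl_{2|1}))\simeq K^\infty(\gl_2)$ immediately. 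So the only genuine content to address is transferring the statement from $\gl_{2|1}$ to $\sll_{2|1}$.

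For the $\sll_{2|1}$ case, I would use the tensor decomposition $V^{\kk}(\gl_{2|1})\simeq V^{\kk}(\sll_{2|1})\otimes \pi^{\kk}_h$ recorded in Section 2. At the critical level this Heisenberg factor $\pi^{\kk_c}_h$ has $\lambda$-bracket $\lm{h}{h}=(m-n)k_1\lambda = k_1 \lambda$ with the appropriate normalization; one must check which normalization of $\kk_c$ makes $\pi^{\kk_c}_h$ a \emph{commutative} vertex algebra, i.e. $h$ central. Indeed, at the critical level $\kk=\kk_c=-\tfrac12\kk_\g$ for $\gl_{2|1}$, the restriction of $\kk_c$ to $\C h$ vanishes (since $\kk_\g$ is degenerate on the center of $\gl_{2|1}$, as $m=n+1$ makes the supertrace form on $\C h$ proportional to $m-n$ times something that still survives --- this needs a short explicit check using the formula $\kappa_c(e_{i,i},e_{p,q})$ given in Section 2). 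Granting that $\pi^{\kk_c}_h$ is commutative, we get $\cent(V^{\kk_c}(\gl_{2|1}))\simeq \cent(V^{\kk_c}(\sll_{2|1}))\otimes \pi^{\kk_c}_h$, and comparing with $\cent(\W^{\kk_c}(\gl_{2|1}))\simeq \pi^0_h\otimes\cent(\W^{\kk_c}(\gl_{1|1}))$ from Proposition \ref{Wakimoto realization for principal sl(2|1)} forces $\cent(V^{\kk_c}(\sll_{2|1}))\simeq \cent(\W^{\kk_c}(\sll_{2|1}))\simeq K^\infty(\sll_2)$. Concretely, the isomorphism $\iota$ of Theorem \ref{Detecting the FF center}(1) is compatible with these Heisenberg splittings (both are induced by the decomposition $\gl=\sll\oplus\C h$), so one can simply restrict $\iota$ to the $\sll_{2|1}$ factor.

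The steps in order: (i) verify that $h$ is central in $V^{\kk_c}(\gl_{2|1})$, equivalently that $\kk_c|_{\C h}=0$, using the explicit bilinear form; (ii) conclude $\pi^{\kk_c}_h$ is a commutative vertex algebra and hence a tensor factor of $\cent(V^{\kk_c}(\gl_{2|1}))$ and of $\cent(\W^{\kk_c}(\gl_{2|1}))$; (iii) observe that all the isomorphisms in Theorem \ref{Detecting the FF center}(1), in Proposition \ref{Wakimoto realization for principal sl(2|1)}, and in the Kazama--Suzuki corollary respect the $h$-factor, so they descend to the $\sll_{2|1}$ / $\sll_2$ side; (iv) read off $\cent(V^{\kk_c}(\sll_{2|1}))\simeq K^\infty(\sll_2)$ and $\cent(V^{\kk_c}(\gl_{2|1}))\simeq K^\infty(\gl_2)$, using $K^\infty(\gl_2)\simeq \pi^0_h\otimes K^\infty(\sll_2)$ which follows from $V^\ell(\gl_2)\simeq V^\ell(\sll_2)\otimes \pi^\ell_h$ and the definition of the parafermion coset.

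The main obstacle, such as it is, is the bookkeeping in step (i)--(iii): one must be careful that ``critical level for $\gl_{2|1}$'' and ``critical level for $\sll_{2|1}$'' match up under the decomposition, i.e. that $\kk_c$ restricted to $\sll_{2|1}$ is exactly $\kk_c(\sll_{2|1})$ and that the $h$-direction genuinely decouples with zero pairing. Once this is pinned down the corollary is a formal consequence of the already-proven theorems, so no hard new computation is required; it is essentially a matter of tracking the Heisenberg tensor factor through the chain of isomorphisms $\cent(V^{\kk_c}(\gl_{2|1}))\xrightarrow{\iota}\cent(\W^{\kk_c}(\gl_{2|1}))\xrightarrow{\sim}\pi^0_h\otimes\cent(V^{\kk_c}(\gl_{1|1}))\xrightarrow{\sim}\pi^0_h\otimes M_0\xrightarrow{\sim}K^\infty(\gl_2)$ and cancelling $\pi^0_h$.
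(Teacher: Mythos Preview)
Your approach is correct and matches the paper's own (implicit) reasoning: the corollary is stated without proof, as it follows immediately by composing Theorem~\ref{Detecting the FF center}(1) with the Corollary at the end of \S4.2, and then stripping off the Heisenberg factor $\pi^0_h$ via the tensor decompositions $V^{\kk_c}(\gl_{2|1})\simeq V^{\kk_c}(\sll_{2|1})\otimes\pi^{\kk_c}_h$ and $\W^{\kk_c}(\gl_{2|1})\simeq \W^{\kk_c}(\sll_{2|1})\otimes\pi^{\kk_c}_h$. Your only unnecessary worry is the verification in step~(i): since $\kk_c=-\tfrac12\kk_\g$ and $h$ is central in $\gl_{2|1}$ (so $\ad_h=0$ and hence $\kk_\g(h,\cdot)=0$), the identity $\pi^{\kk_c}_h=\pi^0_h$ is immediate and already used in the paper's notation from Proposition~\ref{Wakimoto realization for principal sl(2|1)} onward.
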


\section{Inverse Hamiltonian reduction}
\label{iqhr}
In this section, we relate the centers of $V^{\kk_c}(\sll_{2|1})$ and $\W^{\kk_c}(\sll_{2|1})$ by realizing $V^{\kk_c}(\sll_{2|1})$ in terms of $\W^{\kk_c}(\sll_{2|1})$, namely the inverse Hamiltonian reduction first considered by Semikhatov \cite{Sem96} for non-critical levels.
For this realization, we use the $bc$-system vertex superalgebra $\mathcal{A}_\phi$ (see §\ref {sec: FF center for affine gl11}) and the so-called half-lattice vertex algebra $\Pi(0)$ introduced in \cite{A-2017}.
The algebra $\Pi(0)$ is, by definition, the subalgebra
$$\Pi(0)=\bigoplus_{n\in \Z}\pi^{u,v}_{n(u+v)}\subset V_L$$
inside the lattice vertex superalgebra $V_{L}$ associated with the integer lattice $L=\Z u\oplus \Z v$ equipped with bilinear form such that $(u,u)=1=-(v,v)$. It will be convenient to introduce 
$$c=u+v,\quad d=u-v.$$
As the highest weights in $\Pi(0)$ are $n(u+v)=nc$ which have norm zero, we may extend $\Pi(0)$ to 
$$\Pi^{\frac{1}{m}}(0)=\bigoplus_{n\in \Z}\pi^{u,v}_{\frac{n}{m}(u+v)}$$
along a larger lattice $\frac{1}{m}\Z c\supset \Z c$ for $m\in \Z_+$. By setting
\begin{align*}
    \mu=\frac{k}{2}c+d,\quad \nu=\frac{1}{2}\left(\frac{k}{2}c-d\right),
\end{align*}
we have the following realization of $V^\kk(\sll_{2|1})$.
\begin{proposition} \hspace{0mm}\\
\textup{(1) (cf.\ \cite{Sem96})} There is an embedding of vertex superalgebras 
$$\Phi^\kk\colon V^{\kk} (\sll_{2|1}) \hookrightarrow  \W^\kk(\sll_{2|1})\otimes \mathcal{A}_\phi \otimes \Pi^\frac{1}{2}(0)$$
such that
\begin{align*}
& h_1\mapsto \mu,\quad h_2 \mapsto \tfrac{1}{2}(J-\phi \phi^*-\mu),\\
&e_{1,2}\mapsto \hwt{c},\quad e_{1,3}\mapsto \phi \hwt{c/2},\quad e_{3,2}\mapsto \phi^* \hwt{c/2},\\
&e_{2,1}\mapsto (\mathcal{T}-\nu^2+(k+1)\pd \nu)\hwt{-c},\\
&e_{3,1}\mapsto -\left(G^++\phi^*\nu-(k+\tfrac{1}{2})\pd\phi^*-\tfrac{1}{2}J\phi^* \right)\hwt{-c/2},\\
&e_{2,3}\mapsto +\left(G^-+\phi\nu-(k+\tfrac{1}{2})\pd\phi+\tfrac{1}{2}J\phi \right)\hwt{-c/2},
\end{align*}
where 
$$\mathcal{T}=-S+G^+\phi+G^-\phi^*+\tfrac{1}{4}(J+\phi\phi^*)^2-\tfrac{1}{2}(k+1)(\phi\phi^*)^2.$$

\textup{(2)} At the critical level $\kk=\kk_c$, the map $\Phi^{\kk_c}$ restricts to an isomorphism
\begin{align*}
    \Phi^{\kk_c}\colon \cent(V^{\kk_c} (\sll_{2|1})) \xrightarrow{\simeq} \cent(\W^{\kk_c}(\sll_{2|1})).
\end{align*}

\end{proposition}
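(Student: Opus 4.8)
The plan is to establish part (1) by a direct $\lambda$-bracket computation and then derive part (2) from it using the Wakimoto realizations already developed in \S\ref{sec: Wakimoto realization}. For part (1), I would first recall that the screening-operator characterization of $\W^\kk(\sll_{2|1})$ inside its free-field realization, combined with the free-field realization of $\Pi^{1/2}(0)$ inside the lattice vertex superalgebra $V_L$, reduces the verification to checking the defining $\lambda$-brackets of $V^\kk(\sll_{2|1})$ on the proposed images. The even subalgebra generated by $h_1,h_2,e_{1,2},e_{2,1}$ is essentially a copy of the Wakimoto-type realization of $\W^\kk(\sll_2)\otimes\Pi^{1/2}(0)$ underlying the inverse reduction of \cite{Sem96}, so the new content is the fermionic part: one checks $[e_{1,3}{}_\lambda e_{3,2}]$, $[e_{1,3}{}_\lambda e_{2,3}]$, $[e_{3,1}{}_\lambda e_{3,2}]$, $[e_{3,1}{}_\lambda e_{2,3}]$, and the mixed brackets of the odd elements with $h_1,h_2,e_{1,2},e_{2,1}$, using the OPEs $\hwt{c}(z)\hwt{-c}(w)\sim 0$ (since $(c,c)=0$), $\hwt{c/2}(z)\hwt{-c/2}(w)\sim 1$, and the $\W$-superalgebra relations at generic $k$ recorded just before Proposition \ref{Wakimoto realization for principal sl(2|1)}. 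The auxiliary field $\mathcal{T}$ is engineered so that $[\mathcal{T}{}_\lambda G^\pm]$ and $[\mathcal{T}{}_\lambda\mathcal{T}]$ reproduce exactly the correction terms needed for $[e_{2,1}{}_\lambda e_{3,1}]$ and $[e_{2,3}{}_\lambda e_{3,1}]$ to land on $e_{3,1}$-type and $e_{2,1}$-type expressions; I expect this bookkeeping of $\partial$-terms and the $(k+1)$- versus $(k+\tfrac12)$-coefficients to be the most delicate point, but it is a routine (if lengthy) OPE calculation once the screening kernels are used to certify that the image actually lands in $\W^\kk(\sll_{2|1})\otimes\mathcal{A}_\phi\otimes\Pi^{1/2}(0)$.

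For part (2), I would specialize to $k=-1$, where $\W^{\kk_c}(\sll_{2|1})$ becomes the much simpler algebra with $[J{}_\lambda J]=\lambda$, $[J{}_\lambda G^\pm]=\pm G^\pm$, $[G^+{}_\lambda G^-]=S$ and $S$ central. The strategy is to show $\Phi^{\kk_c}$ sends $\cent(V^{\kk_c}(\sll_{2|1}))$ injectively into $\cent(\W^{\kk_c}(\sll_{2|1}))$ and is surjective onto it. Injectivity is automatic since $\Phi^{\kk_c}$ is an embedding. To see that the image lies in the center, note that at the critical level the whole ambient algebra $\W^{\kk_c}(\sll_{2|1})\otimes\mathcal{A}_\phi\otimes\Pi^{1/2}(0)$ contains $\cent(\W^{\kk_c}(\sll_{2|1}))\otimes\C\otimes\C$ as its center's relevant piece; concretely, if $z\in\cent(V^{\kk_c}(\sll_{2|1}))$ then $\Phi^{\kk_c}(z)$ commutes with all of $\Phi^{\kk_c}(V^{\kk_c}(\sll_{2|1}))$, and since $e_{1,2}\mapsto\hwt{c}$, $e_{1,3}\mapsto\phi\hwt{c/2}$, $e_{3,2}\mapsto\phi^*\hwt{c/2}$ generate (together with their brackets) enough of $\mathcal{A}_\phi\otimes\Pi^{1/2}(0)$ that commuting with them forces $\Phi^{\kk_c}(z)$ into $\cent(\W^{\kk_c}(\sll_{2|1}))\otimes\C\otimes\C$.

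The cleanest way to close the argument, rather than re-deriving the center-preservation abstractly, is to use the commutative diagrams of \S\ref{sec: Wakimoto realization}: both $\cent(V^{\kk_c}(\gl_{2|1}))$ and $\cent(\W^{\kk_c}(\gl_{2|1}))$ were identified in Theorem \ref{Detecting the FF center} via the Wakimoto maps $\rho$ and $[\rho]$ with the same subalgebra $\pi^0_\h\simeq\Lambda^{2|1}_\aff$, and $\iota$ is an isomorphism there. Stripping off the Heisenberg factor $\pi_h$ (using $V^\kk(\gl_{2|1})\simeq V^\kk(\sll_{2|1})\otimes\pi^\kk_h$ and the corresponding decomposition for $\W$), one gets $\cent(V^{\kk_c}(\sll_{2|1}))\simeq\cent(\W^{\kk_c}(\sll_{2|1}))$ as an already-established isomorphism, and the content of part (2) is that $\Phi^{\kk_c}$ realizes precisely this isomorphism. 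To verify this I would compute $\Phi^{\kk_c}$ on the image of a generating set of $\cent(V^{\kk_c}(\sll_{2|1}))$ — e.g.\ the Segal--Sugawara vectors $s_{p,p}$ restricted to $\sll_{2|1}$ — and check that modulo the $\hwt{\pm c}$, $\hwt{\pm c/2}$ and $\phi,\phi^*$ contributions (all of which cancel on central elements, exactly as the $e^R$-weight argument in Lemma \ref{FF center goes Heis} forces) the output is the expected element of $\cent(\W^{\kk_c}(\sll_{2|1}))$ expressed through $J$ and $S$. The main obstacle I anticipate is not any single hard step but keeping the normal-ordering corrections consistent between the two sides; the saving grace is that at $k=-1$ almost all the structure constants collapse, so the verification that $\Phi^{\kk_c}$ restricts to the identification of centers should reduce to a short check on generators.
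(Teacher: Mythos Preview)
Your approach to part (1) is along the right lines---the paper also verifies the $\lambda$-brackets directly---but you have a genuine gap: you never address injectivity. The statement claims $\Phi^\kk$ is an \emph{embedding}, and this is not automatic, particularly at the critical level where $V^{\kk_c}(\sll_{2|1})$ is far from simple. The paper handles this uniformly in $k$ by introducing a lexicographic monomial order on $\W^\kk(\sll_{2|1})\otimes\mathcal{A}_\phi\otimes\Pi^{1/2}(0)$ (ordering the generators as $S>J>G^+>G^->\phi>\phi^*>c>d>\hwt{pc/2}$) and reading off the leading terms of the images: $h_1\sim\mu$, $h_2\sim\tfrac12 J$, $e_{1,2}\sim\hwt{c}$, $e_{1,3}\sim\phi\hwt{c/2}$, $e_{3,2}\sim\phi^*\hwt{c/2}$, $e_{2,1}\sim -S\hwt{-c}$, $e_{3,1}\sim -G^+\hwt{-c/2}$, $e_{2,3}\sim G^-\hwt{-c/2}$. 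Since these leading terms are independent PBW generators of the target, injectivity follows. Also, your screening-operator detour is unnecessary: the images are already written in the strong generators $J,S,G^\pm$ of $\W^\kk(\sll_{2|1})$, so there is nothing to ``certify''---you compute directly with the $\lambda$-brackets recorded before Proposition~\ref{Wakimoto realization for principal sl(2|1)}.

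For part (2) your strategy matches the paper's: show $\Phi^{\kk_c}(\cent(V^{\kk_c}(\sll_{2|1})))\subset\cent(\W^{\kk_c}(\sll_{2|1}))$ by commuting with images of generators, then invoke Theorem~\ref{Detecting the FF center} to upgrade the injection to an isomorphism by graded-dimension comparison. Two refinements. First, your peeling argument is incomplete: commuting with $\hwt{c}$, $\phi\hwt{c/2}$, $\phi^*\hwt{c/2}$ alone cannot force the image into $\cent(\W^{\kk_c}(\sll_{2|1}))$, since none of these carries any $\W$-content. The paper first uses $\Phi^{\kk_c}(h_1)=\mu$ together with $\hwt{c}$ to strip $\Pi^{1/2}(0)$, then $\phi\hwt{c/2}$ and $\phi^*\hwt{c/2}$ to strip $\mathcal{A}_\phi$, and finally $\Phi^{\kk_c}(h_2)$, $\Phi^{\kk_c}(e_{3,1})$, $\Phi^{\kk_c}(e_{2,3})$ to force commutation with $J,G^\pm$; since $S$ is already central at $\kk_c$, this lands in $\cent(\W^{\kk_c}(\sll_{2|1}))$. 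Second, your final verification on Segal--Sugawara vectors is unnecessary: the proposition only claims $\Phi^{\kk_c}$ restricts to \emph{an} isomorphism of centers, not that it coincides with $\iota$, and an injection between graded spaces already known (via Theorem~\ref{Detecting the FF center}) to be isomorphic is automatically surjective.
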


\proof
(1) The map $\Phi^\kappa$ essentially agrees with the one in \cite{Sem96} at non-critical levels $\kappa \neq \kappa_c$. We reformulate it regarding our choice of strong generators, which works for all levels $\kappa$. (One may check that the formula $\Phi^\kappa$ preserves the OPEs and thus defines a homomorphism of vertex superalgebras directly.)
To show that $\Phi^\kappa$ is an embedding, we consider the lexicographic order of the monomial basis for $\W^\kk(\sll_{2|1})\otimes \mathcal{A}_\phi \otimes \Pi(0)^{1/2}$ by introducing the following order for the generators
$$\pd^{\bullet}S>\pd^{\bullet}J>\pd^{\bullet}G^+>\pd^{\bullet}G^->\pd^{\bullet}\phi>\pd^{\bullet}\phi^*>\pd^{\bullet}c>\pd^{\bullet}d>\hwt{pc/2} $$
with $\pd^{n+1}A>\pd^{n}A$ and $\hwt{(p+1)c/2}>\hwt{pc/2}$.
Then the leading terms of the images of the strong generators of $V^\kappa(\sll_{2|1})$ are
\begin{align*}
& h_1\sim \mu,\quad h_2 \sim \tfrac{1}{2}J,\\
&e_{1,2}\sim \hwt{c},\quad e_{1,3}\sim \phi \hwt{c/2},\quad e_{3,2}\sim \phi^* \hwt{c/2},\\
&e_{2,1}\sim -S\hwt{-c}, \quad e_{3,1}\sim -G^+\hwt{-c/2},\quad e_{2,3}\sim +G^-\hwt{-c/2}.
\end{align*}
Now, it is clear that $\Phi^\kappa$ is injective. 

(2) As $\Phi^{\kk_c}$ is an embedding, it suffices to show 
\begin{align*} 
    \Phi^{\kk_c}\left(\cent(V^{\kk_c} (\sll_{2|1})) \right)&\subset  \cent(\W^{\kk_c}(\sll_{2|1}))
\end{align*}
thanks to Theorem \ref{Detecting the FF center} (1).
This is achieved by using a standard technique developed to show the almost simplicity of inverse Hamiltonian reductions (see e.g.\ the proof of \cite[Theorem 8.1]{A-2017}). 
Set $\mathfrak{Z}=\Phi^{\kk_c}\left(\cent(V^{\kk_c} (\sll_{2|1})) \right)$. 
Since $\mathfrak{Z}$ commutes with $\Phi^{\kk_c}(h_1)$ and $\Phi^{\kk_c}(\hwt{c})$, we have 
$\mathfrak{Z}\subset \W^\kk(\sll_{2|1})\otimes \mathcal{A}_\phi$.
Secondly, $\mathfrak{Z}$ commutes with $\Phi^{\kk_c}(e_{1,3})$ and $\Phi^{\kk_c}(e_{3,2})$. 
Then it follows that 
\begin{align*}
    \mathfrak{Z}\subset \W^\kk(\sll_{2|1}) \subset \W^\kk(\sll_{2|1})\otimes \mathcal{A}_\phi.
\end{align*}
Finally, $\mathfrak{Z}$ commutes with $\Phi^{\kk_c}(h_2)$, $\Phi^{\kk_c}(e_{3,1})$ and $\Phi^{\kk_c}(e_{2,3})$ implies that $\mathfrak{Z}\subset \W^\kk(\sll_{2|1})$ commutes with $J$, $G^\pm$. As $S$ is a central element, we conclude $\mathfrak{Z}\subset \cent(\W^\kk(\sll_{2|1}))$. This completes the proof.
\endproof

Moreover, the map $\Phi^\kk$ at the critical level relates the maximal ideals.  We have the following result which gives another formulation of the  realization of $L_{\kk_c}(\sll_{2|1})$ obtained in \cite{KW01}.

\begin{proposition}
At the critical level $\kk=\kk_c$, the map $\Phi^{\kappa_c}$ induces an embedding for the (unique) graded simple quotients:
$$\Phi_{\kk_c}\colon L_{\kk_c} (\sll_{2|1}) \hookrightarrow  \W_{\kk_c}(\sll_{2|1})\otimes \mathcal{A}_\phi \otimes \Pi^\frac{1}{2}(0). $$
\end{proposition}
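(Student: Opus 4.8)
The plan is to upgrade the embedding $\Phi^{\kk_c}\colon V^{\kk_c}(\sll_{2|1})\hookrightarrow \W^{\kk_c}(\sll_{2|1})\otimes \mathcal{A}_\phi\otimes \Pi^{1/2}(0)$ from the previous Proposition to the level of graded simple quotients. First I would record that $\Phi^{\kk_c}$ is compatible with the quasi-conformal gradings on both sides: the right-hand side carries the tensor-product grading coming from the quasi-conformal structure on $\W^{\kk_c}(\sll_{2|1})$ (gradings $1,3/2,2$ on $J,G^\pm,S$), the usual grading on $\mathcal{A}_\phi$, and the grading on $\Pi^{1/2}(0)$ for which $\hwt{\pm c/2}$ has appropriate conformal weight; inspecting the formulas for $\Phi^{\kk_c}$ one checks the images of $h_i, e_{i,j}$ are homogeneous of the expected $\sll_{2|1}$-weights. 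Hence $\Phi^{\kk_c}$ is a homomorphism of graded vertex superalgebras, and its image $\Phi^{\kk_c}(V^{\kk_c}(\sll_{2|1}))$ inherits a grading.

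Next I would pass to simple quotients. Let $I\subset V^{\kk_c}(\sll_{2|1})$ be the maximal graded ideal, so $L_{\kk_c}(\sll_{2|1})=V^{\kk_c}(\sll_{2|1})/I$. The key point is that $\Phi^{\kk_c}(I)$ generates an ideal $J$ of $\Phi^{\kk_c}(V^{\kk_c}(\sll_{2|1}))$ (indeed of the subalgebra generated by the image) whose intersection with $\Phi^{\kk_c}(V^{\kk_c}(\sll_{2|1}))$ is exactly $\Phi^{\kk_c}(I)$ — this is the almost-simplicity mechanism used in the proof of the previous proposition and in \cite[Theorem 8.1]{A-2017}. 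Concretely, I would argue: if $v\in V^{\kk_c}(\sll_{2|1})$ has $\Phi^{\kk_c}(v)$ lying in the ideal of the big algebra generated by $\Phi^{\kk_c}(I)$, then using that the generators $\hwt{c}$, $e_{1,3}$, $e_{3,2}$, $h_1$, $h_2$ act ``invertibly enough'' on the $\Pi^{1/2}(0)$- and $\mathcal{A}_\phi$-directions (the half-lattice exponentials $\hwt{\pm c/2}$ are invertible), one can strip off these factors and conclude $v\in I$ by maximality of $I$ as a graded ideal. Therefore $\Phi^{\kk_c}$ descends to an \emph{injective} homomorphism
\begin{align*}
\Phi_{\kk_c}\colon L_{\kk_c}(\sll_{2|1})\hookrightarrow \left(\W^{\kk_c}(\sll_{2|1})\otimes \mathcal{A}_\phi\otimes \Pi^{1/2}(0)\right)\big/ \langle \Phi^{\kk_c}(I)\rangle.
\end{align*}

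Finally I would identify the target. By Corollary \ref{simple quotient of W}, the graded simple quotient $\W_{\kk_c}(\sll_{2|1})\simeq \pi_J$ is obtained from $\W^{\kk_c}(\sll_{2|1})$ by killing $S$ (and consequently $G^\pm$, since they become central odd elements of degree $3/2$), so the maximal graded ideal of $\W^{\kk_c}(\sll_{2|1})$ is $\langle S\rangle$. Tracking the formulas for $\Phi^{\kk_c}$ at $k=-1$ — where $\mathcal{T}=-S+G^+\phi+G^-\phi^*+\tfrac14(J+\phi\phi^*)^2$ and the images of $e_{2,1},e_{3,1},e_{2,3}$ are built from $S,G^\pm,J,\phi,\phi^*$ — one sees that the ideal generated by $\Phi^{\kk_c}(I)$ in the big algebra is precisely $\langle S, G^+, G^-\rangle\otimes \mathcal{A}_\phi\otimes \Pi^{1/2}(0)$, whose quotient is $\W_{\kk_c}(\sll_{2|1})\otimes \mathcal{A}_\phi\otimes \Pi^{1/2}(0)$. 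This yields the desired embedding $\Phi_{\kk_c}\colon L_{\kk_c}(\sll_{2|1})\hookrightarrow \W_{\kk_c}(\sll_{2|1})\otimes \mathcal{A}_\phi\otimes \Pi^{1/2}(0)$, which one then checks coincides with the realization of \cite{KW01}. The main obstacle I anticipate is the second step: verifying carefully that the big-algebra ideal generated by $\Phi^{\kk_c}(I)$ contracts back to exactly $\Phi^{\kk_c}(I)$ (equivalently, that no ``extra'' relations are created), which requires the lexicographic leading-term analysis from part (1) of the previous proposition together with the invertibility of the lattice exponentials; the grading compatibility and the identification of the quotient target are comparatively routine once that is in place.
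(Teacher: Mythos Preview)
Your proposal has a genuine gap, and it is not where you think it is. You identify Step~2 (the ``contraction'' of the big-algebra ideal back to $\Phi^{\kk_c}(I)$) as the main obstacle, but that step is nearly automatic: $\Phi^{\kk_c}{}^{-1}\bigl(\langle \Phi^{\kk_c}(I)\rangle_{\mathrm{big}}\bigr)$ is a graded ideal of $V^{\kk_c}(\sll_{2|1})$ containing $I$, so by maximality it equals $I$ as soon as it is proper. The real difficulty is your Step~3, specifically the inclusion
\[
\Phi^{\kk_c}(I)\subset \langle S,G^+,G^-\rangle \otimes \mathcal{A}_\phi\otimes \Pi^{1/2}(0),
\]
i.e.\ that the composite $V^{\kk_c}(\sll_{2|1})\to \W_{\kk_c}(\sll_{2|1})\otimes \mathcal{A}_\phi\otimes \Pi^{1/2}(0)$ kills the maximal ideal $I$. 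You assert this follows by ``tracking the formulas'', but the formulas only record the images of the \emph{generators} $h_i,e_{i,j}$; they tell you nothing about the shape of $I$, which is nowhere described explicitly in the paper. Without an independent handle on $I$, there is no way to verify this inclusion by inspection, and the leading-term/invertibility argument you invoke does not address it either: that argument shows $\Phi^{\kk_c}$ is injective, not that $I$ maps into a particular ideal of the target.

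The paper's proof resolves exactly this point, and does so by the route you relegate to an after-the-fact ``check''. It takes the Kac--Wakimoto embedding $g\colon L_{\kk_c}(\sll_{2|1})\hookrightarrow \mathcal{A}_a^{\otimes 2}\otimes \mathcal{A}_\phi$ from \cite{KW01}, bosonizes via Friedan--Martinec--Shenker, and then writes down an explicit embedding $\Psi\colon \pi_J\otimes \mathcal{A}_\phi\otimes \Pi^{1/2}(0)\hookrightarrow \Pi^{1/2}(0)^{\otimes 2}\otimes \mathcal{A}_\phi$ making the diagram
\[
\begin{tikzcd}
V^{\kk_c}(\sll_{2|1}) \arrow[r,"\Phi^{\kk_c}"] \arrow[d,two heads] & \W^{\kk_c}(\sll_{2|1})\otimes \mathcal{A}_\phi\otimes \Pi^{1/2}(0) \arrow[r,two heads] & \pi_J\otimes \mathcal{A}_\phi\otimes \Pi^{1/2}(0) \arrow[d,hook,"\Psi"] \\
L_{\kk_c}(\sll_{2|1}) \arrow[r,"g"] & \mathcal{A}_a^{\otimes 2}\otimes \mathcal{A}_\phi \arrow[r,symbol=\subset] & \Pi^{1/2}(0)^{\otimes 2}\otimes \mathcal{A}_\phi
\end{tikzcd}
\]
commute. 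Since $\Psi$ is injective and the lower row is injective with kernel exactly $I$ on the left, the top composite has kernel exactly $I$ as well. Both the factoring through $L_{\kk_c}(\sll_{2|1})$ \emph{and} the injectivity fall out at once; no separate ideal-contraction or almost-simplicity argument is needed. In short, the comparison with \cite{KW01} is the proof, not a consistency check.
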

\proof
Let $\mathcal{A}_a$ denote the $\beta\gamma$-system vertex algebra, strongly generated by $a,a^*$ which satisfy the $\lambda$-brackets 
$$\lm{a}{a^*}=1,\quad \lm{a}{a}=\lm{a^*}{a^*}=0.$$
By \cite{KW01}, there is an embedding
\begin{align*}
   g\colon L_{\kk_c}(\sll_{2|1})\hookrightarrow \mathcal{A}_a^{\otimes2}\otimes \mathcal{A}_\phi.
\end{align*}
such that 
\begin{align*}
& h_1\mapsto -a_1^*a_1-a_2^*a_2,\quad h_2 \mapsto a_2^*a_2+\phi\phi^*,\\
&e_{1,2}\mapsto a_1a_2,\quad e_{1,3}\mapsto a_1\phi^*,\quad e_{3,2}\mapsto a_2\phi,\\
&e_{2,1}\mapsto -a^*_1a^*_2,\quad e_{3,1}\mapsto -a^*_1\phi,\quad e_{2,3}\mapsto a^*_2\phi^*.
\end{align*}
By combining it with the Friedan--Martinec--Shenker bosonization 
\begin{align*}
   \mathcal{A}_a \hookrightarrow \Pi(0),\quad a\mapsto \hwt{c},\ a^* \mapsto -u \hwt{-c},
\end{align*}
we obtain the following commutative diagram:
   \begin{center}
		\begin{tikzcd}
			V^{\kk_c}(\sll_{2|1})
			\arrow[d, two heads]
			\arrow[r, "\Phi^{\kk_c}"]&
			\W^{\kk_c}(\sll_{2|1})\otimes \mathcal{A}_\phi \otimes \Pi^\frac{1}{2}(0) \arrow[r, two heads] & \pi_J \otimes \mathcal{A}_\phi \otimes \Pi^\frac{1}{2}(0)
                \arrow[d,hook, "\Psi"]
			\\
		     L^{\kk_c}(\sll_{2|1}) \arrow[r, "g"]&
			  \mathcal{A}_a^{\otimes2}\otimes \mathcal{A}_\phi \arrow[r,symbol=\subset]&
			\Pi^{\frac{1}{2}}(0)^{\otimes2}\otimes \mathcal{A}_\phi.
		\end{tikzcd}
	\end{center}
Here we have used $\W_{\kk_c}(\sll_{2|1})\simeq \pi_J$ (Corollary \ref{simple quotient of W}) and $\Psi$ is given by
 \begin{align*}
 \begin{array}{lll}
     J\mapsto \tfrac{1}{2}(d_1-d_2)+x,& \phi\mapsto \hwt{\frac{1}{2}(c_1-c_2)}\phi^*,& \phi^*\mapsto \hwt{-\frac{1}{2}(c_1-c_2)}\phi,\\
     c\mapsto c_1+c_2,& \hwt{\frac{1}{2}pc}\mapsto \hwt{\frac{1}{2}p(c_1+c_2)},& d\mapsto \tfrac{1}{2}(d_1+d_2).
    \end{array}
 \end{align*}
Hence, $\Phi^{\kk_c}$ factors through the simple quotient as desired.
\endproof

\end{document}